\documentclass{amsart}
\usepackage[utf8]{inputenc}

\usepackage{tikz}
\usetikzlibrary{cd}

\usepackage{amsmath,amssymb,amsfonts,mathrsfs,mathtools, braket}
\usepackage{enumerate}
\newtheorem{theorem}{Theorem}[section]
\newtheorem{lemma}[theorem]{Lemma}
\newtheorem{corollary}[theorem]{Corollary}

\newtheorem{definition}[theorem]{Definition}
\newtheorem{remark}[theorem]{Remark}


%
%
%
%
\numberwithin{equation}{section}

\DeclarePairedDelimiter\abs{\lvert}{\rvert}

\newcommand{\R}{\mathbb{R}}
\newcommand{\C}{\mathbb{C}}
\newcommand{\N}{\mathbb{N}}

\newcommand{\calO}{\mathcal{O}}
\newcommand{\calP}{\mathcal{P}}
\newcommand{\calQ}{\mathcal{Q}}

\newcommand{\Q}{\mathcal{Q}}

\newcommand{\id}{\mathrm{id}}

\newcommand{\GL}{\mathrm{GL}}
\newcommand{\dd}{\mathrm{d}}

\newcommand{\inv}{{-1}}
\newcommand{\invT}{{-T}}

\title{Dilational Symmetries of Decomposition and Coorbit Spaces}
\author{Hartmut F\"uhr, Reihaneh Raisi-Tousi}

\address{H.~F{\"u}hr\\
Lehrstuhl f{\"u}r  Geometrie und Analysis \\
RWTH Aachen University \\
 D-52056 Aachen\\
 Germany \\ email: fuehr@mathga.rwth-aachen.de}

 \address{R. Raisi-Tousi\\
 Department of Pure Mathematics\\ Ferdowsi University of Mashhad \\ P.O. Box 1159-91775, Mashhad \\ Iran \\ 
Hochschule Neubrandenburg \\ University of Applied Sciences \\
Institute of Applied Mathematics and Informatics in Science and Technology\\
Brodaer Stra\ss{}e 2, 17033 Neubrandenburg \\ Germany \\ 
 email: raisi@um.ac.ir; }
 
\date{\today}
\keywords{coorbit spaces; decomposition spaces; coarse geometry; symmetry groups}
\subjclass{42C15 (42C40 43A15 43A65 46C15 51F30)}

\begin{document}

\begin{abstract}
    We investigate  the invariance properties of general wavelet coorbit spaces and Besov-type decomposition spaces under dilations by matrices. We show that these matrices can be characterized by quasi-isometry properties with respect to a certain metric in frequency domain. We formulate versions of this phenomenon both for the decomposition and coorbit space settings. 
    
    We then apply the general results to a particular class of dilation groups, the so-called shearlet dilation groups. We present a general, algebraic characterization of matrices the are coorbit compatible with a given shearlet dilation group. We determine the groups of compatible dilations for a variety of concrete examples. 
\end{abstract}

\maketitle

\section{Introduction}

Coorbit spaces formalize a general idea underlying wavelet approximation theory, namely that smoothness properties are closely related to decay properties of the coefficients with respect to suitable families of elementary building blocks, see \cite{FeiGroeAUniefApprToIntegrGroupRepAndTheirAtomicDec,FeiGroeBanSpaRelToIntGrpRepI,FeiGroeBanSpaRelToIntGrpRepII}. It has been understood in the meantime that this formalism can be applied to a large variety of different groups, thus allowing to efficiently capture the different approximation theoretic properties of the various systems of building blocks. In particular, generalized wavelet transforms, based on building blocks arising from a combination of translations and dilations (the latter taken from a suitably chosen matrix groups) acting on a suitably chosen single vector, provide a rich source of diverse examples. The references  \cite{DahlkeShearletCoorbitSpacesCompactlySupported,DahlkeShearletCoorbitSpacesAssociatedToFrames,FuehCooSpaAndWavCoeDecOveGenDilGro,FuRT,FePa,fischer2018heisenbergmodulation} provide a small sample of the literature; a more comprehensive list of the existing constructions can be found in the introduction of \cite{FuKo_Coarse}.

While these results illustrate the scope of the initial definitions and results of coorbit space theory, being applicable to a large variety of fundamentally different dilation groups, they also point to the necessity to develop ways to understand the similarities and difference of the various groups. For example, the problem of understanding when two different dilation groups (corresponding to two different ways of constructing a wavelet system from a single mother wavelet) have the same approximation theoretic behaviour, as coded by the associated coorbit spaces, has been clarified only fairly recently \cite{VoigtlaenderEmbeddingsOfDecompositionSpaces,FuKo_Coarse}.

An alternative approach to understanding the various coorbit spaces consists in comparing them to more classical, isotropic smoothness spaces. The recent paper \cite{FuKo_Embedding} presents a particular case of such an investigation, elucidating the embedding behaviour of general coorbit spaces into isotropic Sobolev spaces. The more recent paper \cite{FuBa} performs a similar task for anisotropic Besov spaces, which can also be viewed as (generalized) coorbit spaces \cite{fuehrvelthoven2020coorbit}. A motivating factor for undertaking these investigations was understanding the ways in which the choices entering the construction of generalized wavelet system, in particular the choice of dilation group, influence the approximation-theoretic properties of the associated wavelet systems. 

In this paper, we contribute to the understanding of wavelet coorbit spaces by studying their \textit{symmetries}, more specifically those that arise from a simple dilation $f \mapsto f \circ A$, for an invertible matrix $A$. We will derive sharp criteria for the matrices $A$ that allow to extend this dilation operator from $L^2(\mathbb{R}^d)$ to general Besov-type coorbit spaces, using coarse geometric methods. 

For a case study illustrating the applicability and scope of our general results, we will then consider the class of shearlet dilation groups in arbitrary dimensions. This class is particularly suited for such a case study: On the one hand, it is increasingly rich as the dimension $d$ grows. On the other hand, the structural features of shearlet dilation groups give rise to a rather close interplay between coorbit theory on one end, and the algebraic structures underlying the construction of shearlet dilation groups on the other. 

\subsection{Overview of the paper}

The remainder of the paper is structured as follows: Section 2 contains our general results on symmetries of decomposition and coorbit spaces. The fundamentals concerning decomposition spaces and their matrix symmetries are the subject of Subsection 2.1. As a novelty, we introduce the group $\mathcal{S}_{\mathcal{P}}$ of  \textit{decomposition space compatible matrices}, associated to an admissible covering $\mathcal{P}$, in Definition \ref{defn:dec_compatible}. This matrix group can be understood as a symmetry group of the decomposition spaces associated to the covering. 

The chief purpose of this paper is the characterization of decomposition space compatible matrices, for various classes of admissible coverings. Our main general result in this respect is Theorem \ref{thm:char_sym_group}, which provides a coarse geometric characterization, in a spirit similar to \cite{FuKo_Coarse}. Subsection 2.2 then turns to generalized wavelets and their associated coorbit spaces. We introduce the group $\mathcal{S}_{Co_H}$ of \textit{coorbit compatible matrices}, a coorbit space analog to $\mathcal{S}_{\mathcal{P}}$, in Definition \ref{defn:coorbit_compatible}. Using the decomposition space description of coorbit spaces (established in \cite{FuehrVoigtlWavCooSpaViewAsDecSpa}) allows to prove a general characterization of coorbit compatible matrices in Theorem \ref{thm:main_char_compatible_coorbit}, and clarifies that coorbit compatibility is indeed a special case of decomposition space compatibility. 

The remainder of the paper serves to illustrate the scope of the general theorems, in particular of Theorem \ref{thm:main_char_compatible_coorbit}. Sections 3 and 4 focus on generalized shearlet groups. Section 3 recalls the main ingredients that enter the construction of such groups. The algebraic structures underlying generalized shearlet dilation groups allow to translate the coarse geometric conditions from the previous section to rather stringent algebraic relations; these results are summarized in Corollary \ref{cor:summary_shearlet}. 

The final section contains explicit computations of the groups of coorbit compatible matrices for a variety of examples, namely for all admissible dilation groups in dimension 2, as well as the standard and Toeplitz shearlet dilation groups in arbitrary dimensions. The examples underscore the variety of behaviours that the coorbit spaces of different dilation groups can exhibit, e.g. as reflected as reflected in the varying dimensions. In addition, the relative ease with which these symmetry groups are computed serves to highlight the usefulness of Theorem \ref{thm:main_char_compatible_coorbit} and Corollary \ref{cor:summary_shearlet}.

\section{Decomposition and coorbit spaces: Relevant definitions and results}

In this section we discuss decomposition spaces and wavelet coorbit spaces, and the matrices that leave these spaces invariant. We first consider (and solve) the more general problem of understanding dilation invariance of decomposition spaces, before specializing on wavelet coorbit spaces. 

\subsection{Decomposition spaces}

Decomposition spaces were conceived by Feichtinger and Gr\"obner in \cite{FeichtingerGroebnerBanachSpacesOfDistributions}, and later revisited (and somewhat updated) by Borup and Nielsen \cite{BorupNielsenFrameDecompositionOfDecOfSpaces} and Voigtlaender \cite{Voigtlaender2015PHD}. In the following, we mostly rely on the last source, and focus on decomposition spaces of Besov type, associated to weighted mixed $L^p$ norms. 

The starting point for the definition of these spaces is the notion of an {\em admissible covering} $\mathcal{Q}=(Q_i)_{i\in I}$ of some open set $\mathcal{O} \subset \mathbb{R}^d$ (\cite{FeichtingerGroebnerBanachSpacesOfDistributions}), i.e. a family of nonempty sets $Q_i \subset \mathbb{R}^d$ such that
    \begin{enumerate}[i)]
        \item $\bigcup_{i\in I} Q_i = \mathcal{O}$ and
        \item $\sup_{i\in I} \sharp \{j\in I: Q_i \cap Q_j \neq \emptyset\}<\infty$.
    \end{enumerate} 
  Throughout this paper, we will concentrate on the class of  {\em (tight) structured admissible covering}, see
  Definition 2.5 of \cite{VoigtlaenderEmbeddingsOfDecompositionSpaces}. This means that
  $Q_i = T_i Q + b_i$ with $T_i\in \mathrm{GL}(\mathbb{R}^d)$, $b_i\in \mathbb{R}^d$ with an open, precompact set $Q$, and the involved matrices fulfill 
  \begin{equation} \label{eqn:str_cov_norm}
   \sup_{i,j \in I : Q_i \cap Q_j \not= \emptyset} \| T_i^{-1} T_j \| < \infty. 
  \end{equation}

    The next ingredient is a special partition of unity $\Phi=(\varphi_i)_{i\in I}$ subordinate to $\mathcal{Q}$, also called $\mathrm{L}^p$-BAPU (bounded admissible partition of unity), with the following properties
    \begin{enumerate}[i)]
        \item $\varphi_i \in C_c^\infty(\mathcal{O})\quad \forall i\in I$,
        \item $\sum_{i\in I} \varphi_i(x)=1 \quad \forall x\in \mathcal{O}$,
        \item $\varphi_i(x)=0$ for $x\in \mathbb{R}^d\setminus Q_i$ and $i\in I$,
        \item if $1\leq p \leq \infty$: $\sup_{i\in I}\Vert \mathcal{F}^{-1} \varphi_i\Vert_{\mathrm{L}^1}<\infty$,\\
                if $0<p<1$:\quad $\sup_{i\in I}|\det(T_i)|^{\frac{1}{p}-1}\Vert \mathcal{F}^{-1} \varphi_i\Vert_{\mathrm{L}^p}<\infty$.
    \end{enumerate}
    Here, $\mathcal{F}$ denotes the usual Fourier transform of a function in $\mathrm{L^2}(\R^d)$ defined by
    \begin{align*}
    \mathcal{F}f(\xi):= \int_{\mathbb{R}^d} f(x)e^{-2\pi i\langle x,\xi \rangle}\mathrm{d}x
    \end{align*}
    for $\xi \in \R^d$. We also use the notation $\widehat{f}:=\mathcal{F}(f)$.
    The final ingredient is a weight  $(u_i)_{i\in I}$ such that there exists $C>0$ with $u_i \leq C u_j$ for all $i,j \in I: Q_i \cap Q_j \neq \emptyset$. A weight with this property is also called {\em $\mathcal{Q}$-moderate}. The interpretation of this property is that the value of $(u_i)_{i\in I}$ is comparable for indices corresponding to sets which are "close" to each other. Finally, we define the \textit{(Fourier-side) decomposition space with respect to the covering $\mathcal{Q}$ and the weight $(u_i)_{i \in I}$ with integrability exponents $0<p,q\leq \infty$} as
    \begin{align}
        \mathcal{D}(\mathcal{Q}, \mathrm{L}^p, \ell^q_u):=\{f\in \mathcal{D}'(\mathcal{O}): \Vert f\Vert_{\mathcal{D}(\mathcal{Q}, \mathrm{L}^p, \ell^q_u)}< \infty\}
    \end{align}
    for 
    \begin{align}
        \Vert f\Vert_{\mathcal{D}(\mathcal{Q}, \mathrm{L}^p, \ell^q_u)}:=
        \left\Vert\left(u_i \cdot \Vert \mathcal{F}^{-1}(\varphi_i f) \Vert_{\mathrm{L}^p(\mathbb{R}^d)} \right)_{i\in I}\right\Vert_{\ell^q(I)}.
    \end{align}
    As the notation suggests, the decomposition spaces are independent of the precise choice of $\Phi$; see  \cite{Voigtlaender2015PHD} Corollary 3.4.11. 

A crucial concept for the analysis of decomposition coverings is the definition of the set of neighbors in a covering.

\begin{definition}[\cite{FeichtingerGroebnerBanachSpacesOfDistributions} Definition 2.3]\label{def:SetOfNeighbors}
For a covering $\Q = (Q_i)_{i\in I}$ of $\mathcal{O}$ with $Q_i\subset \calO$ for all $i\in I$, we define the \textit{set of neighbors of a subset $J\subset I$} as
\begin{align*}
J^*:=\Set{i\in I| \exists j\in J:\ Q_i \cap Q_j \neq \emptyset}.
\end{align*}
By induction, we set $J^{0*}:= J$ and $J^{(n+1)*} = \left(J^{n*}\right)^*$ for $n\in \N_0$. Moreover, we use the shorthand notations $i^{k*}:=\Set{i}^{k*}$ and define $Q_i^{k*}:=\bigcup_{j\in i^{k*}}Q_{j}$
for $i\in I$ and $k\in \N_0$. 
\end{definition} 

Recall that the overall aim of this paper is to clarify dilation invariance properties of various function spaces. The following definition formalizes these properties for the setting of decomposition spaces.

\begin{definition} \label{defn:dec_compatible}
Let $\mathcal{P}=(P_j)_{j\in J}$ be a structured admissible covering of the open set $\mathcal{O}\subset\R^{d}$. Given an invertible matrix $A \in {\rm GL}(d,\mathbb{R})$, we call $A$ \textbf{(decomposition space) compatible with $\mathcal{P}$} if for all $0<p,q< \infty$ and all $\mathcal{P}$-moderate weights $v$, one has 
\[
\forall f \in C_c^\infty(\mathcal{O} \cap A \mathcal{O})~:~ \| f \circ A^{-1} \|_{D(\mathcal{Q},L^{p},\ell^{q}_{v})} \asymp  \| f \|_{D(\mathcal{Q},L^{p},\ell^{q}_{v})} 
\]
We let
\[
\mathcal{S}_{\mathcal{P}} = \{ A  \in {\rm GL}(d,\mathbb{R}) : A
{\color{red}
\mbox{ is  compatible  with }} \mathcal{P} \}
\]
\end{definition}

The following observation is immediate from the definition. 
\begin{remark} \label{rem:SP_subgroup}
$\calO\subset \R^d$ be open and let $\calQ = (Q_i)_{i\in I}$ be an admissible covering. Then $\mathcal{S}_\mathcal{P} \subset {\rm GL}(d,\mathbb{R})$ is a subgroup. In particular, $A \in \mathcal{S}_{\mathcal{P}}$ iff $A^{-1} \in \mathcal{S}_{\mathcal{P}} $.

It is currently open whether $\mathcal{S}_{\mathcal{P}}$ is generally closed.
\end{remark}

The results and techniques from \cite{Voigtlaender2015PHD} suggest that an understanding of $\mathcal{S}_{\mathcal{P}}$ hinges on the comparison of different coverings, and the next definitions provide the pertinent vocabulary for such a comparison.  While our exposition follows \cite{Voigtlaender2015PHD}, most of the definitions can be traced back to \cite{FeichtingerGroebnerBanachSpacesOfDistributions}.

\begin{definition}[\cite{Voigtlaender2015PHD} Definition 3.3.1.]\label{def:NeighborsAndEquivalence}
Let $\Q = (Q_i)_{i\in I}$ and $\mathcal{P} = (P_j)_{j\in J}$ be families of subsets of $\R^d$.
\begin{enumerate}[i)]

\item We define the \textit{set of $\mathcal{P}$-neighbors of} $i\in I $ by $J_i:= \Set{j\in J | Q_i\cap P_j \neq \emptyset}.$
More generally, we call $J_i$ and $I_j$ \textit{intersection sets} for the coverings $\Q$ and $\calP$.

\item We call $\Q$ \textit{weakly subordinate} to $\mathcal{P}$ if $N(\Q, \mathcal{P}):=\sup_{i\in I}\abs{J_i} < \infty$.

The quantity $N(\mathcal{P}, \Q)$ is defined analogously, and we call $\Q$ and $\mathcal{P}$ \textit{weakly equivalent} if $N(\mathcal{P}, \Q)<\infty$ and $N(\Q, \mathcal{P})<\infty$.

\item We call $\Q$ \textit{almost subordinate} to $\mathcal{P}$ if there exists $k\in \N_0$ such that for every $i\in I$ there exists an $j_i\in J$ with $Q_i\subset P_{j_i}^{k*}$. If $k=0$ is a valid choice, then we call $\Q$ \textit{subordinate} to $\calP$.
\end{enumerate}
\end{definition}

The relevance of these notions, in particular of weak equivalence, is spelled out in the next two lemmas. Note that the formulation of the next lemma is a special case of the cited result. 

\begin{lemma}[\cite{VoigtlaenderEmbeddingsOfDecompositionSpaces}, Theorem 6.9]\label{cor:WeakEquivalenceOfInducedCoverings}
Let $\Q=(Q_i)_{i\in I}, \mathcal{P}=(P_j)_{j\in J}$ be two structured admissible coverings of the open set $\mathcal{O}\subset\R^{d}$. If $\Q$ and $\mathcal{P}$ are not weakly equivalent, then
\[
\mathcal{D}(\Q, L^{p}, \ell^{q}_{u_1})\neq \mathcal{D}(\mathcal{P}, L^{p}, \ell^{q}_{u_2})
\]
for all $\Q$-moderate weights $u_1:I\to (0,\infty)$, for all $\mathcal{P}$-moderate weights $u_2:J\to (0,\infty)$ and all $p, q \in (0,\infty]$ with $(p,q) \neq (2,2)$.
\end{lemma}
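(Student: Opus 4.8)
The plan is to establish the contrapositive in a strong quantitative form. Equality of the two (quasi-Banach) decomposition spaces forces, by the closed graph theorem, equivalence of the two quasi-norms, hence continuous embeddings in both directions. Since the hypothesis that $\Q$ and $\mathcal{P}$ are not weakly equivalent means $N(\Q,\mathcal{P})=\sup_i|J_i|=\infty$ or $N(\mathcal{P},\Q)=\sup_j|I_j|=\infty$, and since both the hypothesis and the conclusion $\mathcal{D}(\Q,L^p,\ell^q_{u_1})\neq\mathcal{D}(\mathcal{P},L^p,\ell^q_{u_2})$ are symmetric under interchanging $(\Q,u_1)\leftrightarrow(\mathcal{P},u_2)$, I may assume $\sup_i|J_i|=\infty$ and fix a sequence $(i_n)$ with $|J_{i_n}|\to\infty$. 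A first, purely geometric reduction is useful: because each covering is structured admissible it has bounded internal overlap, so a \emph{single} $Q_{i_n}$ can meet an unbounded number of the $P_j$ only through a scale mismatch, namely a comparatively large $Q_{i_n}$ overlapping a growing family of much smaller cells $P_j$, $j\in J_{i_n}$, with $\sum_{j\in J_{i_n}}|\det T_j|\lesssim|\det T_{i_n}|$. This is exactly the configuration the test functions will exploit. I would also record at the outset why $(p,q)=(2,2)$ must be excluded: there Plancherel together with bounded overlap identifies $\mathcal{D}(\Q,L^2,\ell^2_{u})$ with a weighted $L^2$ space that is insensitive to the covering, so no covering-based obstruction can survive.

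The distinguishing elements are bumps transported onto the cells $Q_{i_n}$. Writing $Q_i=T_iQ+b_i$, I fix a model $\gamma\in C_c^\infty$ adapted to $Q$ and set $f_n(\xi)=\gamma(T_{i_n}^{-1}(\xi-b_{i_n}))$. Its $\Q$-norm is carried by a single index (bounded overlap plus $\Q$-moderateness of $u_1$), giving $\|f_n\|_{\mathcal{D}(\Q)}\asymp u_1(i_n)|\det T_{i_n}|^{1-1/p}$, while its $\mathcal{P}$-norm aggregates the $|J_{i_n}|$ pieces $\psi_j f_n$, each living at the smaller scale $|\det T_j|^{1-1/p}$. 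Comparing the two, the flat bump already separates the norms whenever $1/p+1/q\neq1$: for uniform sizes and constant weights the ratio behaves like $|J_{i_n}|^{1/p+1/q-1}$, which tends to $0$ or $\infty$, contradicting norm equivalence. On the critical line $1/p+1/q=1$ the flat bump is balanced, and one must instead use the non-Hilbertian geometry of $L^p$: I would test with sign-randomized combinations $\sum_{j\in J_{i_n}}\varepsilon_j\,\psi_j f_n$, whose $\mathcal{P}$-norm is insensitive to the signs (the pieces sit in distinct cells) but whose $\Q$-norm, an $L^p$-norm of a random superposition, is separated from the coherent one by Khintchine-type estimates as soon as $p\neq2$. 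At $p=q=2$ both mechanisms collapse at once — the line condition holds and Plancherel removes all sign sensitivity — which is precisely the degeneracy recorded above.

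The main obstacle, and the technical heart, is that the interaction norms $\|\mathcal{F}^{-1}(\psi_j f)\|_p$ are governed by the fine local geometry of the overlaps $Q_i\cap P_j$ and by the relative scales, rather than by any clean product of per-cell quantities; consequently $|J_{i_n}|$ and the weights do not decouple, and the single-bump computation above is only conclusive in the prototypical nested configuration and for convenient weights. Since the statement must hold for \emph{all} moderate weights $u_1,u_2$ and across the whole range $(p,q)\neq(2,2)$, I would not attempt a bare-hands argument but rather invoke the quantitative embedding characterization of \cite{VoigtlaenderEmbeddingsOfDecompositionSpaces}: each of the two required continuous embeddings is equivalent to an explicit finiteness condition for a weighted sequence operator indexed by the intersection sets $J_i$ and $I_j$, with the correct $L^p$--$\ell^q$ exponents. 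The endgame is then to show that these two conditions are mutually incompatible once $\sup_i|J_i|=\infty$, the only loophole being the self-dual, orthogonal point $(p,q)=(2,2)$ where both degenerate to the covering-independent $L^2$ estimate. Carrying out this incompatibility — tracking how the unbounded overlap forces a divergent cardinality factor that the weights cannot absorb while simultaneously meeting the reverse condition — is where the real work lies.
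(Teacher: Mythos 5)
This lemma is not proved in the paper at all: it is imported (in a specialized formulation) from the cited reference, Theorem 6.9 of \cite{VoigtlaenderEmbeddingsOfDecompositionSpaces}, so there is no internal argument to compare yours against; the question is whether your sketch stands alone as a proof, and it does not. The first concrete gap is your geometric reduction: from $|J_{i_n}|\to\infty$ you infer a scale mismatch with $\sum_{j\in J_{i_n}}|\det T_j|\lesssim|\det T_{i_n}|$, but bounded overlap of $\mathcal{P}$ only controls $\sum_{j\in J_{i_n}}|P_j\cap Q_{i_n}|\lesssim |Q_{i_n}|$; the sets $P_j$ meeting $Q_{i_n}$ need not be small, nor contained in $Q_{i_n}$ --- the normalization \eqref{eqn:str_cov_norm} holds \emph{within} one structured covering, not across two different ones. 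For the same reason, the lower bound $\|\mathcal{F}^{-1}(\psi_j f_n)\|_{L^p}\gtrsim|\det T_j|^{1-1/p}$ that you need to make the ratio $|J_{i_n}|^{1/p+1/q-1}$ appear fails in general: $\psi_j f_n$ is supported in $P_j\cap Q_{i_n}$, which can be an arbitrarily thin sliver on which $\psi_j$ need not be bounded below. You acknowledge this yourself when you concede the computation is ``only conclusive in the prototypical nested configuration.''

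The second gap is decisive: the argument is never completed. After the heuristic computation you propose to invoke the quantitative embedding machinery of the very paper this lemma is quoted from, and you state that proving the two embedding conditions incompatible when $\sup_i|J_i|=\infty$ ``is where the real work lies.'' That incompatibility --- uniformly over \emph{all} moderate weights $u_1,u_2$ and all $(p,q)\neq(2,2)$, including the critical line $1/p+1/q=1$, where your Khintchine/randomization idea is named but not executed --- is precisely the content of the theorem. The ingredients you list (closed graph theorem to upgrade set equality to norm equivalence, bumps transported onto the cells $Q_{i_n}$, random signs with Khintchine off the Hilbertian exponent) are indeed the right tools, and they correspond to those used in Voigtlaender's actual proof; but naming them, deferring the core step to the cited result, and declaring the remainder open yields a plan, not a proof.
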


The exception $(p,q)\neq (2,2)$ is necessary to exclude trivial cases: In the case of $(p,q) = (2,2)$ the associated decomposition spaces are just weighted $\mathrm{L}^2$ spaces, by the Plancherel Theorem. In particular, if $\mathcal{O} \subset \mathbb{R}^d$ is open and of full measure, and the weight $v$ is constant, one finds that $\mathcal{D}(\mathcal{P},L^2,\ell^2_v) = L^2(\mathbb{R}^d)$, for all admissible coverings $\mathcal{P}$. 

Weak subordinateness and equivalence of coverings are important assumptions for a multitude of sufficient criteria for embeddings of decomposition spaces and their equality, as developped in \cite{VoigtlaenderEmbeddingsOfDecompositionSpaces}. The following statement is \cite[Lemma 6.11]{VoigtlaenderEmbeddingsOfDecompositionSpaces}. The statement about the range $0 \le p,q \le \infty$ is justified by the remark following the cited lemma.   
\begin{lemma} \label{lem:weak_equiv_suff}
Let $1\leq p,q\leq \infty$ and let $\emptyset\neq\mathcal{O}\subset \R^d$ be open. Further, let $\Q=(Q_i)_{i\in I}, \mathcal{P}=(P_j)_{j\in J}$ be two tight structured admissible coverings of $\mathcal{O}$, and let $u_1$ be a $\mathcal{Q}$-moderate weight and $u_2$ a $\mathcal{P}$-moderate weight.

If $\mathcal{Q}$ and $\mathcal{P}$ are weakly equivalent and there exists $C>0$ such that $C^{-1}u_1(i) \leq u_2(j)\leq Cu_1(i)$ for all $i\in I$ and $j\in J$ with $Q_i\cap P_j \neq \emptyset$, then 
\begin{align*}
\mathcal{D}(\Q, L^{p}, \ell^{q}_{u_1})= \mathcal{D}(\mathcal{P}, L^{p}, \ell^{q}_{u_2})
\end{align*}
with equivalent norms.

If all sets in the coverings are connected, the conclusion holds for the range 
$0 \le p,q \le \infty$. 
\end{lemma}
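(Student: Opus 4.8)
The plan is to deduce the equality of spaces from the equivalence of the two quasi-norms on the common reservoir $\mathcal{D}'(\mathcal{O})$. Since the hypotheses are symmetric in $\mathcal{Q}$ and $\mathcal{P}$ --- weak equivalence means both $N(\mathcal{Q},\mathcal{P})$ and $N(\mathcal{P},\mathcal{Q})$ are finite, and the two-sided weight bound treats $u_1,u_2$ symmetrically --- it suffices to establish a single inequality, say $\Vert f\Vert_{\mathcal{D}(\mathcal{P},L^p,\ell^q_{u_2})}\lesssim \Vert f\Vert_{\mathcal{D}(\mathcal{Q},L^p,\ell^q_{u_1})}$, and then interchange the roles of the two coverings. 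Fix $L^p$-BAPUs $(\varphi_i)_{i\in I}$ subordinate to $\mathcal{Q}$ and $(\psi_j)_{j\in J}$ subordinate to $\mathcal{P}$; by the independence of the decomposition quasi-norm from the choice of BAPU (\cite{Voigtlaender2015PHD}, Corollary 3.4.11) this is harmless.

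The heart of the matter is a single localized estimate. From $\sum_{i\in I}\varphi_i\equiv 1$ on $\mathcal{O}$ and $\operatorname{supp}\psi_j\subset P_j$ one gets $\psi_j f=\sum_{i\in I_j}\psi_j\varphi_i f$, where the intersection set $I_j=\{i:Q_i\cap P_j\neq\emptyset\}$ satisfies $\abs{I_j}\le N(\mathcal{P},\mathcal{Q})<\infty$. Applying $\mathcal{F}^{-1}$ turns each product into a convolution $\mathcal{F}^{-1}\psi_j*\mathcal{F}^{-1}(\varphi_i f)$; for $1\le p\le\infty$ Young's inequality bounds its $L^p$-norm by $\Vert\mathcal{F}^{-1}\psi_j\Vert_{L^1}\,\Vert\mathcal{F}^{-1}(\varphi_i f)\Vert_{L^p}$, and the prefactor is uniformly bounded by BAPU property iv). Combining this with the weight comparison $u_2(j)\asymp u_1(i)$ --- valid precisely because $i\in I_j$ forces $Q_i\cap P_j\neq\emptyset$ --- yields a pointwise bound $u_2(j)\Vert\mathcal{F}^{-1}(\psi_j f)\Vert_{L^p}\lesssim\sum_{i\in I_j}u_1(i)\Vert\mathcal{F}^{-1}(\varphi_i f)\Vert_{L^p}$.

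It remains to take $\ell^q(J)$ quasi-norms. The right-hand side is the image of the sequence $(u_1(i)\Vert\mathcal{F}^{-1}(\varphi_i f)\Vert_{L^p})_{i\in I}$ under the summation operator $a\mapsto(\sum_{i\in I_j}a_i)_{j\in J}$, which I claim is bounded $\ell^q(I)\to\ell^q(J)$ by a Schur-type estimate: each output coordinate sums at most $N(\mathcal{P},\mathcal{Q})$ inputs, while each input index $i$ contributes to at most $\abs{J_i}\le N(\mathcal{Q},\mathcal{P})$ output coordinates, so both the row and the column counts of the incidence matrix are uniformly bounded. This is exactly the point at which weak equivalence is used, and the estimate is valid for all $0<q\le\infty$ (via the $q$-triangle inequality when $q<1$). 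This completes the argument for $1\le p,q\le\infty$.

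For the extension to $0<p<1$ under the connectedness hypothesis, the scheme is identical except that Young's inequality is unavailable. In its place one uses the band-limited convolution estimate for $L^p$, $p<1$: since $\mathcal{F}^{-1}(\psi_j\varphi_i f)$ has Fourier support in $P_j\cap Q_i$, a Nikolskii/Bernstein-type bound produces a factor $\abs{\det T_i}^{1/p-1}$, which is precisely the quantity that the $p<1$ normalization in BAPU property iv) is designed to absorb. I expect this to be the main obstacle, and it is here that connectedness enters. Combined with weak subordinateness, connectedness of $Q_i$ forces the finitely many open sets $P_j$ meeting $Q_i$ to have a connected overlap pattern (else one could disconnect $Q_i$), so that $Q_i\subset P_{j_0}^{k*}$ for some $j_0$ and a uniformly bounded $k\le N(\mathcal{Q},\mathcal{P})-1$; this upgrades weak equivalence to mutual almost subordinateness, which keeps the support measures, and hence the constants in the convolution estimate, uniform in $i$. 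Once both inequalities are in hand across the stated range, the two quasi-norms are equivalent and the spaces coincide.
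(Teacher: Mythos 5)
The paper itself does not prove this lemma: it is imported from the literature (Lemma 6.11 of the cited Voigtlaender embedding paper, with the quasi-Banach extension justified by the remark following that lemma). So your proposal cannot be measured against "the paper's proof"; rather, it is a from-scratch reconstruction of the argument behind the cited result. In the Banach range $1\le p,q\le\infty$ your reconstruction is correct and essentially complete: the decomposition $\psi_j f=\sum_{i\in I_j}\psi_j\varphi_i f$, Young's inequality combined with the uniform bound $\sup_j\Vert\mathcal{F}^{-1}\psi_j\Vert_{L^1}<\infty$ from BAPU property iv), the weight comparison on intersecting sets, and the Schur-type bound for $a\mapsto\bigl(\sum_{i\in I_j}a_i\bigr)_{j\in J}$ --- which indeed uses both $N(\mathcal{P},\mathcal{Q})$ (row size, via H\"older or the $q$-triangle inequality) and $N(\mathcal{Q},\mathcal{P})$ (column multiplicity, via double counting) --- is exactly the standard Feichtinger--Gr\"obner/Voigtlaender mechanism, and your symmetry reduction to a single inequality is legitimate since the hypotheses are symmetric in $(\mathcal{Q},u_1)$ and $(\mathcal{P},u_2)$.

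Your treatment of $0<p<1$ is, as you acknowledge, a sketch, but both key ingredients are the right ones: connectedness upgrades weak equivalence to mutual almost subordinateness (your disconnection argument is the correct one: the relatively open sets $P_j\cap Q_i$, $j\in J_i$, cover the connected set $Q_i$ and hence form a single chain-component, giving $Q_i\subset P_{j_0}^{k*}$ with $k\le N(\mathcal{Q},\mathcal{P})-1$), and the band-limited convolution inequality replaces Young. The one point you gloss over and would need to make explicit: the constant in that convolution estimate is governed by a single affine image $b_i+T_iK$ (with $K$ a fixed compact set) containing \emph{both} Fourier supports $P_j$ and $Q_i$, so it appears as $|\det T_i|^{1/p-1}$, whereas the $p<1$ BAPU normalization for $\mathcal{P}$ controls $|\det S_j|^{1/p-1}\Vert\mathcal{F}^{-1}\psi_j\Vert_{L^p}$, writing $P_j=S_jP'+c_j$. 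Absorbing the one into the other requires $|\det T_i|\asymp|\det S_j|$ whenever $Q_i\cap P_j\neq\emptyset$; this comparability is not part of weak equivalence itself, but it does follow from mutual almost subordinateness together with the tight structured covering condition \eqref{eqn:str_cov_norm} and moderateness. With that supplement, your argument becomes a complete proof of the statement the paper cites.
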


The following corollary summarizes the significance of weak equivalence for the subsequent discussion. Note that some implications hold under less stringent assumptions. Part (c) points out an important \textit{rigidity} property of decomposition spaces: Whenever two scales of decomposition spaces coincide in a single nontrivial case, they coincide everywhere. 
\begin{corollary} \label{cor:significance_weak_equivalence}
Let $\mathcal{P}= (P_j)_{j \in J},\mathcal{Q} = (Q_i)_{i \in I}$ denote two tight, structured admissible coverings of $\mathcal{O}$, each consisting of connected sets. Then the following are equivalent: 
\begin{enumerate}
    \item[(a)] $\mathcal{P}$ and $\mathcal{Q}$ are weakly equivalent. 
    \item[(b)] For all moderate weights $u_1$ on $I$ and $u_2$ on $J$ satisfying $C^{-1}u_1(i) \leq u_2(j)\leq Cu_1(i)$ whenever $Q_i\cap P_j \neq \emptyset$, with a global constant $C \ge 1$, and all $p,q \in (0,\infty]$, we have 
    \[
    \mathcal{D}(\Q, L^{p}, \ell^{q}_{u_1})= \mathcal{D}(\mathcal{P}, L^{p}, \ell^{q}_{u_2})~,
    \] with equivalent norms. 
    \item[(c)] There exist $p,q \in (0,\infty], (p,q) \not= (2,2)$ such that 
        \[
    \mathcal{D}(\Q, L^{p}, \ell^{q})= \mathcal{D}(\mathcal{P}, L^{p}, \ell^{q})~.
    \] 
\end{enumerate}

\end{corollary}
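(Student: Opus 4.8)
The plan is to establish the equivalence through the cyclic chain of implications (a) $\Rightarrow$ (b) $\Rightarrow$ (c) $\Rightarrow$ (a), with each step drawing directly on one of the two lemmas preceding the corollary. The substance of all three implications is essentially already packaged in those lemmas, so the work lies in matching up the weight hypotheses and the admissible ranges of the exponents $p,q$. For (a) $\Rightarrow$ (b), I would simply invoke Lemma \ref{lem:weak_equiv_suff}: fix any pair of moderate weights $u_1$ on $I$ and $u_2$ on $J$ satisfying $C^{-1} u_1(i) \le u_2(j) \le C u_1(i)$ whenever $Q_i \cap P_j \neq \emptyset$. Since by assumption $\mathcal{P}$ and $\mathcal{Q}$ are weakly equivalent, the hypotheses of Lemma \ref{lem:weak_equiv_suff} are met, and because all covering sets are connected, its conclusion holds for the full range $0 < p,q \le \infty$; this is precisely the equality of spaces with equivalent norms asserted in (b).

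For (b) $\Rightarrow$ (c), I would specialize (b) to the constant weights $u_1 \equiv 1$ and $u_2 \equiv 1$. These are trivially $\mathcal{Q}$- and $\mathcal{P}$-moderate and satisfy the compatibility condition with $C = 1$, so (b) yields $\mathcal{D}(\mathcal{Q}, L^p, \ell^q) = \mathcal{D}(\mathcal{P}, L^p, \ell^q)$ for every $p,q \in (0,\infty]$. Choosing any admissible pair with $(p,q) \neq (2,2)$, for instance $(p,q) = (1,1)$, produces the single nontrivial coincidence required by (c).

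Finally, (c) $\Rightarrow$ (a) is the contrapositive of Lemma \ref{cor:WeakEquivalenceOfInducedCoverings}: were $\mathcal{P}$ and $\mathcal{Q}$ not weakly equivalent, the two scales of decomposition spaces would differ for all moderate weights and all $(p,q) \neq (2,2)$, contradicting the equality furnished by (c) for the constant weight and the specific exponents. Hence weak equivalence must hold, closing the cycle.

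The argument is short precisely because the two lemmas do the heavy lifting, and I expect no genuine obstacle beyond careful bookkeeping. The two points that do require attention are, first, the use of the connectedness hypothesis to push Lemma \ref{lem:weak_equiv_suff} down to exponents below $1$ in the (a) $\Rightarrow$ (b) step, and second, the recognition that the conceptually crucial implication is the \emph{rigidity} step (c) $\Rightarrow$ (a), in which a single coincidence of spaces is promoted, via Lemma \ref{cor:WeakEquivalenceOfInducedCoverings}, to full weak equivalence.
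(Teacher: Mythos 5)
Your proof is correct and follows exactly the same route as the paper's: (a) $\Rightarrow$ (b) via Lemma \ref{lem:weak_equiv_suff} (with connectedness giving the full range $0 < p,q \le \infty$), (b) $\Rightarrow$ (c) by specializing to constant weights, and (c) $\Rightarrow$ (a) via the rigidity statement of Lemma \ref{cor:WeakEquivalenceOfInducedCoverings}. The only difference is that you spell out the bookkeeping that the paper leaves implicit.
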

\begin{proof}
The implication (a) $\Rightarrow$ (b) follows from Lemma \ref{lem:weak_equiv_suff}, (b) $\Rightarrow$ (c) is trivial, and $(c) \Rightarrow$ (a) is due to Lemma \ref{cor:WeakEquivalenceOfInducedCoverings}. 
\end{proof}

The next result clarifies the influence that the support set $\mathcal{O}$ has on the scale of decomposition spaces.

\begin{theorem} \label{thm:different_freq_supp}
 Let $\emptyset \not= \mathcal{O},\mathcal{O}' \subset \mathbb{R}^d$ open. Let $\mathcal{Q} = (Q_i)_{i \in I}$ denote an admissible covering of $\mathcal{O}$, $\mathcal{P} = (P_j)_{j \in J}$ denote an admissible covering of $\mathcal{O}'$. 
 Assume that either $\mathcal{O}' \cap \partial \mathcal{O} \not= \emptyset$ or  $\mathcal{O} \cap \partial \mathcal{O}' \not= \emptyset$ holds, and that $\mathcal{O} \cap \mathcal{O}'$ is unbounded. Let $p_1,p_2,q_1,q_2 \in (0,\infty]$. Then 
 \[
  \forall f \in C_c^\infty(\mathcal{O} \cap \mathcal{O}')~:~ \| f \|_{D(\mathcal{Q},L^{p_1},\ell^{q_1}_{v})} \asymp  \| f \|_{D(\mathcal{P},L^{p_2},\ell^{q_2}_{w})}
 \] can only hold in the trivial case, i.e., when $(p_1,q_1) = (2,2) = (p_2,q_2)$ and $v_i \asymp w_j$ whenever $Q_i \cap P_j \not= \emptyset$. 
\end{theorem}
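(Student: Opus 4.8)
The plan is to exploit the mismatch between the two coverings near a boundary point that is \emph{interior} to the other frequency support, and to use the unboundedness of $\mathcal{O}\cap\mathcal{O}'$ to pin down the second summability exponent. Throughout I assume the equivalence
\[
\| f \|_{D(\mathcal{Q},L^{p_1},\ell^{q_1}_{v})} \asymp \| f \|_{D(\mathcal{P},L^{p_2},\ell^{q_2}_{w})}, \qquad f \in C_c^\infty(\mathcal{O}\cap\mathcal{O}'),
\]
and aim to force $(p_1,q_1)=(p_2,q_2)=(2,2)$ together with $v_i\asymp w_j$ on overlaps. By symmetry I may assume $\mathcal{O}'\cap\partial\mathcal{O}\neq\emptyset$ and fix $\xi_0\in\partial\mathcal{O}\cap\mathcal{O}'$. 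Since $\mathcal{O}'$ is open, some ball $B=B(\xi_0,r)\subset\mathcal{O}'$, and because $\xi_0$ is an interior point of $\mathcal{O}'$, after shrinking $r$ only boundedly many cells of $\mathcal{P}$ meet $B$; in particular there is a cell $P_{j_0}$ whose interior contains a neighbourhood of $\xi_0$. Thus $\mathcal{P}$ is ``single scale'' near $\xi_0$, whereas $\mathcal{Q}$ is forced to resolve $\xi_0$ at arbitrarily fine scales: since $\xi_0\in\partial\mathcal{O}$ while every $Q_i\subset\mathcal{O}$, the cells of $\mathcal{Q}$ should accumulate at $\xi_0$ with $\mathrm{diam}(Q_{i_n})\to 0$, all eventually contained in $P_{j_0}$. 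In the notation of Definition \ref{def:NeighborsAndEquivalence} this gives $|I_{j_0}|=\infty$, so $\mathcal{Q}$ and $\mathcal{P}$ are \emph{not} weakly equivalent on $\mathcal{O}\cap\mathcal{O}'$.

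Next I would turn this fineness mismatch into separating test functions. Picking (Fourier-side) bumps $f_n$ with frequency support in the distinct cells $Q_{i_n}\subset P_{j_0}$, the functions $\mathcal{F}^{-1}f_n$ are pairwise frequency-disjoint. For a finite superposition $f=\sum_n c_n f_n$, the $\mathcal{Q}$-norm decouples across the cells $Q_{i_n}$, so it is $\asymp \big\|(c_n v_{i_n}\|\mathcal{F}^{-1}f_n\|_{L^{p_1}})_n\big\|_{\ell^{q_1}}$, whereas the $\mathcal{P}$-norm registers all $f_n$ through $P_{j_0}$ (and its $O(1)$ neighbours), hence is $\asymp w_{j_0}\big\|\sum_n c_n \mathcal{F}^{-1}f_n\big\|_{L^{p_2}}$. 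Equating these for \emph{all} coefficient sequences forces the $L^{p_2}$-norm of a frequency-disjoint sum to be comparable to a weighted $\ell^{q_1}$-combination of the individual norms. By Plancherel the former is an exact $\ell^2$-combination precisely when $p_2=2$; a Khinchin/Littlewood--Paley comparison shows no such identity can survive for $p_2\neq 2$ across all coefficient choices, and, comparing pieces individually and summing, one is likewise driven to $q_1=2$ and $p_1=2$. This step should pin down $(p_1,q_1)=(2,2)$ and $p_2=2$.

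It then remains to force $q_2=2$ and to identify the weights, and this is where the unboundedness of $\mathcal{O}\cap\mathcal{O}'$ is used. With $(p_1,q_1)=(2,2)$ and $p_2=2$ in hand, the $\mathcal{Q}$-side is, by the Plancherel remark following Lemma \ref{cor:WeakEquivalenceOfInducedCoverings}, a weighted $L^2$ space, $\|f\|_{D(\mathcal{Q},L^2,\ell^2_v)}\asymp (\int |f(\xi)|^2 V(\xi)^2\, d\xi)^{1/2}$ with $V\approx v_i$ on $Q_i$. Since $\mathcal{O}\cap\mathcal{O}'$ is unbounded, infinitely many cells $P_j$ meet it; placing frequency-disjoint bumps into a growing number of distinct such cells and comparing the resulting $\ell^{q_2}$-combination against the weighted $L^2$ (i.e.\ $\ell^2$) behaviour of the $\mathcal{Q}$-side forces $q_2=2$ by the same mechanism as above. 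Once all exponents equal $(2,2)$, both sides are weighted $L^2$ norms with densities $\approx v_i$ on $Q_i$ and $\approx w_j$ on $P_j$; testing with bumps supported in $Q_i\cap P_j$ (nonempty by hypothesis) yields $v_i\asymp w_j$, which is the asserted trivial case.

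The hard part will be the geometric claim underlying the second paragraph: that the covering of the set carrying $\xi_0$ on its boundary must have cells of vanishing diameter accumulating at $\xi_0$, equivalently that $\mathcal{Q}$ and $\mathcal{P}$ fail to be weakly equivalent on $\mathcal{O}\cap\mathcal{O}'$. This is precisely where the boundary hypothesis $\mathcal{O}'\cap\partial\mathcal{O}\neq\emptyset$ must be combined with the structural normalization \eqref{eqn:str_cov_norm}; absent such control, a boundary-touching cell of fixed size could occur and the mismatch would collapse. A related difficulty is that Lemma \ref{cor:WeakEquivalenceOfInducedCoverings} is a \emph{global} statement for coverings of one common set, so I would need to localize it to $C_c^\infty(\mathcal{O}\cap\mathcal{O}')$. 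A secondary technical point is to carry the $L^{p}$--$\ell^{q}$ comparisons through the whole quasi-Banach range $0<p,q\le\infty$, including the endpoints, where Plancherel and Khinchin must be supplemented by Nikolskii- and embedding-type estimates for frequency-localized functions.
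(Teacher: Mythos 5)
Since the paper states Theorem \ref{thm:different_freq_supp} without an accompanying proof, your argument has to be judged on its own terms, and it has two genuine gaps, one fixable and one fatal as written. The fixable one is the geometric step you yourself flag as the ``hard part'': the claim that the cells of $\mathcal{Q}$ accumulate at $\xi_0$ with $\mathrm{diam}(Q_{i_n})\to 0$ is both stronger than what you need and false in general. For instance, the covering of $\mathcal{O}=(0,2)\times\mathbb{R}$ by the anisotropic boxes $(2^{-k-1},2^{-k+1})\times(m,m+2)$, $k\in\mathbb{N}_0$, $m\in\mathbb{Z}$, is admissible, admits a BAPU of tensor-product type, and its cells near $\xi_0=(0,0)\in\partial\mathcal{O}$ all have diameter $\geq 2$; the normalization \eqref{eqn:str_cov_norm} that you propose to invoke does not rule this out. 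What you actually need is only that infinitely many \emph{distinct} cells $Q_i$ meet $P_{j_0}$, and the correct mechanism for this is the existence of a BAPU (implicit in the decomposition norm being defined): each $\varphi_i\in C_c^\infty(\mathcal{O})$ has compact support at positive distance from $\xi_0\notin\mathcal{O}$, so if only finitely many cells met $B(\xi_0,\epsilon)$ then $\sum_i\varphi_i$ would vanish on $\mathcal{O}\cap B(\xi_0,\delta)$ for small $\delta>0$, contradicting $\sum_i\varphi_i\equiv 1$ on $\mathcal{O}$. (Your concern about Lemma \ref{cor:WeakEquivalenceOfInducedCoverings} is also justified: it concerns two coverings of one common set, so it cannot be applied here even after this repair; the entire burden falls on the test-function computation.)

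The fatal gap is the rigidity step in which you force the Lebesgue exponents to equal $2$. With same-profile, frequency-disjoint bumps and arbitrary coefficients, the Khinchin/Littlewood--Paley mechanism you describe provably \emph{cannot} yield $p_2=2$: the positions of the accumulating cells are dictated by $\mathcal{Q}$, not chosen by you, and in the prototypical case where they sit at lacunary distances from $\xi_0$ (dyadic cells $(2^{-k-1},2^{-k+1})$ inside $P_{j_0}=(-1,1)$), Zygmund's inequality for lacunary series gives $\|\sum_k c_k g_k\|_{L^{p}}\asymp \|c\|_{\ell^2}\,\|g\|_{L^{p}}$ for \emph{every} $0<p<\infty$ and every coefficient choice, when the $g_k$ are modulated copies of a single bump. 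So the ``$\ell^2$-behaviour'' you want to be exclusive to $p=2$ occurs for all $p$, and no contradiction appears; from this configuration one can only extract $q_1=2$ (sign randomization), $p_1=p_2$ (shrinking a single bump and comparing the scaling exponents $t^{d(1-1/p_1)}$ versus $t^{d(1-1/p_2)}$), and $v_{i_n}\asymp w_{j_0}$. To force the common exponent $p$ to equal $2$ one needs a genuinely multi-scale test function: bumps of geometrically shrinking widths $t_n$ (such shrinking intersections $Q_{i_n}\cap B(\xi_0,\rho)$ must exist by a bounded-overlap volume count), each normalized in $L^p$; then the one-cell $\mathcal{P}$-side behaves like $\bigl\|\bigl(\sum_{n\le M}|g_n|^2\bigr)^{1/2}\bigr\|_{L^p}\approx M^{1/p}$ while the decoupled $\mathcal{Q}$-side is $\approx M^{1/2}$, and only then does $p=2$ follow -- this is the classical construction separating $B^0_{p,2}$ from $L^p$. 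This idea is absent from your proposal, and together with the deferred quasi-Banach and endpoint cases it means the proof as written does not go through.
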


Note that the assumptions on $\mathcal{O},\mathcal{O}'$ are fulfilled if they are distinct open and dense subsets. Density and openness of $\mathcal{O}$ imply $\partial \mathcal{O} = \mathbb{R}^d \setminus \mathcal{O}$, and thus $\mathcal{O}' \cap \partial \mathcal{O} = \emptyset $ can only happen if $\mathcal{O}' \subsetneq \mathcal{O}$. In that case however we get $\partial \mathcal{O}' \cap \mathcal{O} = (\mathbb{R}^d \setminus \mathcal{O}') \cap \mathcal{O} \not= \emptyset$. Furthermore, $\mathcal{O} \cap \mathcal{O}'$ is dense, hence unbounded.

Our next aim is a metric characterization of weak equivalence. For this purpose we need to introduce the metric induced by a covering: 

\begin{definition}
Let $\calO\subset \R^d$ be open and let $\calQ = (Q_i)_{i\in I}$ be a covering of $\calO$. For $x,y\in \calO$, we say $x$ and $y$ are connected by a \textit{$\calQ$-chain (of length m)} if there exist $Q_1, \ldots, Q_m \in \calQ$ such that $x\in Q_1$, $y\in Q_m$ and $Q_k\cap Q_{k+1}\neq \emptyset$ for all $k\in \{1,\ldots, m-1\}$.
\end{definition}

We next define the metric, which is called \textit{$\mathcal{Q}$-chain distance} in \cite[Definition 3.4]{FeiGroeBanSpaRelToIntGrpRepI}.

\begin{definition}
Let $\calO\subset \R^d$ be open and let $\calQ = (Q_i)_{i\in I}$ be a covering of $\calO$. Define the map $d_\calQ:\calO \times \calO \to \N_0\cup \{\infty\}$ by
\begin{align*}
d_\calQ(x,y)=
\begin{cases}
\inf\Set{m\in \N |
\begin{array}{l}
x,y \text{ are connected by a}\\
\calQ\text{-chain of length } m
\end{array}
},
& x\neq y\\
0, &x=y,
\end{cases}
\end{align*}
where we set $\inf\emptyset=\infty$.
\end{definition}

\begin{remark} \label{rem:connectedness_1}
The above map in fact defines a metric on $\calO$, without any further restrictions on the covering. 
Observe that the metrics used in this paper are allowed to take the value $\infty$, just as in the precursor paper \cite{FuKo_Coarse}. If the covering $\mathcal{Q}$ consists of connected sets, the statement $d_\mathcal{Q}(x,y) < \infty$ is equivalent to the fact that $x,y$ are contained in the same connected component in $\mathcal{O}$. Ultimately this slight extension of the standard definition of metrics is innocuous in the setting we study in this paper; we refer to \cite{FuKo_Coarse}, specifically to Remark 3.20 therein, for more details. 
\end{remark}

\begin{definition}[cf. \cite{NowakYu2012} Definition 1.3.4]\label{def:quasi-isometry}
Let $(X, d_X)$ and $(Y, d_Y)$ be metric spaces. A map $f:X\to Y$ is a \textit{quasi-isometry} if the following conditions are satisfied:
\begin{enumerate}[i)]
\item The map $f$ is a \textit{quasi-isometric embedding}. This means there exist constants $L,C>0$ such that
\begin{align*}
L^{-1}d_X(x,x')-C \leq d_Y\left(f(x),f(x')\right) \leq L d_X(x,x') + C
\end{align*}
for all $x,x'\in X$.
\item The map $f$ is \textit{coarsely surjective}. This means that there exists $K>0$ such that for every $y\in Y$ exists an $x\in X$ with $d_Y\left(f(x), y\right)\leq K$.
\end{enumerate}
\end{definition}

The following result provides the metric reformulation of weak equivalence; see Theorem 3.22 of \cite{FuKo_Coarse}.
\begin{theorem}\label{thm:WeakEquivQuasiIso}
Let $\calO\subset \R^d$ be open and let $\calQ = (Q_i)_{i\in I}$ and $\calP=(P_j)_{j\in J}$ be structured admissible coverings of $\calO$ comprised of open connected subsets of $\calO$. Then the following statements are equivalent:
\begin{enumerate}[i)]
\item The coverings $\calQ$ and $\calP$ are weakly equivalent.
\item The map $\id:(\calO, d_\calQ) \to (\calO, d_\calP),\ x\mapsto x$ is a quasi-isometry.
\end{enumerate}
\end{theorem}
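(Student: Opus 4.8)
The plan is to prove the two implications separately. Since the underlying map is the identity on $\mathcal{O}$, condition ii) of Definition \ref{def:quasi-isometry} (coarse surjectivity) holds trivially with $K=0$, so the entire content is the comparison of the two chain distances up to multiplicative and additive constants. Two preliminary reductions streamline the argument. First, by Remark \ref{rem:connectedness_1}, for each of the coverings the distance is finite precisely on pairs lying in a common connected component of $\mathcal{O}$; since both coverings consist of connected sets and cover the same $\mathcal{O}$, the two metrics share the same finiteness pattern, and I only need to compare finite distances (the $\infty$-valued case being automatic). Second, because $\mathcal{Q}$ and $\mathcal{P}$ enter symmetrically, in each direction it suffices to establish one of the two inequalities and invoke symmetry for the other.

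For the forward implication (i) $\Rightarrow$ (ii), I would assume weak equivalence and set $N := N(\mathcal{Q},\mathcal{P})$. The crux is a \emph{local} comparison: if $x,y$ lie in a common member $Q_i$, equivalently $d_\mathcal{Q}(x,y)\le 1$, then $d_\mathcal{P}(x,y)\le N$. To prove it, observe that $\{P_j\cap Q_i : j\in J_i\}$ is an open cover of the connected set $Q_i$ by at most $|J_i|\le N$ pieces. Form the graph on $J_i$ with an edge between $j,j'$ whenever $P_j\cap P_{j'}\cap Q_i\neq\emptyset$; connectedness of $Q_i$ forces this graph to be connected, since otherwise the unions of $P_j\cap Q_i$ over distinct graph-components would split $Q_i$ into disjoint nonempty open sets. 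Hence any two points of $Q_i$ are joined by a $\mathcal{P}$-chain of length $\le |J_i|\le N$. The global estimate then follows by concatenation: given a minimal $\mathcal{Q}$-chain realizing $d_\mathcal{Q}(x,y)=m$, I interpolate points $x=z_0,z_1,\dots,z_m=y$ with each consecutive pair $z_{k-1},z_k$ in a common member of $\mathcal{Q}$, apply the local lemma, and sum via the triangle inequality to obtain $d_\mathcal{P}(x,y)\le Nm=N\,d_\mathcal{Q}(x,y)$. The symmetric estimate gives $d_\mathcal{Q}\le N(\mathcal{P},\mathcal{Q})\,d_\mathcal{P}$, so the identity is a quasi-isometric embedding with $L=\max\{N(\mathcal{Q},\mathcal{P}),N(\mathcal{P},\mathcal{Q})\}$ and $C=0$.

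For the converse (ii) $\Rightarrow$ (i), I would assume the identity is a quasi-isometry with constants $L,C$ and put $R:=L+C$; the goal is a uniform bound on $|J_i|$. Fix $i$ and some $j_0\in J_i$, and for each $j\in J_i$ choose $x_j\in Q_i\cap P_j$. Then $d_\mathcal{Q}(x_{j_0},x_j)\le 1$, so the embedding inequality gives $d_\mathcal{P}(x_{j_0},x_j)\le R$. Unwinding a realizing $\mathcal{P}$-chain of length $\le R$ and tracking indices through the neighbor operation of Definition \ref{def:SetOfNeighbors} (each chain step moves the index into the next neighbor set) shows $j\in j_0^{(R+1)*}$, whence $J_i\subseteq j_0^{(R+1)*}$. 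Writing $N_\mathcal{P}$ for the finite overlap constant of the admissible covering $\mathcal{P}$, the recursion $\#\!\left(j^{(k+1)*}\right)\le N_\mathcal{P}\cdot \#\!\left(j^{k*}\right)$ yields $\#\!\left(j_0^{(R+1)*}\right)\le N_\mathcal{P}^{\,R+1}$, a bound independent of $i$. Thus $N(\mathcal{Q},\mathcal{P})<\infty$, and symmetrically $N(\mathcal{P},\mathcal{Q})<\infty$, i.e.\ the coverings are weakly equivalent.

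The step I expect to require the most care is the local comparison lemma in the forward direction, namely that two points of a single connected $Q_i$ are joined by a short $\mathcal{P}$-chain. This is where connectedness of the covering sets is genuinely used, through the connected-components-of-the-nerve argument; without it, the bounded mutual overlap $N(\mathcal{Q},\mathcal{P})<\infty$ would control only how many $P_j$ meet $Q_i$, not the length of a chain inside $Q_i$, and the quasi-isometry bound would fail. The remaining delicate point is the bookkeeping of $\infty$-valued distances, but the first reduction above — that both coverings induce the same decomposition of $\mathcal{O}$ into connected components — disposes of it cleanly.
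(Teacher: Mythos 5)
Your proof is correct. One structural remark first: the paper does not prove Theorem \ref{thm:WeakEquivQuasiIso} at all --- it imports it verbatim as Theorem 3.22 of the reference \cite{FuKo_Coarse} --- so there is no in-paper argument to compare against; your proposal supplies a self-contained proof of the quoted result. The two key steps are both sound. In the forward direction, the local lemma (two points of a connected $Q_i$ are joined by a $\mathcal{P}$-chain of length at most $N(\mathcal{Q},\mathcal{P})$) is proved by exactly the right mechanism: the sets $P_j\cap Q_i$, $j\in J_i$, are open, nonempty and cover $Q_i$, and if the nerve graph on $J_i$ were disconnected, the unions over its components would disconnect $Q_i$; a simple path in that graph then has at most $\abs{J_i}$ vertices, and concatenation via the triangle inequality (which $d_\mathcal{P}$ does satisfy, since two chains sharing an endpoint concatenate) gives $d_\mathcal{P}\le N(\mathcal{Q},\mathcal{P})\,d_\mathcal{Q}$. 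You also correctly isolate where connectedness is indispensable --- without it this direction genuinely fails. In the converse, the counting argument $J_i\subset j_0^{(m+1)*}$ for a realizing chain of length $m$, together with $\#\bigl(J^{*}\bigr)\le N_\mathcal{P}\,\#(J)$ from the admissibility overlap constant, is correct and does not need connectedness. Two cosmetic points: since chain lengths are integers you should replace $R=L+C$ by $\lceil L+C\rceil$ before forming $j_0^{(R+1)*}$; and the ``symmetric'' half of the converse is not literally symmetric --- bounding $N(\mathcal{P},\mathcal{Q})$ uses the \emph{lower} inequality $L^{-1}d_\mathcal{Q}-C\le d_\mathcal{P}$ of Definition \ref{def:quasi-isometry}, giving $d_\mathcal{Q}\le L(1+C)$ on pairs with $d_\mathcal{P}\le 1$, after which the same iterated-star count applies with the roles of the coverings exchanged. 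Neither point affects the validity of the argument.
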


The following following characterization of decomposition space compatible dilations is one of the central results of this paper: 

\begin{theorem}\label{thm:char_sym_group}
Let  $\calO\subset \R^d$ be open and dense, and let $\calQ = (Q_i)_{i\in I}$ be a structured admissible covering consisting of connected subsets. Let $A \in {\rm GL}(d,\mathbb{R})$. Then $A \in \mathcal{S}_\mathcal{P}$ iff the following two conditions hold: 
\begin{enumerate}
    \item[(i)] $A \mathcal{O} = \mathcal{O}$.
    \item[(ii)] The map $\varphi_A: (\mathcal{O},d_\mathcal{P}) \to (\mathcal{O},d_{\mathcal{P}})$, $z \mapsto A z$ is a quasi-isometry. 
\end{enumerate}
\end{theorem}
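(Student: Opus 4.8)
The plan is to reduce the dilation of a function to a dilation of the covering, and then feed the resulting covering comparison into the weak-equivalence machinery of the previous results. Writing $A^{-1}\mathcal{Q} := (A^{-1}Q_i)_{i\in I}$, the starting point is the elementary Fourier identity $\mathcal{F}^{-1}(\varphi_i\cdot(f\circ A^{-1}))(x) = |\det A|\,\mathcal{F}^{-1}((\varphi_i\circ A)\cdot f)(A^Tx)$, which I would exploit as follows: the functions $\psi_i := \varphi_i\circ A$ form an $L^p$-BAPU subordinate to $A^{-1}\mathcal{Q}$, and a change of variables turns the local $L^p$-norm into $|\det A|^{1-1/p}\|\mathcal{F}^{-1}(\psi_i f)\|_{L^p}$, with the factor independent of $i$. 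Summing against the weight then yields
\[
\|f\circ A^{-1}\|_{\mathcal{D}(\mathcal{Q},L^p,\ell^q_v)} = |\det A|^{1-1/p}\,\|f\|_{\mathcal{D}(A^{-1}\mathcal{Q},L^p,\ell^q_v)},
\]
where $v$ is carried over index-by-index. Thus compatibility of $A$ becomes the statement that $\mathcal{D}(\mathcal{Q},L^p,\ell^q_v)$ and $\mathcal{D}(A^{-1}\mathcal{Q},L^p,\ell^q_v)$ have equivalent norms on $C_c^\infty(\mathcal{O}\cap A^{-1}\mathcal{O})$. (Here I read the statement with $\mathcal{P}=\mathcal{Q}$, so that $d_\mathcal{P}=d_\mathcal{Q}$.)

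For condition (i), I would argue by contraposition in the forward direction. Both $\mathcal{O}$ and $A^{-1}\mathcal{O}$ are open and dense, so if $A\mathcal{O}\neq\mathcal{O}$ they are distinct open dense sets, and by the remark following Theorem \ref{thm:different_freq_supp} its hypotheses hold for the pair $(\mathcal{O},A^{-1}\mathcal{O})$. Testing compatibility with the constant weight and any exponent pair $(p,q)\in(0,\infty)^2$ with $(p,q)\neq(2,2)$, the norm equivalence above would force the trivial case $(p,q)=(2,2)$, a contradiction. Hence $A\mathcal{O}=\mathcal{O}$, which in turn makes $\mathcal{O}\cap A^{-1}\mathcal{O}=\mathcal{O}$, so the comparison is between two honest structured admissible coverings of the same set $\mathcal{O}$; that $A^{-1}\mathcal{Q}$ is again structured, admissible and composed of connected sets follows from $A^{-1}Q_i=(A^{-1}T_i)Q+A^{-1}b_i$ together with $\|(A^{-1}T_i)^{-1}(A^{-1}T_j)\|=\|T_i^{-1}T_j\|$.

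Condition (ii) is then obtained from the weak-equivalence/quasi-isometry dictionary. The key observation is the metric conjugation identity $d_{A^{-1}\mathcal{Q}}(x,y)=d_\mathcal{Q}(Ax,Ay)$, valid because applying $A$ sends any $A^{-1}\mathcal{Q}$-chain joining $x,y$ to a $\mathcal{Q}$-chain joining $Ax,Ay$ of the same length, and conversely. Consequently $\id:(\mathcal{O},d_\mathcal{Q})\to(\mathcal{O},d_{A^{-1}\mathcal{Q}})$ is a quasi-isometry precisely when $\varphi_A:z\mapsto Az$ is a quasi-isometric embedding of $(\mathcal{O},d_\mathcal{Q})$ into itself; coarse surjectivity of $\varphi_A$ is automatic once $A\mathcal{O}=\mathcal{O}$, since $\varphi_A$ is then a bijection. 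Combining this with Theorem \ref{thm:WeakEquivQuasiIso} identifies (ii) with weak equivalence of $\mathcal{Q}$ and $A^{-1}\mathcal{Q}$. The forward direction now closes via Corollary \ref{cor:significance_weak_equivalence}, implication (c)$\Rightarrow$(a) (the constant-weight, single-exponent equivalence already secured gives weak equivalence), while the converse should run through (a)$\Rightarrow$(b), i.e.\ Lemma \ref{lem:weak_equiv_suff}.

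The hard part will be the weight bookkeeping in the converse. Compatibility demands the norm equivalence for every $\mathcal{Q}$-moderate weight $v$, and the reduction transports $v$ from $\mathcal{Q}$ to $A^{-1}\mathcal{Q}$ index-by-index; to invoke Lemma \ref{lem:weak_equiv_suff} one needs the \emph{cross-intersection} compatibility $v_i\asymp v_j$ whenever $Q_i\cap A^{-1}Q_j\neq\emptyset$, rather than the index-matched relation one is handed for free. Since this amounts to uniformly controlling the displacement $d_\mathcal{Q}(x,Ax)$ of the dilation along $\mathcal{O}$, establishing it from weak equivalence alone is the delicate step where the real work concentrates; by contrast the forward direction sidesteps the issue entirely by testing only with the constant weight. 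I would therefore expect the bulk of the technical effort — and the point most in need of the structural hypotheses on $\mathcal{Q}$ — to lie exactly here.
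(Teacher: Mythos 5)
Your strategy is the same as the paper's: convert the dilation of $f$ into a dilation of the covering (the paper works with $A\mathcal{Q}$ where you work with $A^{-1}\mathcal{Q}$, a difference of convention only), derive condition (i) from Theorem \ref{thm:different_freq_supp} applied to a pair of open dense sets, and derive condition (ii) from the weak-equivalence/quasi-isometry dictionary (Corollary \ref{cor:significance_weak_equivalence} and Theorem \ref{thm:WeakEquivQuasiIso}) combined with the observation that $z \mapsto Az$ is an \emph{isometry} between the conjugated metric and $d_\mathcal{Q}$ --- exactly the paper's composition-of-isometry-and-quasi-isometry step. In the forward direction you elide one step that the paper does spell out: Corollary \ref{cor:significance_weak_equivalence}(c) requires \emph{equality of the decomposition spaces as sets}, whereas your reduction only yields a norm equivalence on $C_c^\infty(\mathcal{O})$. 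The paper bridges this with an atomic decomposition argument (Theorem 5.6 of \cite{Voigtlaender_atomic_decspace}): every element of $\mathcal{D}(\mathcal{Q},L^p,\ell^q)$ expands into $C_c^\infty(\mathcal{O})$ atoms with convergence in the quasi-norm, so the equivalence on test functions upgrades to equality of the spaces. Your argument needs this insertion to legitimately invoke (c) $\Rightarrow$ (a).

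The genuine gap is the converse direction, which you set up but explicitly do not prove: you reduce it to the cross-intersection weight condition $v_i \asymp v_j$ whenever $Q_i \cap A^{-1}Q_j \neq \emptyset$ required by Lemma \ref{lem:weak_equiv_suff}, call it ``the delicate step where the real work concentrates,'' and stop. Be aware that your suspicion is justified in the strongest sense: that condition does \emph{not} follow from (i) and (ii). It is equivalent to a uniform bound on the displacement $\sup_{x \in \mathcal{O}} d_\mathcal{Q}(x,Ax)$, which is strictly stronger than $\varphi_A$ being a quasi-isometry. Concretely, take $\mathcal{O} = \mathbb{R}^d$, $\mathcal{Q}$ the covering by overlapping unit cubes centered at lattice points, $A = 2I_d$ and the moderate weight $v_i = e^{|i|}$: conditions (i) and (ii) hold, yet for bumps $f_n \in C_c^\infty$ supported near $n e_1$ the two sides of the compatibility estimate differ by a factor $\asymp e^{n}$. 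Hence the converse, quantified over \emph{all} moderate weights as in Definition \ref{defn:dec_compatible}, cannot be completed at all; it is only valid for a restricted weight class (constant weights, or intrinsic ones in the sense of Remark \ref{rem:ind_weight_intr}). For comparison, the paper's own proof of this direction is silent on the issue: it applies the weak-equivalence criterion and equation (\ref{eqn:norm_eq_dilated}) with constant weights, i.e.\ it proves sufficiency only for the constant-weight reading of Definition \ref{defn:dec_compatible}, which is also all that the later coorbit applications use. So your proposal is incomplete exactly where the paper's proof is (silently) restricted, but your diagnosis of where the difficulty lies is accurate --- indeed more candid than the paper's own treatment.
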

\begin{proof}
We first show necessity of the conditions. Hence assume that $A \in \mathcal{S}_\mathcal{P}$. 

Let $\mathcal{O}' = A \mathcal{O}$. Define the covering $\mathcal{P} = (P_i)_{i \in I}$ of $\mathcal{O}'$, with $P_i = A Q_i$. Then it is straightforward to see that $\mathcal{P}$ is a structured admissible covering of $\mathcal{O}'$, consisting of connected subsets. In addition, if $(\varphi_i)_{i \in I}$ is a BAPU subordinate to $\mathcal{P}$, then $(\varphi \circ A^{-1})_{i \in I}$ is a BAPU subordinate to $\mathcal{Q}$. 

We next observe that 
\[
(f \circ A^{-1}) \cdot \varphi_i = (f \cdot \psi_i) \circ A^{-1}~. 
\] As a consequence,
\[
\mathcal{F}^{-1} ((f \circ A) \cdot \varphi_i) = |\det(A)|^{-1} \left( \mathcal{F}^{-1}( f \cdot \psi_i) \right) \circ A^{-T}
\] where we used the notation $A^{-T} = (A^T)^{-1}$. This entails
\[
\| \mathcal{F}^{-1} ((f \circ A) \cdot \varphi_i) \|_p = |\det(A)|^{1/p-1} \|  \mathcal{F}^{-1} (f \cdot \psi_i) \|_p~, 
\]
which immediately entails the norm equivalence 
\begin{equation} \label{eqn:norm_eq_dilated} 
\| f \circ A^{-1} \|_{\mathcal{D}(\mathcal{Q}, L^p,\ell^q)} \asymp \| f \|_{\mathcal{D}(\mathcal{P},L^p,\ell^q)}~,
\end{equation}
for all $0<p,q < \infty$. On the other hand, the assumption gives rise to 
\[
\| f \|_{\mathcal{D}(\mathcal{Q},L^p,\ell^q}) \asymp \| f \circ A^{-1} \|_{\mathcal{D}(\mathcal{Q}, L^p,\ell^q)}~,
\]
which combines with the previous norm equivalence to yield  
\[
\| f \circ A^{-1} \|_{\mathcal{D}(\mathcal{Q},L^p,\ell^q)} \asymp \| f \circ A^{-1} \|_{\mathcal{D}(\mathcal{P},L^p,\ell^q)} 
\] and finally
\begin{equation} \label{eqn:norm_equiv_decsp}
\| f \|_{\mathcal{D}(\mathcal{Q},L^p,\ell^q)} \asymp \| f \|_{\mathcal{D}(\mathcal{P},L^p,\ell^q)} ~.
\end{equation} Each of these norm equivalences holds for $f \in C_c^{\infty}(\mathcal{O}\cap \mathcal{O}')$. Since both $\mathcal{O}$ and $\mathcal{O}'$ are open and dense, this entails via Theorem \ref{thm:different_freq_supp} that $\mathcal{O} = A \mathcal{O}$, i.e. condition (i). 

In order to establish condition (ii), note that the norm equivalence (\ref{eqn:norm_equiv_decsp}) on $C_c^\infty(\mathcal{O})$ entails equality of the associated decomposition spaces, i.e.,  
\begin{equation} \label{eqn:equal_decsp}
\forall 1\le p,q < \infty~:~\mathcal{D}(\mathcal{Q},L^p,\ell^q) = \mathcal{D}(\mathcal{Q},L^p,\ell^q)~.
\end{equation}
To see this, observe that by Theorem 5.6 of \cite{Voigtlaender_atomic_decspace}, there is a countable system $(\gamma_{\kappa})_{\kappa \in K} \subset  C_c^\infty( \mathcal{O})$ such that each 
$f \in \mathcal{D}(\mathcal{Q},L^p,\ell^q)$ can be written as 
\begin{equation} \label{eqn:atomic_decspace}
f = \sum_{\kappa \in K} c_\kappa \gamma_\kappa~,
\end{equation} converging unconditionally in $\| \cdot \|_{\mathcal{D}(\mathcal{Q},L^p,\ell^q)}$ as well as in $\mathcal{D}'(O)$. By choice of the $\gamma_\kappa \in C_c^\infty(\mathcal{O})$, the norm equivalence (\ref{eqn:norm_equiv_decsp}) applies to each finite partial sum on the right hand side, resulting in the convergence of (\ref{eqn:atomic_decspace}) in $\mathcal{D}(\mathcal{P},L^p,\ell^q)$ as well. Thus we have shown $ \mathcal{D}(\mathcal{Q},L^p,\ell^q) \subset  \mathcal{D}(\mathcal{P},L^p,\ell^q)$, and the converse follows by symmetry. 

But now Corollary \ref{cor:significance_weak_equivalence} (c) $\Rightarrow$ (a) entails weak equivalence of $\mathcal{P}$ and $\mathcal{Q}$, whence \ref{thm:WeakEquivQuasiIso} allows to conclude that 
\[
{\rm id}: (\mathcal{O},d_{\mathcal{Q}}) \to (\mathcal{O},d_{\mathcal{P}}) 
\] is a quasi-isometry. On the other hand, by construction of $\mathcal{P}$ from $\mathcal{Q}$, and of the associated metrics, the map 
\[
(\mathcal{O},d_{\mathcal{P}}) \to (\mathcal{O},d_{\mathcal{Q}})~,~ \xi \mapsto A \xi~,
\]
is in fact an isometry. 
In summary, we get that 
\[
(\mathcal{O},d_{\mathcal{Q})} \to (\mathcal{O},d_{\mathcal{Q}}) ~,~ \xi \mapsto A\xi 
\] 
is the composition of a quasi-isometry and an isometry, hence a quasi-isometry, which is condition (ii).

The converse direction is obtained largely by the same arguments: Assuming (i) and (ii), we conclude that $\mathcal{P}$ is weakly equivalent to $\mathcal{Q}$ (as defined above), which in turn implies via Corollary \ref{cor:WeakEquivalenceOfInducedCoverings} the equality of the associated decomposition spaces, and the equivalence of the associated norms. Now equation \ref{eqn:norm_eq_dilated} finishes the proof. 
\end{proof}

\subsection{Admissible dilation groups}

For a closed matrix group $H\leq \GL(\R^d)$, which we also call \textit{dilation group} in the following, we define the group $G:=\R^d\rtimes H$, generated by dilations with elements of $H$ and arbitrary translations, with the group law $(x,h)\circ (y,g) := (x+hy, hg)$. We denote integration with respect to a left Haar measure on $H$ with $\dd h$, the associated left Haar measure on $G$ is then given by $d(x,h)=\abs{\det h}^{-1}  dx dh$. The Lebesgue spaces on $G$ are always defined through integration with respect to a Haar measure. The group $G$ acts on the space $\mathrm{L}^2(\R^d)$ through the \textit{quasi-regular representation} $\pi$ defined by $[\pi(x,h)f](y):=\abs{\det h}^{-1/2} f(h^\inv (y-x))$ for $f\in \mathrm{L}^2(\R^d)$. The \textit{generalized continuous wavelet transform (with respect to $\psi\in \mathrm{L^2}(\R^d)$)} of $f$ is then given as the function
$W_\psi f:G \to \C: (x,h)\mapsto \braket{f, \pi(x,h)\psi}.$ Important properties of the map $W_\psi:f\mapsto W_\psi f$ depend on $H$ and the chosen $\psi$. If the quasi-regular representation is \textit{square-integrable}, which means that there exists $\psi\neq 0$ with $W_\psi \psi \in \mathrm{L}^2(G)$, and irreducible, then we call $H$ \textit{admissible} and the map $W_\psi:\mathrm{L}^2(\R^d) \to \mathrm{L}^2(G)$ is a multiple of an isometry, which gives rise to the (weak-sense)
    inversion formula
    \begin{equation} \label{eqn:waverec}
     f = \frac{1}{C_\psi}\int_G W_\psi f(x,h) \pi(x,h) \psi \dd(x,h) ~,
    \end{equation}
i.e., each $f \in \mathrm{L}^2(\mathbb{R}^d)$  is a continuous superposition of the wavelet system.
According to results in \cite{FuehrWaveletFramesAndAdmissibilityInHigherDImensions}, \cite{FuehrGeneralizedCalderonConditionsAndRegularOrbitSpaces}, the admissibility of $H$ can be characterized by the \textit{dual action} defined by $H \times \mathbb{R}^d \to  \R^d, (h,\xi) \mapsto h^{-T} \xi$. In fact, $H$ is admissible iff the dual action has a single open orbit $\mathcal{O}:=H^\invT \xi_0\subset \R^d$ of full measure for some $\xi_0\in \R^d$ and additionally the isotropy group $H_{\xi_0}:=\Set{h :p_{\xi_0}(h)=\xi_0}\subset H$ is compact; see e.g. \cite{FuehrGeneralizedCalderonConditionsAndRegularOrbitSpaces}.

Every admissible group gives rise to an associated admissible covering. This is done using the {\em dual action} by picking a {\em well-spread} family in $H$, i.e. a family of elements $(h_i)_{i\in I}  \subset H$ with the properties
    \begin{enumerate}[i)]
        \item there exists a relatively compact neighborhood $U\subset H$ of the identity such that $\bigcup_{i\in I}h_i U= H$ -- we say $(h_i)_{i\in I}$ is \textit{$U$-dense} in this case -- and
        \item there exists a neighborhood $V\subset H$ of the identity such that $h_iV \cap h_jV = \emptyset$ for $i\neq j$ -- we say $(h_i)_{i\in I}$ is \textit{$V$-separated} in this case.
    \end{enumerate}
    The  \textit{dual covering induced by $H$} is then given by the family  $\mathcal{Q}=(Q_i)_{i\in I}$, where $Q_i = p_{\xi_0}(h_i U)$ for some $\xi_0$ with $H^{-T}\xi_0=\mathcal{O}$. It can be shown that well-spread families always exist, and that the induced covering is indeed a tight structured admissible covering in the sense defined above. In particular, $\mathrm{L}^p$-BAPUs exist for this covering, according to Theorems 4.4.6 and 4.4.13 of \cite{Voigtlaender2015PHD}. Furthermore, there always exist induced coverings consisting of open and connected sets. For ease of reference, we state this as a lemma.

\begin{lemma}[\cite{KochDoktorarbeit} Corollary 2.5.9]\label{cor:ExistenceInducedConnectedCovering} Let $H$ denote an admissible dilation group, with open dual orbit $\mathcal{O}$. Then there always exists an induced covering of $\mathcal{O}$ by $H$ that is a tight structured admissible covering consisting of (path-) connected open sets.
\end{lemma}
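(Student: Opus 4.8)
The plan is to construct an induced covering directly from a well-spread family whose building neighborhood is chosen path-connected, and then to transport connectedness and openness through the orbit map. Write $p_{\xi_0}\colon H\to\mathcal{O}$, $h\mapsto h^{-T}\xi_0$, for the orbit map associated with the base point $\xi_0$. Since $H$ acts transitively on the open set $\mathcal{O}$ with compact stabilizer $H_{\xi_0}$, the induced bijection $H/H_{\xi_0}\to\mathcal{O}$ is a homeomorphism, and the canonical projection $H\to H/H_{\xi_0}$ is open; hence $p_{\xi_0}$ is a continuous open surjection. These two properties, continuity and openness, are exactly what I will use to carry topological features of subsets of $H$ over to subsets of $\mathcal{O}$.

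First I would produce a well-spread family with a path-connected building set. Fix a symmetric, relatively compact, path-connected open neighborhood $V$ of the identity in $H$; for instance take $V=\exp(B)$ for a small symmetric ball $B$ in the Lie algebra $\mathfrak{h}$, so that $V$ is the continuous image of a star-shaped set and therefore path-connected. Using Zorn's lemma, choose a subset $(h_i)_{i\in I}\subset H$ that is maximal with respect to the property that $h_iV\cap h_jV=\emptyset$ for $i\neq j$; this family is $V$-separated by construction. Maximality forces, for every $h\in H$, some index $i$ with $hV\cap h_iV\neq\emptyset$, whence $h\in h_iVV^{-1}=h_iV^2$ by symmetry of $V$; thus the family is $V^2$-dense. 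Setting $U:=V^2$ yields an open, relatively compact neighborhood of the identity that is path-connected, being the image of the path-connected set $V\times V$ under the continuous multiplication map. Consequently $(h_i)_{i\in I}$ is $U$-dense and $V$-separated, i.e. well-spread.

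Next I would read off the topological properties of the induced covering $\mathcal{Q}=(Q_i)_{i\in I}$ with $Q_i=p_{\xi_0}(h_iU)$. Each left translate $h_iU$ is open and path-connected, since left translation is a homeomorphism of $H$; hence $Q_i=p_{\xi_0}(h_iU)$ is open because $p_{\xi_0}$ is an open map, and path-connected because $p_{\xi_0}$ is continuous. That $\mathcal{Q}$ is in addition a tight structured admissible covering admitting $\mathrm{L}^p$-BAPUs is precisely the content of the general theory of dual coverings recalled above (Theorems 4.4.6 and 4.4.13 of \cite{Voigtlaender2015PHD}); since the only extra requirement I have imposed is path-connectedness of $U$, which does not interfere with the estimates underlying those results, the structured, admissible and tightness properties persist.

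The main obstacle is the openness of the orbit map $p_{\xi_0}$, as it is this property alone that guarantees the sets $Q_i$ are open; it rests on identifying $\mathcal{O}$ with the homogeneous space $H/H_{\xi_0}$, which in turn uses admissibility of $H$, namely the existence of a single open orbit of full measure together with compactness of the stabilizer. Everything else, the separation/density juggling between $V$ and $V^2$ and the transfer of path-connectedness, is routine once this identification is in place.
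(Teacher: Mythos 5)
Your proposal is correct, but note that the paper does not actually prove this lemma: it is quoted verbatim from an external source (Corollary 2.5.9 of Koch's thesis), so you have in effect reconstructed that proof from scratch. Your reconstruction is sound and follows the natural route: (1) a maximal $V$-separated family obtained via Zorn's lemma is automatically $V^2$-dense, so choosing $V=\exp(B)$ open, symmetric, relatively compact and path-connected makes $U=V^2$ a path-connected building neighborhood for a well-spread family; (2) openness and path-connectedness of the sets $Q_i = p_{\xi_0}(h_iU)$ then follow from openness and continuity of the orbit map; (3) the tight structured admissibility of the induced covering is deferred to Theorems 4.4.6 and 4.4.13 of Voigtlaender's thesis, which is exactly the same deferral the paper itself makes in the paragraph preceding the lemma, and those results indeed apply to an arbitrary well-spread family, so your extra requirement on $U$ costs nothing. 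Two minor points of precision: the homeomorphism $H/H_{\xi_0}\to\mathcal{O}$ (equivalently, openness of $p_{\xi_0}$) is a consequence of the open mapping theorem for transitive actions of $\sigma$-compact locally compact groups on locally compact Hausdorff spaces, for which openness of the orbit suffices; compactness of the stabilizer, while part of admissibility, plays no role in that identification. Also, if one unwinds the tightness requirement (a smaller set $P$ with $\overline{P}\subset Q$ whose translates still cover), your exact choice $U=V^2$ requires a small adjustment — e.g.\ taking $U=V^3$ and exploiting $\overline{V^2}\subset V^3$ together with $V^2$-density — but since you delegate tightness to the cited theorems rather than verifying it by hand, this does not create a gap in the argument as written.
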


We call any induced covering that is a structured admissible covering consisting of open and connected sets an \textit{induced connected covering of $\mathcal{O}$ by }$H$. Note that, two different induced coverings of the same group are always weakly equivalent, see e.g. \cite{KochDoktorarbeit} Corollary 2.6.5. This fact can be understood as a consequence of the decomposition space description of wavelet coorbit spaces in Theorem \ref{thm:FourierIsoCoorbitDecSpaces} below.

\subsection{Generalized wavelet coorbit spaces}
Coorbit spaces are defined in terms of the decay behavior of the generalized wavelet transform. 
To give a precise definition, we introduce weighted mixed $\mathrm{L}^p$-spaces on $G$, denoted by $\mathrm{L}^{p,q}_v(G)$ . By definition, this space is the set of functions
\begin{align*}
        \left\{ f:G\to \mathbb{C} : \int_H\left( \int_{\mathbb{R}^d} \left| f(x,h) \right|^p v(x,h)^p \dd x \right)^{q/p}\frac{\dd h}{|\det(h)|} <\infty \right\},
    \end{align*}
    with natural (quasi-)norm $\Vert \cdot\Vert_{\mathrm{L}^{p,q}_v}$. This definition is valid for $0< p,q <\infty$, for $p=\infty$ or $q=\infty$ the essential supremum has to be taken at the appropriate place instead. The function $v:G\to \mathbb{R}^{>0}$ is a measurable weight function that fulfills the condition $v(ghk)\leq v_0(g)v(h)v_0(k)$ for some submultiplicative measurable weight $v_0$. If the last condition is satisfied, we call $v$ left- and right moderate with respect to $v_0$. Thus, the expression $\Vert W_\psi f\Vert_{\mathrm{L}^{p,q}_v}$ can be read as a measure of wavelet coefficient decay of $f$. We consider weights which only depend on $H$. The coorbit space $\mathrm{Co}\left(\mathrm{L}^{p,q}_v(\mathbb{R}^d\rtimes H)\right)$ is then defined as the space
    \begin{align}\label{def:Coorbit}
        \left\{ f\in \mathcal{(H}^1_w)^\neg : W_\psi f \in W(\mathrm{L}^{p,q}_v(G))\right\}
    \end{align}
    for a suitable wavelet $\psi$ fulfilling various technical conditions, and some control weight $w$ associated to $v$. The space $(\mathcal{H}^1_w)^\neg$ denotes the space of antilinear functionals on 
$
        \mathcal{H}^1_w :=\left\{ f\in \mathrm{L}^2(\mathbb{R}^d): W_\psi f \in \mathrm{L}^1_w(G)\right\}
$. 
    Given a Banach function space $Y$ on $G$, we let $W(Y)$ denote the Wiener amalgam space defined by
$
    W_Q(Y):=\{f\in \mathrm{L}^\infty_{\text{loc}}(G) | M_Qf\in Y\}
 $
    with quasi-norm $\|f\|_{W_Q(Y)}:=\|M_Qf\|_Y$ for $f\in  W_Q(Y)$. Here we used the \textit{maximal function} $M_Qf$ for some suitable unit neighborhood $Q\subset G$, given by $
    M_Qf:G\to [0,\infty],\ x\mapsto \operatorname*{ess\ sup}_{y\in xQ}|f(y)|.$
   
   The appearance of the Wiener amalgam space in (\ref{def:Coorbit}) is necessary to guarantee consistently defined quasi-Banach spaces in the case $\{p,q\}\cap (0,1)\neq \emptyset$, see \cite{Rauhut2007CoorbSpacTheoForQuasiBanSpa} and \cite{Voigtlaender2015PHD}. In the classical coorbit theory for Banach spaces, which was developed in \cite{FeiGroeBanSpaRelToIntGrpRepI}, \cite{FeiGroeBanSpaRelToIntGrpRepII}, the Wiener amalgam space is replaced by $\mathrm{L}^{p,q}_v(G)$ and this change leads to the same space for $p,q\geq 1$, see \cite{Rauhut2007CoorbSpacTheoForQuasiBanSpa}.
   
  Many useful properties of these spaces are known and hold in the quasi-Banach space case as well as in the Banach space case. The most prominent examples of coorbit spaces associated to generalized wavelet transforms are the homogeneous Besov spaces and the modulation spaces. However, each shearlet group, a class of groups we introduce in the next subsection, gives rise to its own scale of coorbit spaces, as well; see \cite{kutyniok2012shearlets}, \cite{DahlkeHaeuTesCooSpaTheForTheToeSheTra} and \cite{FuehCooSpaAndWavCoeDecOveGenDilGro}.
  
\begin{remark} \label{rem:coorbit_embed_L2} For $0\le p,q \le 2$ and constant weights $v$, the coorbit spaces $Co(L^{p,q}_v))$ has a canonical embedding into $L^2(\mathbb{R}^d)$; see Remark 2.15 of \cite{FuKo_Coarse}.  Here the appeal to the anti-dual  $(\mathcal{H}^1_w)^\neg$  can be avoided, i.e. one can simply define
\[
Co(L^{p,q}_v) =         \left\{ f\in L^2(\mathbb{R}^d) : W_\psi f \in W(\mathrm{L}^{p,q}_v(G))\right\}~.
\] This has the useful implication that coorbit spaces associated to different dilation groups can be compared in a straightforward manner. 
\end{remark}
 
%
     
The next definition will be useful for the transfer of weights from the coorbit to the decomposition space setting.      
     
\begin{definition}[\cite{Voigtlaender2015PHD} Definition 4.5.3.]\label{def:DecompositionWeight}
For $q\in (0,\infty]$ and a weight $m: H \to (0,\infty)$, we define the weight
$
m^{(q)}:H\to (0,\infty),\ h\mapsto \abs{\det(h)}^{\frac{1}{2} - \frac{1}{q}} m(h).
$ Here, we set $\frac{1}{\infty}:=0$. 
\end{definition}

The connection between coorbit spaces and decomposition spaces is given by the next theorem. For the Banach space case, we also refer to \cite{FuehrVoigtlWavCooSpaViewAsDecSpa}. A recent extension beyond the irreducible setting can be found in \cite{fuehrvelthoven2020coorbit}. 

\begin{theorem}[\cite{Voigtlaender2015PHD} Theorem 4.6.3]\label{thm:FourierIsoCoorbitDecSpaces}
Let $\mathcal{Q}$ be a covering of the dual orbit $\mathcal{O}$ induced by $H$, $0<p,q\leq\infty$ and $u=(u_i)_{i\in I}$ a suitable weight, then the Fourier transform
$
       \mathcal{F}: \mathrm{Co}\left(\mathrm{L}^{p,q}_v(\mathbb{R}^d\rtimes H)\right) \to \mathcal{D}(\mathcal{Q}, \mathrm{L}^p, \ell^q_u)
$
    is an isomorphism of (quasi-) Banach spaces. The weight $(u_i)_{i\in I}$ can be chosen as $u_i:=v^{(q)}(h_i)$, where $(h_i)_{i\in I}$ is the well-spread family used in the construction of $\mathcal{Q}$ and we call such a weight a $\mathcal{Q}-$discretization of $v$.
\end{theorem}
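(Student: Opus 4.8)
The plan is to transport the coorbit norm to the Fourier side and show that, once the scale variable is discretized along the well-spread family $(h_i)_{i \in I}$, it reduces component-by-component to the decomposition norm. The natural starting point is the Fourier-domain formula for the wavelet transform. Using Plancherel together with the substitution $\eta = h^T\xi$, one computes $\widehat{\pi(x,h)\psi}(\xi) = \abs{\det h}^{1/2} e^{-2\pi i \langle x,\xi\rangle}\widehat{\psi}(h^T\xi)$, and hence
\[
W_\psi f(x,h) = \braket{\widehat{f}, \widehat{\pi(x,h)\psi}} = \abs{\det h}^{1/2}\, \mathcal{F}^{-1}\!\big[\widehat{f}\cdot\overline{\widehat{\psi}(h^T\,\cdot\,)}\big](x)~.
\]
This exhibits $W_\psi f(\cdot,h)$ as a frequency-localized piece of $f$, the localizer $\overline{\widehat{\psi}(h^T\,\cdot\,)}$ being supported in $h^{-T}\,\mathrm{supp}\,\widehat\psi$. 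For $h = h_i$ this is exactly the frequency region governing the covering set $Q_i = p_{\xi_0}(h_i U)$, which is the geometric heart of the correspondence.

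Next I would evaluate the mixed-norm integral. Since the weight depends only on $H$, the inner integral is $\int_{\R^d}\abs{W_\psi f(x,h)}^p v(h)^p\,\dd x = v(h)^p \abs{\det h}^{p/2}\,\Vert \mathcal{F}^{-1}[\widehat f\cdot\overline{\widehat\psi(h^T\cdot)}]\Vert_{L^p}^p$. Raising to the power $q/p$ and integrating against $\dd h/\abs{\det h}$, I would discretize the $H$-integral using that $(h_i)_{i\in I}$ is $U$-dense and $V$-separated. Moderateness of $v$ and relative compactness of $U$ make the factor $v(h)^q\abs{\det h}^{q/2}$ comparable to $v(h_i)^q\abs{\det h_i}^{q/2}$ on $h_iU$, so these pull out of the local integrals; left-invariance of $\dd h$ yields $\mu(h_iU) = \abs{\det h_i}^{-1}\int_U \abs{\det h'}^{-1}\dd h'$. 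The scale factor $\abs{\det h_i}^{q/2}$ thus combines with $\abs{\det h_i}^{-1}$ into $\abs{\det h_i}^{q/2-1}$, which together with $v(h_i)^q$ is precisely $(v^{(q)}(h_i))^q$, explaining the asserted weight $u_i = v^{(q)}(h_i)$. The case $p=\infty$ or $q=\infty$ is handled identically with essential suprema in place of the integrals.

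What remains after discretization is to compare the localizer contribution with the BAPU quantity, i.e. to prove $\Vert\mathcal{F}^{-1}[\widehat f\cdot\overline{\widehat\psi(h_i^T\cdot)}]\Vert_{L^p}\asymp \Vert\mathcal{F}^{-1}[\varphi_i\widehat f]\Vert_{L^p}$ uniformly in $i$. This rests on the comparability of the two frequency supports and on the structured-covering bound \eqref{eqn:str_cov_norm}: normalizing $\psi$ so that $\overline{\widehat\psi(h_i^T\cdot)}$ equals $1$ on $\mathrm{supp}\,\varphi_i$ and choosing a neighbor sum $\widetilde\varphi_i$ equal to $1$ there, one writes $\mathcal{F}^{-1}[\varphi_i\widehat f] = \mathcal{F}^{-1}[\varphi_i\,\widetilde\varphi_i]\ast\mathcal{F}^{-1}[\widehat f\cdot\overline{\widehat\psi(h_i^T\cdot)}]$, and the uniform $L^1$- (resp.\ $\abs{\det T_i}$-weighted $L^p$-) bounds on the convolution kernels supplied by the $\mathrm{L}^p$-BAPU conditions give both inequalities. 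Independence of the decomposition norm from the chosen BAPU, stated after the definition of $\mathcal{D}(\mathcal{Q},\mathrm{L}^p,\ell^q_u)$, then closes this step.

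The main obstacle, and the technical core, is to justify that sampling $W_\psi f$ at the points $h_i$ is \emph{faithful}: the passage in the second paragraph from the continuous $H$-integral of $\Vert\mathcal{F}^{-1}[\widehat f\cdot\overline{\widehat\psi(h^T\cdot)}]\Vert_{L^p}^q$ to its value at $h=h_i$ is not elementary, since this quantity need not be slowly varying. This is exactly what the Wiener amalgam space $W(\mathrm{L}^{p,q}_v)$ in the definition of $\mathrm{Co}(\mathrm{L}^{p,q}_v)$ is designed to control: the maximal function $M_Q(W_\psi f)$ dominates the oscillation of $W_\psi f$ over unit neighborhoods in $G$, and the reproducing identity $W_\psi f = W_\psi f \ast W_\psi\psi$ together with convolution relations for the kernel $W_\psi\psi$ in Wiener amalgam spaces---valid in the full quasi-Banach range $0<p,q\le\infty$ by \cite{Rauhut2007CoorbSpacTheoForQuasiBanSpa} under the relevant conditions on $\psi$ and the control weight $w$---shows that the sampled $\ell^q(\ell^p)$ quantity and the continuous amalgam norm agree up to constants. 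Granting this, $\mathcal{F}$ is bounded and injective from $\mathrm{Co}(\mathrm{L}^{p,q}_v)$ into $\mathcal{D}(\mathcal{Q},\mathrm{L}^p,\ell^q_u)$; surjectivity follows from the atomic decomposition of coorbit spaces, whose atoms $\mathcal{F}$ carries to a generating system of the decomposition space, so that every decomposition-space element is the image of a coorbit expansion converging in both norms.
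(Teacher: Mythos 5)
The paper itself contains no proof of this statement: it is imported verbatim from Voigtlaender's thesis (Theorem 4.6.3), so your attempt can only be compared with the strategy of that external source. Measured against it, your outline has the right architecture, and your concrete computations are correct: the formula $W_\psi f(x,h) = \abs{\det h}^{1/2}\,\mathcal{F}^{-1}\bigl[\widehat{f}\cdot\overline{\widehat{\psi}(h^T\,\cdot\,)}\bigr](x)$ is right, and so is the weight bookkeeping --- the factor $v(h_i)^q\abs{\det h_i}^{q/2}$ against the mass $\abs{\det h_i}^{-1}\int_U \abs{\det h'}^{-1}\,\mathrm{d}h'$ of $h_iU$ does produce $(v^{(q)}(h_i))^q$, which is exactly the asserted discretized weight $u_i = v^{(q)}(h_i)$.

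Two of your steps, however, are defective or missing rather than merely compressed. First, the claimed uniform pointwise equivalence $\Vert\mathcal{F}^{-1}[\widehat f\,\overline{\widehat\psi(h_i^T\cdot)}]\Vert_{L^p}\asymp \Vert\mathcal{F}^{-1}[\varphi_i\widehat f]\Vert_{L^p}$ is false as stated: if $\widehat f$ is supported in a neighboring set $Q_j\setminus Q_i$ that still meets $\mathrm{supp}\,\widehat\psi(h_i^T\cdot)$, the right-hand side vanishes while the left does not. What actually holds is a two-sided estimate against the clustered quantity $\sum_{j\in i^{k*}}\Vert\mathcal{F}^{-1}[\varphi_j\widehat f]\Vert_{L^p}$ for a suitable uniform $k$, which yields an equivalence only after taking $\ell^q$-norms and invoking the finite-overlap property of the covering together with $\mathcal{Q}$-moderateness of $u$. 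Second, the step you yourself identify as the technical core --- that the continuous amalgam norm $\Vert M_Q(W_\psi f)\Vert_{L^{p,q}_v}$ is comparable to the discrete $\ell^q(\ell^p)$ quantity sampled at the $h_i$ --- is not proved but delegated to ``convolution relations in Wiener amalgam spaces''; verifying the hypotheses of those relations (oscillation estimates for the kernel $W_\psi\psi$, the role of the control weight $w$, uniformity of constants, all in the quasi-Banach range $p,q<1$) is where essentially the entire difficulty of Voigtlaender's proof lies, and nothing in your sketch substitutes for it. The same applies to surjectivity: for $g\in\mathcal{D}(\mathcal{Q},L^p,\ell^q_u)$ you must first make sense of $\mathcal{F}^{-1}g$ as an element of the reservoir $(\mathcal{H}^1_w)^\neg$ (cf.\ Remark \ref{rem:coorbit_decspace_four}) and then show that its wavelet transform lies in $W(L^{p,q}_v(G))$ by reversing the estimates; the assertion that ``atoms map to a generating system'' supplies neither of these.
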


\begin{remark} \label{rem:ind_weight_intr}
 In the following, we will mostly concentrate on constant weights, i.e. on the study of coorbit spaces of the type $Co(L^{p,q}(G))$ corresponding to $v\equiv 1$. This has the important consequence that the $\mathcal{Q}$-discretization $(u_i)_{i \in I}$ obtained from a dual covering $\mathcal{Q} = (Q_i)_{i \in I} = (h_i^{-T} Q)_{i \in I}$ fulfills
 \[
  u_i = |{\rm det}(h_i)|^{\frac{1}{2}-\frac{1}{q}} = |Q|^{\frac{1}{2}-\frac{1}{q}} |Q_i|^{\frac{1}{q}-\frac{1}{2}} \asymp |Q_i|^{\frac{1}{q}-\frac{1}{2}} ~.
 \] In the parlance of \cite[Definition 2.7]{FuKo_Coarse}, the induced weight $(u_i)_{i \in I}$ is \textbf{intrinsic} with exponent $\alpha =\frac{1}{q}-\frac{1}{2}$. 
\end{remark}

\begin{remark} \label{rem:coorbit_decspace_four}
Note that the domain of the Fourier transform in the previous theorem requires some additional clarification, which can be found in Remark 2.15 of \cite{FuKo_Coarse}. For the following discussion it will be sufficient to recall the already mentioned inclusion of $Co(L^{p,q}_v) \subset L^2(\mathbb{R}^d)$ for $1 \le p,q \le 2$ and constant weight $v$. With this identification the Fourier transform from Theorem \ref{thm:FourierIsoCoorbitDecSpaces} coincides with the Plancherel transform on $L^2(\mathbb{R}^d)$.
\end{remark}

We next formalize the property that two admissible dilation groups have the same coorbit spaces. We already pointed out that a literal interpretation of this property is not generally available, at least not for all possible choices of coorbit space norms. 

\begin{definition} \label{defn:coorbit_equivalent}
 Let $H_1, H_2 \le GL(\mathbb{R}^d)$ denote admissible matrix groups. We call $H_1,H_2$ \textit{coorbit equivalent} if for all $0 < p,q \le \infty$ and for all $f \in L^2(\mathbb{R}^d)$ we have 
 \[
 \| f \|_{ Co(L^{p,q}(\mathbb{R}^d \rtimes H_1))} \asymp \| f \|_{Co(L^{p,q}(\mathbb{R}^d \rtimes H_2))}~.
 \] 
Here the norm equivalence is understood in the generalized sense that one side is infinite iff the other side is. 
\end{definition}

\begin{remark}
 An example of distinct groups that are coorbit equivalent can be found in \cite{FuehrVoigtlWavCooSpaViewAsDecSpa}, Section 9: If $H = \mathbb{R}^+ \times SO(d)$, for $d>1$, and $C \in GL(\mathbb{R}^d)$ is arbitrary, then $H$ and $C^{-1} H C$ are coorbit equivalent, but typically distinct.  
\end{remark}

The question whether two groups are coorbit equivalent can now be answered using the metric criteria for decomposition spaces. The following result is essentially \cite[Theorem 4.17]{FuKo_Coarse}. Note that condition (c) follows by combining the decomposition space description of coorbit spaces from Theorem \ref{thm:FourierIsoCoorbitDecSpaces} with the rigidity property of decomposition spaces, formulated after Lemma \ref{lem:weak_equiv_suff}.

\begin{theorem} \label{thm:coorbit_equiv_dual_orbits}
 Let $H_1, H_2 \le {\rm GL}(\mathbb{R}^d)$ denote admissible matrix groups, and let $\mathcal{O}_1,\mathcal{O}_2$ denote the associated open dual orbits. Then the following are equivalent:
 \begin{enumerate}
  \item[(a)] $H_1$ and $H_2$ are coorbit equivalent. 
  \item[(b)] For all $1 \le p,q \le 2$: $Co(L^{p,q}(\mathbb{R}^d \rtimes H_1)) = Co(L^{p,q}(\mathbb{R}^d \rtimes H_2))$, as subspaces of $L^2(\mathbb{R}^d)$. 
  \item[(c)] There exists $1 \le p,q \le 2$ with $(p,q) \not= (2,2)$, such that $Co(L^{p,q}(\mathbb{R}^d \rtimes H_1)) = Co(L^{p,q}(\mathbb{R}^d \rtimes H_2))$, as subspaces of $L^2(\mathbb{R}^d)$. 
  \item[(d)] $\mathcal{O}_1 =  \mathcal{O}_2$, and the coverings induced by $H_1$ and $H_2$ on the common open orbit are weakly equivalent. 
 \end{enumerate}
\end{theorem}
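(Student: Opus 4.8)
The plan is to prove the cycle of implications (a) $\Rightarrow$ (b) $\Rightarrow$ (c) $\Rightarrow$ (d) $\Rightarrow$ (a). The unifying device throughout is the Fourier transform isomorphism of Theorem \ref{thm:FourierIsoCoorbitDecSpaces}, which for the constant weight $v \equiv 1$ identifies each $Co(L^{p,q}(\mathbb{R}^d \rtimes H_i))$ with the decomposition space $\mathcal{D}(\mathcal{Q}_i, L^p, \ell^q_{u_i})$, where $\mathcal{Q}_i$ is an induced covering of the dual orbit $\mathcal{O}_i$ and $u_i$ is the induced intrinsic weight of exponent $\tfrac1q - \tfrac12$ from Remark \ref{rem:ind_weight_intr}. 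By Remark \ref{rem:coorbit_decspace_four}, on the range $1 \le p,q \le 2$ this transform is just the Plancherel transform, so statements about coorbit spaces as subspaces of $L^2(\mathbb{R}^d)$ transfer verbatim to the decomposition side. The implication (a) $\Rightarrow$ (b) is then immediate: restricting the range in Definition \ref{defn:coorbit_equivalent} to $1 \le p,q \le 2$ and using the canonical $L^2$ embedding of Remark \ref{rem:coorbit_embed_L2}, the generalized norm equivalence forces the two spaces to consist of the same $L^2$ functions with equivalent norms. The step (b) $\Rightarrow$ (c) is the trivial specialization to a single pair, e.g. $(p,q) = (1,1)$.

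The first substantial step is (c) $\Rightarrow$ (d). Fix the nontrivial pair $(p,q)$ supplied by (c). Through the Plancherel identification above, the equality $Co(L^{p,q}(\mathbb{R}^d \rtimes H_1)) = Co(L^{p,q}(\mathbb{R}^d \rtimes H_2))$ becomes the equality $\mathcal{D}(\mathcal{Q}_1, L^p, \ell^q_{u_1}) = \mathcal{D}(\mathcal{Q}_2, L^p, \ell^q_{u_2})$, with equivalent norms (the norm equivalence following from the closed graph theorem for the two quasi-Banach spaces, both continuously included in $L^2$). I would first rule out $\mathcal{O}_1 \neq \mathcal{O}_2$: both orbits are open and of full measure, hence dense, so two distinct orbits meet the hypotheses of Theorem \ref{thm:different_freq_supp}. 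Testing the norm equivalence on functions $f \in C_c^\infty(\mathcal{O}_1 \cap \mathcal{O}_2)$ (whose inverse Fourier transforms are Schwartz functions lying in every coorbit space) yields $\| f \|_{\mathcal{D}(\mathcal{Q}_1, L^p, \ell^q_{u_1})} \asymp \| f \|_{\mathcal{D}(\mathcal{Q}_2, L^p, \ell^q_{u_2})}$, which Theorem \ref{thm:different_freq_supp} forbids for nonzero $f$ since $(p,q) \neq (2,2)$. Hence $\mathcal{O}_1 = \mathcal{O}_2 =: \mathcal{O}$. Now $\mathcal{Q}_1, \mathcal{Q}_2$ are structured admissible coverings of the same $\mathcal{O}$ with $\mathcal{Q}_i$-moderate intrinsic weights; were they not weakly equivalent, Lemma \ref{cor:WeakEquivalenceOfInducedCoverings} would force $\mathcal{D}(\mathcal{Q}_1, L^p, \ell^q_{u_1}) \neq \mathcal{D}(\mathcal{Q}_2, L^p, \ell^q_{u_2})$, contradicting the equality just established. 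Thus the induced coverings are weakly equivalent, which is (d).

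For (d) $\Rightarrow$ (a), I would use Lemma \ref{cor:ExistenceInducedConnectedCovering} to take both induced coverings to consist of connected open sets (any two induced coverings of a fixed group are weakly equivalent, so this replacement is harmless). Weak equivalence of $\mathcal{Q}_1, \mathcal{Q}_2$ over the common orbit, together with compatibility of the induced intrinsic weights, then lets me apply Lemma \ref{lem:weak_equiv_suff} — in its connected-set form valid on the full range — to conclude $\mathcal{D}(\mathcal{Q}_1, L^p, \ell^q_{u_1}) = \mathcal{D}(\mathcal{Q}_2, L^p, \ell^q_{u_2})$ with equivalent norms for all $0 < p,q \le \infty$. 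Transporting back through the Fourier isomorphism gives the coorbit norm equivalence of Definition \ref{defn:coorbit_equivalent}, i.e. (a). The weight-compatibility hypothesis of Lemma \ref{lem:weak_equiv_suff} is exactly where the intrinsic description from Remark \ref{rem:ind_weight_intr}, namely $u_1(i) \asymp |Q_i^{(1)}|^{1/q-1/2}$ and $u_2(j) \asymp |Q_j^{(2)}|^{1/q-1/2}$, is needed: it reduces compatibility to the statement that cells $Q_i^{(1)}, Q_j^{(2)}$ with $Q_i^{(1)} \cap Q_j^{(2)} \neq \emptyset$ have comparable Lebesgue measures.

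I expect that last point — comparability of volumes of mutually overlapping cells in two weakly equivalent structured coverings — to be the main obstacle, since it is the one genuinely geometric input not packaged as a cited black box. I would derive it from weak equivalence (bounded mutual overlap counts in both directions) combined with the moderateness bound \eqref{eqn:str_cov_norm} internal to each structured covering, which controls the volumes of neighboring cells within a single covering and hence, via the bounded overlaps, across the two coverings. A secondary, more bookkeeping-type difficulty appears in (c) $\Rightarrow$ (d): one must verify that the test functions probing Theorem \ref{thm:different_freq_supp} genuinely lie in both coorbit spaces, and that "equality as subspaces of $L^2$" legitimately upgrades to equivalence of quasi-norms.
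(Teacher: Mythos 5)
Your proposal follows precisely the route that the paper itself only gestures at: the paper offers no proof of Theorem \ref{thm:coorbit_equiv_dual_orbits}, citing it as essentially Theorem 4.17 of \cite{FuKo_Coarse} and remarking only that condition (c) is handled by combining the decomposition space description (Theorem \ref{thm:FourierIsoCoorbitDecSpaces}) with the rigidity property after Lemma \ref{lem:weak_equiv_suff}. Your cycle (a)$\Rightarrow$(b)$\Rightarrow$(c)$\Rightarrow$(d)$\Rightarrow$(a) instantiates exactly this plan: Plancherel identification on the range $1\le p,q\le 2$, closed graph to upgrade set equality to quasi-norm equivalence, Theorem \ref{thm:different_freq_supp} to force $\mathcal{O}_1=\mathcal{O}_2$ (the one point not covered by Corollary \ref{cor:significance_weak_equivalence}, which presupposes a common orbit), Lemma \ref{cor:WeakEquivalenceOfInducedCoverings} for weak equivalence, and Lemma \ref{lem:weak_equiv_suff} in its connected-covering form for the converse. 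This is the correct reconstruction, and your bookkeeping concerns in (c)$\Rightarrow$(d) are indeed only bookkeeping.

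The one step where your argument, as written, does not close is the cross-covering weight compatibility in (d)$\Rightarrow$(a). You propose to deduce $|Q^{(1)}_i|\asymp |Q^{(2)}_j|$ for intersecting cells from ``bounded mutual overlap counts plus the internal moderateness bound (\ref{eqn:str_cov_norm})''. But (\ref{eqn:str_cov_norm}) only compares cells of the \emph{same} covering that intersect \emph{each other}, and two cells $Q^{(2)}_j$, $Q^{(2)}_{j'}$ that both meet a fixed $Q^{(1)}_i$ need not intersect one another, so bounded overlaps alone do not transport the comparison across the two coverings. The missing link is connectedness, which you have available (you invoked Lemma \ref{cor:ExistenceInducedConnectedCovering}) but never use at this point: the open connected set $Q^{(1)}_i$ is covered by its at most $N=N(\mathcal{Q}_1,\mathcal{Q}_2)$ open $\mathcal{Q}_2$-neighbours, so a standard connectedness argument joins any two of these neighbours by a chain of pairwise intersecting $\mathcal{Q}_2$-neighbours of $Q^{(1)}_i$ of length at most $N$; iterating (\ref{eqn:str_cov_norm}) along the chain (via $|\det(T_{j'}^{-1}T_{j''})|\le \|T_{j'}^{-1}T_{j''}\|^{d}$) gives $|Q^{(2)}_{j'}|\asymp|Q^{(2)}_{j''}|$ with a constant depending only on $N$, $d$ and the covering constant, whence
\[
|Q^{(1)}_i|\;\le\; N\max_{j'}|Q^{(2)}_{j'}|\;\lesssim\; |Q^{(2)}_j|\,,
\]
with the reverse inequality by the symmetric argument. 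With this supplement, Remark \ref{rem:ind_weight_intr} converts volume comparability into the weight hypothesis of Lemma \ref{lem:weak_equiv_suff} (with a $q$-dependent constant, which is permissible since the equivalence constants in Definition \ref{defn:coorbit_equivalent} may depend on $p,q$), and the rest of your argument goes through.
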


Following the cue of coorbit equivalence, we now introduce a notion of coorbit compatible matrices that focuses on invariance of certain coorbit spaces. 
\begin{definition} \label{defn:coorbit_compatible}
Let $H< {\rm GL}(\mathbb{R}^d)$ denote an admissible matrix group, and $A \in {\rm GL}(\mathbb{R}^d)$. We call $A$ \textbf{coorbit compatible with $H$}, if for all  $0 < p,q \le \infty$ and for all $f \in L^2(\mathbb{R}^d)$ we have 
 \[
 \| f \|_{ Co(L^{p,q}(\mathbb{R}^d \rtimes H))} \asymp \| f \circ A^{-1} \|_{Co(L^{p,q}(\mathbb{R}^d \rtimes H))}~.
 \] 
 We let 
 \[ 
 \mathcal{S}_{Co_H}  = \{ A \in {\rm GL}(\mathbb{R}^d) : A \mbox{ is coorbit compatible with } H \} \]
\end{definition}

Just as for $\mathcal{S}_{\mathcal{P}}$, we immediately obtain that $\mathcal{S}_{Co_H}$ is a group: 
\begin{remark} \label{rem:SCo_subgroup}
If $H< {\rm GL}(\mathbb{R}^d)$ is an arbitrary admissible matrix group, then $\mathcal{S}_{Co_H} \subset {\rm GL}(\mathbb{R}^d)$ is a subgroup. In particular, $A \in \mathcal{S}_{Co_H}$ iff $A^{-1} \in \mathcal{S}_{Co_H} $. Furthermore, given $A \in H$, we can compute  
\[
W_\psi (f \circ A) (x,h) = (W_\psi f)((0,A^{-1})\cdot (x,h))~,
\] and now left invariance of the spaces $L^{p,q}_v(\mathbb{R}^d \rtimes H)$ entails that $A \in \mathcal{S}_{Co_H}$, showing the inclusion $H \subset \mathcal{S}_{Co_H}$. It is currently not known whether $\mathcal{S}_{Co_H}$ is generally closed.
\end{remark}

 We next introduce the word metrics needed to formulate criteria for coorbit compatibility at the group level. 
 
\begin{definition}
Let $H$ be a locally compact group and let $W\subset H$ be a unit neighborhood. Define the map
$d_W : H\times H \to \N_0\cup \{\infty\}$
in the following way
\begin{align*}
d_W(x,y)=
\begin{cases}
\inf\Set{m \in \N| x^\inv y \in W^m } & x\neq y\\
0& x=y,
\end{cases}
\end{align*}
where we again set $\inf\emptyset = \infty$.
\end{definition}

The following results rest on a somewhat subtle technical condition on the dual stabilizers
\[
H_\xi = \{ h \in H : h^{-T} \xi = \xi \}~,\xi \in \mathcal{O}~.
\] We will use $H_0 \subset H$ to denote the connected component of the identity element in $H$. The fact that these notations clash for the zero element $0$ is immaterial for the following, since $0 \not\in \mathcal{O}$, hence the stabilizer of $0$ does not enter the discussion. 

Throughout the following, the condition $H_\xi \subset H_0$ will repeatedly occur, where $\xi$ is an arbitrary element of $\mathcal{O}$. This condition is independent of the choice of $\xi \in \mathcal{O}$. Further observations, and a more detailed discussion of the role of this condition in the context of coorbit equivalence can be found in Section 4 of \cite{FuKo_Coarse}.

We then have the following result, which is Theorem 4.16 from \cite{FuKo_Coarse}. 
\begin{theorem}\label{thm:OrbitMapQuasiIsometry}
Assume $H_\xi \subset H_0$. Let $W \subset H$ be a relatively compact, symmetric unit neighborhood with $W\subset H_0$. Furthermore, let $\calQ = (h_i^\invT Q)_{i\in I}$ be an induced connected covering of $\calO$ by $H$  with $\xi \in Q$,  for some open, relatively compact $Q \subset \mathcal{O}$. Then 
$$p_\xi:(H, d_W) \to (\calO, d_\calQ), h\mapsto h^\invT \xi$$
is a quasi-isometry. 
\end{theorem}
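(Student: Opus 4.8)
The plan is to verify the two defining properties of a quasi-isometry from Definition~\ref{def:quasi-isometry} separately, making essential use of the fact that $g \mapsto g^\invT$ is a group homomorphism, so that $p_\xi(gh) = g^\invT p_\xi(h)$ realizes the dual action as a left action. \emph{Coarse surjectivity} is then immediate and in fact free: since $\calO = H^\invT\xi$, the orbit map $p_\xi$ is onto, so the constant $K = 0$ works. Everything therefore reduces to the two-sided \emph{quasi-isometric embedding} estimate
\[
L^\inv d_W(h,g) - C \le d_\calQ(p_\xi(h),p_\xi(g)) \le L\, d_W(h,g) + C
\]
for suitable $L,C>0$ and all $h,g \in H$. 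Since the metrics here may take the value $\infty$, I would first check that the finite-distance classes match up: because $W$ is a unit neighborhood in the connected group $H_0$ we have $\bigcup_m W^m = H_0$, so $d_W(h,g)<\infty$ iff $h^\inv g \in H_0$; and since $\calQ$ consists of connected sets, $d_\calQ(x,y)<\infty$ iff $x,y$ lie in the same connected component of $\calO$, i.e.\ in the same $H_0$-orbit. The hypothesis $H_\xi \subset H_0$ is precisely what guarantees that $p_\xi(h)$ and $p_\xi(g)$ lie in the same $H_0$-orbit iff $h^\inv g \in H_0$, so that the two decompositions correspond under $p_\xi$ and it suffices to argue on a single component.

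For the \textbf{upper bound} I would convert a $W$-word into a $\calQ$-chain, the decisive ingredient being a \emph{uniform distortion estimate}: there is a constant $N_1$, independent of $a \in H$, with $d_\calQ(a^\invT x, a^\invT y) \le N_1\, d_\calQ(x,y)$ whenever the right-hand side is finite. By concatenation it suffices to treat $d_\calQ(x,y)\le 1$, i.e.\ $x,y$ in a common $Q_i = h_i^\invT Q$. Here the homomorphism property gives $a^\invT Q_i = (a h_i)^\invT Q$, and the key observation is that \emph{left} translation preserves the well-spread family with the \emph{same} constants, since $\bigcup_i (a h_i)U = aH = H$ and $(a h_i)V \cap (a h_j)V = \emptyset$ for $i \neq j$. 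Thus $U$-density yields $j$ with $a h_i \in h_j U$, whence $a^\invT Q_i = h_j^\invT u^\invT Q \subset h_j^\invT C$ for the fixed relatively compact set $C = \bigcup_{u \in U} u^\invT Q$; by \eqref{eqn:str_cov_norm} and local finiteness this dilate meets only uniformly boundedly many covering sets and lies in some $Q_j^{s*}$, giving $N_1 = 2s$. Writing $h^\inv g = w_1\cdots w_m$ with $w_k \in W$ and $m = d_W(h,g)$, the consecutive images $a_k^\invT\xi$ and $a_k^\invT(w_k^\invT\xi)$, where $a_k = h w_1\cdots w_{k-1}$, satisfy $d_\calQ(\xi,w_k^\invT\xi)\le R_0$ uniformly (as $W^\invT\xi$ is relatively compact), so each step costs at most $N_1 R_0$ and summation gives the upper bound with $C=0$.

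For the \textbf{lower bound} I would run the conversion in reverse, turning a $\calQ$-chain realizing $m = d_\calQ(p_\xi(h),p_\xi(g))$ into a $W$-word. This requires two translations between the covering and the group. First, adjacency $Q_{i_k}\cap Q_{i_{k+1}}\neq\emptyset$ forces, via \eqref{eqn:str_cov_norm} together with compactness of the dual stabilizer, that $h_{i_k}^\inv h_{i_{k+1}}$ lies in a fixed relatively compact subset $C_0 \subset H$; restricting to the common component (so these elements lie in $H_0$) and covering the relatively compact set $C_0 \cap H_0$ by finitely many powers of $W$ --- possible because $\bigcup_m W^m = H_0$ --- yields $h_{i_k}^\inv h_{i_{k+1}} \in W^t$ for a uniform $t$, hence $d_W(h_{i_1},h_{i_m}) \le t(m-1)$. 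Second, the endpoint relations $p_\xi(h)\in Q_{i_1}$ and $p_\xi(g)\in Q_{i_m}$ read $(h_{i_1}^\inv h)^\invT\xi \in Q$ and $(h_{i_m}^\inv g)^\invT\xi\in Q$, so by relative compactness of $Q$ and compactness of $H_\xi$ the elements $h_{i_1}^\inv h$ and $h_{i_m}^\inv g$ lie in a fixed relatively compact set, again absorbed into some $W^{t'}$. The triangle inequality for $d_W$ then gives $d_W(h,g) \le t\,m + 2t'$, completing the embedding estimate and hence the proof.

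The step I expect to be the main obstacle is the \textbf{uniform distortion estimate} in the upper bound, together with its dual in the lower bound, namely the passage from covering-adjacency to a uniformly bounded power $W^t$. Both amount to converting the combinatorial data of the structured admissible covering into uniform statements about relatively compact subsets of $H$, and the crucial inputs are the $U$-density and $V$-separation of the well-spread family, the structured-covering norm bound \eqref{eqn:str_cov_norm}, and the compactness of the dual stabilizer $H_\xi$. The condition $H_\xi \subset H_0$ is what allows all of these compactness-and-covering arguments to be carried out inside the single compactly generated group $H_0$, on which $W$ serves as a generating neighborhood; without it the matching of finite-distance classes, and hence both the coarse surjectivity and the word-metric estimates, would break down.
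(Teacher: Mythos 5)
The paper itself contains no proof of Theorem \ref{thm:OrbitMapQuasiIsometry}: it is imported verbatim as Theorem 4.16 of \cite{FuKo_Coarse}, so there is no internal argument to compare yours against. Judged on its own merits, your proposal is a correct reconstruction of the coarse-geometric argument this result rests on: exact surjectivity of the orbit map (so $K=0$), the matching of finite-distance classes via $H_\xi \subset H_0$ (the connected components of $\calO$ are precisely the $H_0$-orbits, and $H_0 H_\xi = H_0$), the upper bound by pushing a $W$-word through a uniform distortion estimate for the dual action, and the lower bound by converting $\calQ$-chains into $W$-words using compactness of the stabilizer. All four ingredients you isolate ($U$-density, $V$-separation, \eqref{eqn:str_cov_norm}, compactness of $H_\xi$) are indeed the ones needed.

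Two attributions deserve correction, though both are repairable with ingredients you name elsewhere. First, in the upper bound, the claim that $h_j^\invT C$ meets only uniformly boundedly many covering sets does not follow from \eqref{eqn:str_cov_norm} and local finiteness alone; what one actually uses is properness of the dual action, i.e.\ that $M := \{g \in H : g^\invT \overline{Q} \cap C \neq \emptyset\}$ is compact (this is where compactness of $H_\xi$ enters, not only in the lower bound), combined with $V$-separation: the indices $k$ with $Q_k \cap h_j^\invT C \neq \emptyset$ satisfy $h_j^{-1}h_k \in M$, and the disjoint translates $h_k V \subset h_j M V$ give a count bounded by the Haar measure ratio $\mu(MV)/\mu(V)$, uniformly in $j$. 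Second, to convert ``boundedly many intersecting sets'' into a bound on $d_\calQ$, the inclusion in $Q_j^{s*}$ is not the right intermediate claim (you never establish chain-connectivity of those sets to $Q_j$); instead use that $a^\invT Q_i = (ah_i)^\invT Q$ is \emph{connected}, so its intersection graph with the finitely many open covering sets meeting it is connected, whence any two of its points are joined by a $\calQ$-chain of length at most the uniform count. With these repairs the argument closes, and I would count the proposal as correct in substance.
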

 
With these observations in place, we can now formulate and prove the following characterization of compatible dilations. The theorem can be viewed as a rather natural analogue of Theorem \ref{thm:coorbit_equiv_dual_orbits}, and it is arguably the main result of our paper.

Before we formulate the theorem, we recall the definition of a \textit{word metric} on a locally compact group: Given an open symmetric neighborhood of the identity $W$ in such a group $G$, let $W^0 = \{ e_G \}$, and $W^n = \{ x_1 \cdot \ldots \cdot x_n : x_1,\ldots, x_n \in W$ for all $n \in \mathbb{N}$. Finally, define $x \not= y$
\[
d_W(x,y) = \min \{ n \in \mathbb{N} : x^{-1} y \in W^n \}~.
\] where by convention, $d_W(x,y) = \infty$ if $x^{-1} y \not\in \langle W \rangle$, the subgroup generated by $W$. In particular, if $G_0 < G$ denotes the connected component the identity element of $G$ and $W \subset G_0$ is an open symmetric neighborhood of the identity element, then $\langle W \rangle = G_0$, and $d_W(x,y) < \infty$ is equivalent to saying that $x$ and $y$ are in the same connected component of $G$. 

\begin{theorem} \label{thm:main_char_compatible_coorbit}
Let $H$ denote an admissible matrix group with open dual orbit $\mathcal{O}$. Assume that $H_\xi \subset H_0$ holds. Let $A \in {\rm GL}(d,\mathbb{R})$. Then the following are equivalent:
\begin{enumerate}
    \item[(a)] $A \in \mathcal{S}_{Co_H}$.
    \item[(b)] $H$ is coorbit equivalent to $AHA^{-1}$
    \item[(c)] For any covering $\mathcal{P}$ of $\mathcal{O}$ induced by $H$, $A^{-T} \in \mathcal{S}_\mathcal{P}$.
    \item[(d)] Let $W \subset H_0$ denote any open, symmetric unit neighborhood, and $d_W$ denote the associated word metric on $H$. Let $(p_{\xi})^{-1}: \mathcal{O} \to H$ denote an arbitrary right inverse to $p_\xi$. Then  $A^{-T} \mathcal{O} = \mathcal{O}$, and the map 
    \[
    \varphi_A : H \to H~,~ \varphi_A : h \mapsto (p_{\xi})^{-1} (A^{-T} h^{-T} .\xi) 
    \] is a quasi-isometry with respect to $d_W$.
\end{enumerate}
\end{theorem}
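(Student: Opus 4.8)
The plan is to make condition (c) the logical hub and to establish the three equivalences (a)$\Leftrightarrow$(c), (b)$\Leftrightarrow$(c) and (c)$\Leftrightarrow$(d), each of which isolates a single structural theorem from the preliminaries. Throughout I fix one induced connected covering $\mathcal{P}=(h_i^{-T}Q)_{i\in I}$ of $\mathcal{O}$. Since any two induced coverings are weakly equivalent, and since by Theorem \ref{thm:WeakEquivQuasiIso} together with Theorem \ref{thm:char_sym_group} membership in $\mathcal{S}_\mathcal{P}$ depends only on $\mathcal{O}$ and on the quasi-isometry type of $d_\mathcal{P}$, the quantifier \emph{for any covering $\mathcal{P}$} in (c) may be replaced by \emph{for this fixed $\mathcal{P}$}.

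For (a)$\Leftrightarrow$(c) the tool is the decomposition-space description of coorbit spaces. By Theorem \ref{thm:FourierIsoCoorbitDecSpaces} and Remark \ref{rem:ind_weight_intr}, for the constant weight the Fourier transform is an isomorphism $Co(L^{p,q}(\mathbb{R}^d\rtimes H))\to\mathcal{D}(\mathcal{P},L^p,\ell^q_u)$ with $u$ intrinsic. The elementary identity $\mathcal{F}(f\circ A^{-1})=|\det A|\,\widehat f\circ A^T$ shows that the space-side dilation $f\mapsto f\circ A^{-1}$ corresponds, up to the harmless factor $|\det A|$, to the frequency-side operator $g\mapsto g\circ A^T=g\circ(A^{-T})^{-1}$. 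Hence (a) is equivalent to the decomposition-space norm equivalence $\|g\circ(A^{-T})^{-1}\|_{\mathcal{D}(\mathcal{P},L^p,\ell^q_u)}\asymp\|g\|_{\mathcal{D}(\mathcal{P},L^p,\ell^q_u)}$, which is exactly the defining relation of $A^{-T}\in\mathcal{S}_\mathcal{P}$ in Definition \ref{defn:dec_compatible}, except that the coorbit side only ever supplies the single intrinsic weight $u$. Closing this gap is the delicate point, and it is where I would lean on rigidity: the intrinsic-weight equivalence at one admissible pair $(p,q)\neq(2,2)$ already forces $A^{-T}\mathcal{O}=\mathcal{O}$ via Theorem \ref{thm:different_freq_supp}, as well as weak equivalence of $\mathcal{P}$ and the image covering $A^{-T}\mathcal{P}=(A^{-T}P_i)_{i\in I}$ via Corollary \ref{cor:significance_weak_equivalence}; by Theorem \ref{thm:char_sym_group} this pair of conditions is precisely $A^{-T}\in\mathcal{S}_\mathcal{P}$, now with all moderate weights reinstated. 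The converse is immediate, since $A^{-T}\in\mathcal{S}_\mathcal{P}$ supplies in particular the intrinsic-weight equivalences, which transform back to (a). A minor technical point to watch is the passage from general $f\in L^2(\mathbb{R}^d)$ to the compactly supported smooth test functions on $\mathcal{O}$ underlying $\mathcal{S}_\mathcal{P}$; this is handled exactly as in the proof of Theorem \ref{thm:char_sym_group}, through the atomic decomposition of decomposition spaces.

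For (b)$\Leftrightarrow$(c) I would appeal to Theorem \ref{thm:coorbit_equiv_dual_orbits}. The dual action of $AHA^{-1}$ is the $A^{-T}$-conjugate of that of $H$, so its open dual orbit is $A^{-T}\mathcal{O}$ and an induced covering of it is $A^{-T}\mathcal{P}$, obtained from the well-spread family $(Ah_iA^{-1})_{i}$ and base set $A^{-T}Q$. Consequently (b) holds iff $A^{-T}\mathcal{O}=\mathcal{O}$ and $\mathcal{P}$, $A^{-T}\mathcal{P}$ are weakly equivalent. By Theorem \ref{thm:WeakEquivQuasiIso} the latter is the statement that $\mathrm{id}\colon(\mathcal{O},d_\mathcal{P})\to(\mathcal{O},d_{A^{-T}\mathcal{P}})$ is a quasi-isometry; since $d_{A^{-T}\mathcal{P}}(x,y)=d_\mathcal{P}(A^Tx,A^Ty)$, this is the same as asking that $z\mapsto A^Tz$, equivalently $z\mapsto A^{-T}z$, be a quasi-isometry of $(\mathcal{O},d_\mathcal{P})$. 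Together with $A^{-T}\mathcal{O}=\mathcal{O}$ this is exactly the characterization of $A^{-T}\in\mathcal{S}_\mathcal{P}$ furnished by Theorem \ref{thm:char_sym_group}, i.e. (c).

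Finally, (c)$\Leftrightarrow$(d) is pure coarse geometry. By Theorem \ref{thm:char_sym_group}, (c) amounts to $A^{-T}\mathcal{O}=\mathcal{O}$ together with the self-map $D\colon z\mapsto A^{-T}z$ being a quasi-isometry of $(\mathcal{O},d_\mathcal{P})$. The hypothesis $H_\xi\subset H_0$ lets us invoke Theorem \ref{thm:OrbitMapQuasiIsometry}, making $p_\xi\colon(H,d_W)\to(\mathcal{O},d_\mathcal{P})$ a quasi-isometry; any right inverse $(p_\xi)^{-1}$ is then automatically a quasi-inverse, and using $h^{-T}.\xi=p_\xi(h)$ the map of (d) is precisely the conjugate $\varphi_A=(p_\xi)^{-1}\circ D\circ p_\xi$. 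Since conjugating a self-map by a quasi-isometry preserves the quasi-isometry property, $\varphi_A$ is a $d_W$-quasi-isometry iff $D$ is a $d_\mathcal{P}$-quasi-isometry, giving (d). I expect the main obstacle of the whole argument to be the weight-and-domain bookkeeping in (a)$\Leftrightarrow$(c): converting the single constant coorbit weight into the full family of moderate decomposition weights demanded by $\mathcal{S}_\mathcal{P}$, for which rigidity (Corollary \ref{cor:significance_weak_equivalence}) and the density/atomic-decomposition arguments of Theorem \ref{thm:char_sym_group} are the essential tools; the two remaining equivalences are comparatively mechanical once the conjugate group's covering and the orbit-map quasi-isometry are in place.
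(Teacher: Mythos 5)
Your proposal is correct in substance, and two of its three legs coincide with the paper's own argument: the equivalence (c) $\Leftrightarrow$ (d) is proved exactly as in the paper (conjugating the dilation $\zeta \mapsto A^{-T}\zeta$ by the orbit-map quasi-isometry of Theorem \ref{thm:OrbitMapQuasiIsometry}), and your (a) $\Leftrightarrow$ (c) uses the same core tools (Theorem \ref{thm:FourierIsoCoorbitDecSpaces}, the dilation identity for $\mathcal{F}$, and a density/atomic-decomposition argument -- the paper runs the density step on the coorbit side via better vectors and Banach frames, you run it on the decomposition side, which is equivalent under the Fourier isomorphism). Where you genuinely diverge is the treatment of (b): the paper gets (a) $\Leftrightarrow$ (b) in one line from an external result (Lemma 44 of \cite{FuehrVoigtlWavCooSpaViewAsDecSpa}), whereas you prove (b) $\Leftrightarrow$ (c) internally, by computing that $AHA^{-1}$ has dual orbit $A^{-T}\mathcal{O}$ and induced covering $A^{-T}\mathcal{P}$, invoking Theorem \ref{thm:coorbit_equiv_dual_orbits}(d), and translating weak equivalence of $\mathcal{P}$ and $A^{-T}\mathcal{P}$ into the quasi-isometry condition of Theorem \ref{thm:char_sym_group}. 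Your route is longer but has two advantages: it stays within results stated in this paper, and it is more honest about a point the paper's proof of (a) $\Rightarrow$ (c) glosses over, namely that the coorbit hypothesis only delivers norm equivalences for the intrinsic weights $u_i \asymp |Q_i|^{1/q-1/2}$, while membership in $\mathcal{S}_{\mathcal{P}}$ demands all moderate weights; your detour through rigidity (Theorem \ref{thm:different_freq_supp}, Corollary \ref{cor:significance_weak_equivalence}) and the metric characterization of Theorem \ref{thm:char_sym_group} is exactly what closes that gap. Two small points to watch: when applying Corollary \ref{cor:significance_weak_equivalence}(c) you should fix $q=2$, $p \neq 2$ so that the intrinsic weight is constant (or cite Lemma \ref{cor:WeakEquivalenceOfInducedCoverings}, which tolerates arbitrary moderate weights); and your opening reduction of the quantifier in (c) to a single fixed covering leans on Theorems \ref{thm:WeakEquivQuasiIso} and \ref{thm:char_sym_group}, which require coverings by connected sets, while general induced coverings need not be connected -- this requires a weight-transfer argument across weakly equivalent coverings (or an explicit remark that the norm-equivalence chain in (a) $\Rightarrow$ (c) is covering-independent), though the paper itself is equally loose here.
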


\begin{proof}
The equivalence (a) $\Leftrightarrow$ (b) follows from \cite[Lemma 44]{FuehrVoigtlWavCooSpaViewAsDecSpa}, combined with the canonical embedding $Co_H(L^{p,q}(\mathbb{R}^d \rtimes H)) \subset L^2(\mathbb{R}^d)$.

\underline{Proof of (a) $\Rightarrow$ (c):} Pick $g \in C_c^\infty(\mathcal{O} \cap A^{-T} \mathcal{O})$. Then $\mathcal{F}^{-1}(g) \in L^2(\mathbb{R}^d)$ with $\mathcal{F}^{-1}(g \circ A^{-T}) = |\det(A)| \mathcal{F}^{-1}(g) \circ A$. Combining Theorem \ref{thm:FourierIsoCoorbitDecSpaces}  with the assumption $A \in \mathcal{S}_{Co_H}$ now implies the following chain of norm equivalences, for any covering $\mathcal{P}$ induced by $H$
\begin{eqnarray*}
\| g \circ A^{-T} \|_{\mathcal{D}(\mathcal{P},L^p,\ell^q)} & \asymp & \| \mathcal{F}^{-1} (g \circ A^{-T}) \|_{Co(L^{p,q}(\mathbb{R}^d \rtimes H))} \\
& \asymp &  \| \mathcal{F}^{-1} (g ) \|_{Co(L^{p,q}(\mathbb{R}^d \rtimes H))}  \\
& \asymp & \| g \|_{\mathcal{D}(\mathcal{P},L^p,\ell^q)}
\end{eqnarray*}
which shows that $A^{-T} \in \mathcal{S}_{\mathcal{P}}$.

\underline{Proof of (c) $\Rightarrow$ (a):} Asssume that $A^{-T} \in \mathcal{S}_\mathcal{P}$, for a covering $\mathcal{P}$ induced by $H$. Consider $1 \le p,q \le 2$. Then we get, for all $g \in \mathcal{F}^{-1}(C_c^\infty(\mathcal{O}))$, the chain of norm equivalences
\begin{eqnarray*}
\| g \circ A \|_{Co(L^{p,q}(\mathbb{R}^d \rtimes H))} & \asymp &  \| \mathcal{F}(g \circ A) \|_{\mathcal{D}(\mathcal{P},L^p,\ell^q)} \\ 
& \asymp &  \| \mathcal{F}(g) \circ A^{-T} \|_{\mathcal{D}(\mathcal{P},L^p,\ell^q)}  \\ 
& \asymp &  \| \mathcal{F}(g)  \|_{\mathcal{D}(\mathcal{P},L^p,\ell^q)} \\
& \asymp & \| g \|_{Co(L^{p,q}(\mathbb{R}^d \rtimes H))}~.
\end{eqnarray*}
Here the second-to-last equivalence was due to $A \in \mathcal{S}_{\mathcal{P}}$, and the remaining ones due to  Theorem \ref{thm:FourierIsoCoorbitDecSpaces}. 
Hence it remains to prove that this norm equivalence holds for all $g \in L^2(\mathbb{R}^d)$ (in the extended sense), which is achieved by using a density argument similar to the one used in the proof of Theorem \ref{thm:char_sym_group}. More precisely, picking any nonzero $\psi \in \mathcal{F}^{-1}(C_c^\infty(\mathcal{O}))$, Lemma 2.7 of \cite{FuehCooSpaAndWavCoeDecOveGenDilGro} implies that $\psi$ is a so-called \textbf{better vector} in the sense of \cite{FeiGroeBanSpaRelToIntGrpRepI}. By Theorem 6.1 of the cited paper, there exists a discrete family $((x_\kappa,h_\kappa))_{\kappa \in K} \subset \mathbb{R}^d \rtimes H$ such that the system $(\pi(x_\kappa,h_\kappa) \psi)_{\kappa \in K}$ is a Banach frame both for $L^2(\mathbb{R}^d)$ and for $Co(L^{p,q}(\mathbb{R}^d \rtimes H))$. 

This means that every $f \in L^2(\mathbb{R}^d)$ can be written as
\begin{equation} \label{eqn:atomic_coorbit}
f = \sum_{\kappa \in K} c_\kappa \pi(x_\kappa, h_\kappa) \psi~,
\end{equation} with unconditional convergence in $\| \cdot \|_2$~, but also in $\| \cdot \|_{Co(L^{p,q}(\mathbb{R}^d \rtimes H))}$, as soon as $f$ is contained in the latter, smaller space. By choice of $\psi$, the finite partial sums of (\ref{eqn:atomic_coorbit}) all lie in $\mathcal{F}^{-1}(C_c^\infty(\mathcal{O}))$. 

On the other hand, $A^{-T} \in \mathcal{S}_{\mathcal{P}}$ also entails $A^{-T} \mathcal{O} = \mathcal{O}$, and the finite partial sums of the expansion
\begin{equation} \label{eqn:atomic_coorbit_dil}
 f \circ A = \sum_{\kappa \in K} c_\kappa \pi(x_\kappa, h_\kappa) \psi \circ A 
\end{equation}
therefore also lie in $\mathcal{F}^{-1}(C_c^\infty(\mathcal{O})$. Hence if $f \in Co(L^{p,q}(\mathbb{R}^d \rtimes H))$, we obtain by the already established norm equivalence that (\ref{eqn:atomic_coorbit_dil}) converges also in $\| \cdot \|_{Co(L^{p,q}(\mathbb{R}^d \rtimes H))}$, finally leading to $f \circ A \in Co(L^{p,q}(\mathbb{R}^d \rtimes H))$. In addition, the norm equivalence valid for the partial sums extends (with identical constants) to the limit $f \circ A$. 

The converse inclusion follows by symmetry, observing that by Remark \ref{rem:SCo_subgroup} the argument can be applied with $A$ systematically replaced by $A^{-1}$. 

\underline{Proof of (c) $\Leftrightarrow$ (d)}: $A^{-T} \in \mathcal{S}_{\mathcal{P}}$ implies $A^{-T} \mathcal{O} = \mathcal{O}$, as well as the quasi-isometry property of
\[
\mathcal{O} \to \mathcal{O}~,~\zeta \mapsto A^{-T} \zeta~.
\] Suitably composing this map with the quasi-isometries $p_{\xi}$ and $(p_\xi)^{-1}$  (by \ref{thm:OrbitMapQuasiIsometry}) gives the map $\varphi_A$.

The converse direction is proved analogously. 
\end{proof}

The next corollary notes a simple application of $(a) \Longleftrightarrow (b)$. It can be seen as a natural extension of the inclusion $H \subset \mathcal{S}_{Co_H}$ from Remark \ref{rem:SCo_subgroup}.
\begin{corollary} \label{cor:normalizer}
Let $H<GL(\mathbb{R}^d)$ denote an admissible subgroup, and let  
\[
N_H = N_H(GL(\mathbb{R}^d)) = \{ A \in GL(\mathbb{R}^d): AHA^{-1} = H \}~,
\] the \textit{normalizer subgroup of $H$ in $GL(\mathbb{R}^d)$}. Then $N_H \subset \mathcal{S}_{Co_H}$. In particular, $\mathbb{R}^\ast \cdot I_d \subset \mathcal{S}_{Co_H}$.
\end{corollary}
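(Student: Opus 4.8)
The plan is to obtain the inclusion as an immediate consequence of the equivalence (a) $\Leftrightarrow$ (b) in Theorem \ref{thm:main_char_compatible_coorbit}, with no further computation required. Fix $A \in N_H$. By the very definition of the normalizer we have $AHA^{-1} = H$, so the two groups that enter condition (b), namely $H$ and $AHA^{-1}$, are literally equal. Consequently $H$ is coorbit equivalent to $AHA^{-1}$: the norm equivalence of Definition \ref{defn:coorbit_equivalent} holds with constant $1$ for every pair $0 < p,q \le \infty$ and every $f \in L^2(\mathbb{R}^d)$, simply because both sides are computed with respect to the same group. Thus condition (b) is satisfied, and the implication (b) $\Rightarrow$ (a) delivers $A \in \mathcal{S}_{Co_H}$. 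Since $A \in N_H$ was arbitrary, this proves $N_H \subset \mathcal{S}_{Co_H}$.

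One point deserves attention, and I would flag it explicitly to keep the corollary self-contained: the corollary assumes only that $H$ is admissible, whereas Theorem \ref{thm:main_char_compatible_coorbit} is stated under the additional hypothesis $H_\xi \subset H_0$. The resolution is that the argument uses \emph{exclusively} the equivalence (a) $\Leftrightarrow$ (b), and this particular equivalence was derived in the proof of that theorem from \cite[Lemma 44]{FuehrVoigtlWavCooSpaViewAsDecSpa} together with the canonical embedding $Co(L^{p,q}(\mathbb{R}^d \rtimes H)) \subset L^2(\mathbb{R}^d)$; neither ingredient invokes $H_\xi \subset H_0$. Hence (a) $\Leftrightarrow$ (b) is available for every admissible $H$, which is exactly the generality needed here. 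By contrast, I would avoid routing the proof through conditions (c) or (d), since those were tied to the quasi-isometry machinery of Theorem \ref{thm:OrbitMapQuasiIsometry}, whose hypotheses include $H_\xi \subset H_0$.

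For the final assertion it suffices to observe that every nonzero scalar matrix lies in the normalizer. Indeed, for $\lambda \in \mathbb{R}^\ast$ and any $h \in H$ one has $(\lambda I_d) h (\lambda I_d)^{-1} = h$, since scalar matrices are central in $GL(\mathbb{R}^d)$; therefore $(\lambda I_d) H (\lambda I_d)^{-1} = H$ and $\lambda I_d \in N_H$. The chain $\mathbb{R}^\ast \cdot I_d \subset N_H \subset \mathcal{S}_{Co_H}$ then follows at once. The proof presents no genuine obstacle; the only item requiring care is the matching of hypotheses discussed above, which is why I would cite the generality of the (a) $\Leftrightarrow$ (b) equivalence rather than the full statement of the theorem.
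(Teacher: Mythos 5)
Your proof is correct and takes exactly the route the paper intends: the corollary is presented as an immediate application of the equivalence (a) $\Leftrightarrow$ (b) of Theorem \ref{thm:main_char_compatible_coorbit}, applied with $AHA^{-1}=H$ for $A \in N_H$, and centrality of scalar matrices gives the final assertion. Your added observation that this equivalence rests only on \cite[Lemma 44]{FuehrVoigtlWavCooSpaViewAsDecSpa} and the canonical embedding into $L^2(\mathbb{R}^d)$, hence does not require the hypothesis $H_\xi \subset H_0$, is a careful clarification of a point the paper leaves implicit.
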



\section{A review of shearlet dilation groups}

In this section, we review the main structural features of shearlet dilation groups. In full generality,this class was first introduced and studied in \cite{FuRT}. A more recent overview with additional results can be found in \cite{AlbertiEtAl2017}.

We start out by presenting the two main examples of shearlet dilation groups, known prior to \cite{FuRT}
\begin{definition}[\cite{AlbertiEtAl2017}\label{def:StandardToeplitzGroups} Example 17. and Example 18.]\label{def:StandardToeplitzShearletGroups}
\begin{enumerate}[i)]
\item  For $\lambda=(\lambda_1, \ldots, \lambda_{d-1})\in \R^{d-1}$, we define the \textit{standard shearlet group in $d$-dimensions} $H^\lambda$ as the set
\begin{align*}
\Set{\epsilon\, \mathrm{diag}\left(a, a^{\lambda_1},\ldots, a^{\lambda_{d-1}}\right)
\begin{pmatrix}
1 & s_1 & \ldots & s_{d-1} \\
 & 1 & 0\ldots & 0			\\
 &  & \ddots & 	0	\\
 &  & & 1
\end{pmatrix}
|
\begin{array}{l}
a>0, \\
s_i\in \R,\\
\epsilon\in \Set{\pm 1}
\end{array}
}.
\end{align*}
\item For  $\delta\in \R$, we define the \textit{Toeplitz shearlet group in $d$-dimensions} $H^\delta$ as the set
\begin{align*}
\Set{\epsilon\,  \mathrm{diag}\left(a, a^{1-\delta},\ldots, a^{1-(d-1)\delta}\right)\cdot T(1, s_1, \ldots, s_{d-1})
|
\begin{array}{l}
a>0,\\
s_i\in \R,\\
\epsilon\in \Set{\pm 1}
\end{array}
},
\end{align*}
where the matrix $T(1, s_1, \ldots, s_{d-1})$ is defined by
\begin{align*}
T(1, s_1, \ldots, s_{d-1}):=
\begin{pmatrix}
1 & s_1 & s_2 & \ldots & \ldots & s_{d-1} \\
  & 1	& s_1 & s_2 & \ldots & s_{d-2}\\
  && \ddots & \ddots & \ddots & \\
  &&& 1 & s_1 & s_2\\
  &&&& 1 & s_1\\
  &&&&& 1
\end{pmatrix}.
\end{align*}
\end{enumerate}
\end{definition}

The presentation of more general shearlet dilation groups requires additional elementary notations from the area of (matrix) Lie groups. In the following, $\mathfrak{gl}(\R^d)$ denotes the set of all real $d \times d$-matrices. We let 
$$\exp: \mathfrak{gl}(\R^d) \to \mathrm{GL}(\R^d)$$
be the exponential map defined by
\[\exp(A) := \sum_{k=0}^{\infty}\frac{A^k}{k!}~.\]
Furthermore, we denote with $T(\R^d) \subset \mathrm{GL}(\R^d)$ the group of upper triangular $d\times d$-matrices with one on their diagonals. By definition, the Lie algebra of a closed subgroup $H\subset \mathrm{GL}(\R^d)$ is the set $\mathfrak{h}$ of all matrices $Y$ in $\mathfrak{gl}(\R^d)$ such that $\exp(tY)\in H$ for all $t\in \R$. 

The following definition was first formulated in \cite{FuRT}; see also \cite{AlbertiEtAl2017}.

\begin{definition}\label{defn:shearlet_dil}
Let $H\subset \mathrm{GL}(\R^d)$ be a closed, admissible dilation group. The group $H$ is called \textit{generalized shearlet dilation group} if there exist two closed subgroups 
$$S, D \subset \mathrm{GL}(\R^d)$$
such that
\begin{enumerate}[i)]
\item $S$ is a connected abelian subgroup of $T(\R^d)$;
\item $D=\Set{\exp(rY) | r\in \R}$ is a one-parameter group, where $Y\in \mathfrak{gl}(\R^d)$ is a diagonal matrix;
\item every $h\in H$ has a unique representation as $h=\pm ds$ for some $d\in D$ and $s\in S$.
\end{enumerate}
$S$ is called the \textit{shearing subgroup of $H$}\index{shearing subgroup}, $D$ is called the \textit{scaling subgroup of $H$}, and $Y$ is called the \textit{infinitesimal generator of $D$}.
\end{definition}

The article \cite{AlbertiEtAl2017} provides a systematic construction process for shearlet groups in arbitrary dimension that works by first selecting a shearing group $S$ and then determining conditions on the infinitesimal generator $Y$ of the scaling group $D$.

We denote the canonical basis of $\R^d$ with $e_1,\ldots, e_d$ and the identity matrix in $\mathrm{GL}(\R^d)$ with $I_d$. The next result contains information about the structure of shearing subgroups.

\begin{lemma}[\cite{AlbertiEtAl2017} Lemma 5. and Lemma 6.]\label{lem:CanonicalBasis}
Let $S$ be the shearing subgroup of a generalized shearlet dilation group $H\subset \mathrm{GL}(\R^d)$. Then the following statements hold:
\begin{enumerate}[i)]
\item There exists a unique basis $X_2,\ldots, X_d$ of the Lie algebra $\mathfrak{s}$ of $S$ with $X_i^Te_1=e_i$ for $2\leq i \leq d$, called the canonical basis of $\mathfrak{s}$.
\item We have $S=\Set{I_d + X | X\in \mathfrak{s}}$.
\item Let $\mathfrak{s}_k = {\rm span}\{X_j : j \ge k \}$, for $k \in \{ 2, \ldots, d \}$.  These are associative matrix algebras satisfying $\mathfrak{s}_k  \mathfrak{s}_\ell \subset \mathfrak{s}_{k+\ell-1}$, where we write  $\mathfrak{s}_m = \{ 0 \}$ for $m >d$.
\item $H$ is the inner semidirect product of the normal subgroup $S$ with the closed subgroup $D \cup -D$. 
\end{enumerate} 
\end{lemma}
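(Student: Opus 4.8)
\textbf{Proof plan for Lemma \ref{lem:CanonicalBasis}.}

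The plan is to establish the four statements largely in sequence, as each leans on the previous one, while exploiting the defining structural features of a generalized shearlet dilation group from Definition \ref{defn:shearlet_dil}. The basic arena is the Lie algebra $\mathfrak{s}$ of the shearing subgroup $S$, which by (i) of that definition is a connected abelian subgroup of the unipotent group $T(\R^d)$; hence $\mathfrak{s}$ consists of strictly upper triangular matrices, so every $X \in \mathfrak{s}$ is nilpotent and $\exp$ restricts to a bijection between $\mathfrak{s}$ and $S$ with polynomial (finite-sum) inverse $\log$.

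For statement (i), I would first observe that the map $\mathfrak{s} \to \R^{d-1}$, $X \mapsto (X^T e_1)_{2 \le i \le d}$, picking out the entries of the first row of $X$, is the relevant linear map to analyze. Since elements of $\mathfrak{s}$ are strictly upper triangular, $X^T e_1$ has vanishing first coordinate, so this map indeed lands in $\mathrm{span}\{e_2,\dots,e_d\}$. The crux is to show this map is a linear isomorphism, so that one can solve uniquely for a basis $X_2,\dots,X_d$ with $X_i^T e_1 = e_i$; this is where I expect the essential content of shearlet structure to enter, since injectivity of the "first-row" map on $\mathfrak{s}$ is exactly the algebraic hallmark of a shearing subalgebra (one cites the relevant structural input from \cite{AlbertiEtAl2017} or derives it from the admissibility and abelian hypotheses). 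Granting the isomorphism, existence and uniqueness of the canonical basis are immediate from linear algebra, and $\dim \mathfrak{s} = d-1$.

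Statement (ii) should follow from nilpotency: for $X \in \mathfrak{s}$ one has $\exp(X) = I_d + X + \tfrac{1}{2}X^2 + \cdots$, and the key point is that the higher-order terms $X^2, X^3, \dots$ lie in $\mathfrak{s}$ as well, so that $\exp(X) - I_d \in \mathfrak{s}$; combined with surjectivity of $\exp$ onto the connected group $S$ this yields $S = \{ I_d + X : X \in \mathfrak{s}\}$. The closure of $\mathfrak{s}$ under multiplication needed here is precisely what statement (iii) records in refined graded form, so I would actually prove (iii) first and feed it into (ii). For (iii), I would set $\mathfrak{s}_k = \mathrm{span}\{X_j : j \ge k\}$ and verify the grading $\mathfrak{s}_k \mathfrak{s}_\ell \subset \mathfrak{s}_{k+\ell-1}$ by a direct index computation on the canonical generators, reading off which row each product can be supported in; the abelian hypothesis forces the products to be symmetric and the triangular structure forces the support to shift upward by the stated amount, with products vanishing once the index exceeds $d$. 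That the $\mathfrak{s}_k$ are associative matrix algebras is then inherited from associativity of matrix multiplication together with the established closure.

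Finally, statement (iv) is the assertion that $H$ is the inner semidirect product of $S$ (normal) with $D \cup (-D)$. Here I would invoke the unique factorization $h = \pm d s$ from Definition \ref{defn:shearlet_dil}(iii) to get $H = (D \cup -D)\, S$ with trivial intersection, and then verify normality of $S$: since $D$ acts on $\mathfrak{s}$ by conjugation and $Y$ is diagonal, conjugation by elements of $D \cup -D$ preserves the strictly-upper-triangular algebra $\mathfrak{s}$, hence preserves $S = I_d + \mathfrak{s}$; the factor $-I_d$ acts trivially by conjugation, so no difficulty arises from the sign. The main obstacle throughout is the isomorphism claim underlying (i)—establishing that the first-row map is a bijection on $\mathfrak{s}$—since the remaining steps are essentially bookkeeping in the nilpotent graded algebra once that normalization is available.
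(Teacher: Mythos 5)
The paper offers no proof of this lemma at all: it is imported verbatim from \cite{AlbertiEtAl2017} (Lemmas 5 and 6). So your plan must stand on its own, and as written it has two genuine gaps.

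First, in (i) you correctly isolate the crux, namely that the first-row map $\mathfrak{s}\ni X\mapsto X^Te_1\in \mathrm{span}\{e_2,\ldots,e_d\}$ is a linear isomorphism, but you then defer exactly this point (``one cites the relevant structural input \ldots or derives it from the admissibility and abelian hypotheses''). Citing it is circular, since that structural input \emph{is} the statement under proof, and no derivation is sketched. The derivation is where admissibility enters: by Lemma \ref{lem:OrbitShearletGroups} the dual action of $H$ on $\mathcal{O}=\R^*\times\R^{d-1}$ is free with open orbit. Openness forces the differential at the identity of the orbit map $h\mapsto h^{-T}e_1$, i.e.\ the map $\mathfrak{h}=\R Y\oplus\mathfrak{s}\ni X\mapsto -X^Te_1$, to be surjective; since $Y^Te_1\in\R e_1$ while $X^Te_1$ has vanishing first coordinate for $X\in\mathfrak{s}$, the first-row map must be onto $\mathrm{span}\{e_2,\ldots,e_d\}$. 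Injectivity follows from freeness: if $X^Te_1=0$ then $\exp(tX)^{-T}e_1=e_1$ for all $t$, so $\exp(tX)$ lies in the trivial stabilizer and $X=0$. Without this argument nothing in (i), and hence nothing downstream, is established.

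Second, your treatment of (iii) is untenable both in its logical order and in its conclusion. The closure $\mathfrak{s}\,\mathfrak{s}\subset\mathfrak{s}$, without which the $\mathfrak{s}_k$ are not algebras at all, is not a ``direct index computation'': in the standard argument it is a \emph{consequence} of (ii), because $S=I_d+\mathfrak{s}$ being a group gives $(I_d+X)(I_d+Y)=I_d+(X+Y+XY)\in I_d+\mathfrak{s}$, hence $XY\in\mathfrak{s}$; and (ii) in turn rests on the orbit machinery behind (i). So the workable order is (i), (ii), (iii) — the reverse of your plan. Moreover, what triangularity plus commutativity actually yield, once closure and the first-row isomorphism are available, is only $\mathfrak{s}_k\mathfrak{s}_\ell\subset\mathfrak{s}_{\max(k,\ell)+1}$: the first row of $X_iX_j$ equals row $i$ of $X_j$, which vanishes in columns $\le i$, and one symmetrizes using $X_iX_j=X_jX_i$. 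The stronger bound $\mathfrak{s}_k\mathfrak{s}_\ell\subset\mathfrak{s}_{k+\ell-1}$ that you set out to prove cannot be obtained this way, and in fact fails in general: in dimension $d=4$, take $\mathfrak{s}=\mathrm{span}\{e_1e_2^T,\; e_1e_3^T+e_3e_4^T,\; e_1e_4^T\}$ and $Y=\mathrm{diag}(1,\lambda,1,1)$; one checks that $S=I_4+\mathfrak{s}$ is connected abelian, $D=\exp(\R Y)$ normalizes $S$, and $H=DS\cup(-DS)$ has free open dual orbit $\R^*\times\R^3$, so this is a bona fide generalized shearlet dilation group, yet its canonical basis satisfies $X_3^2=X_4\neq 0$, contradicting $\mathfrak{s}_3\mathfrak{s}_3\subset\mathfrak{s}_5=\{0\}$. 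The exponent in the statement should thus read $\max(k,\ell)+1$; that weaker form is all the rest of the paper uses (only the associative-algebra property of $\mathfrak{s}$ enters Lemma \ref{lem:group_vs_algebra} and the examples), and no bookkeeping will prove the printed version. Your (iv) is essentially fine, except that ``$D$ acts on $\mathfrak{s}$ by conjugation'' is itself the normality claim; it needs the one-line observation that conjugation by a diagonal matrix preserves the unit diagonal, so the unique factorization $h=\pm ds$ forces $dsd^{-1}\in S$.
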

%
%

Every generalized shearlet dilation group $H$ is admissible, and the next result shows that all of them share the same dual orbit. This ensures that one of the basic conditions for coorbit equivalence is already fulfilled. The following result was also first obtained in \cite{FuRT}.

\begin{lemma}\label{lem:OrbitShearletGroups}
Let $S\subset T(\R^d)$ be a shearing subgroup and $D$ be a compatible scaling subgroup such that 
$$H= DS\cup (-DS)$$
is a generalized shearlet dilation group. Then the unique open dual orbit of $H$ is given by $\mathcal{O}=\R^* \times \R^{d-1}$, and the dual action of $H$ on $\mathcal{O}$ is free.
\end{lemma}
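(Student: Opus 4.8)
The plan is to prove Lemma \ref{lem:OrbitShearletGroups} by directly computing the dual action of $H$ on $\R^d$ and identifying the orbit structure. The dual action is given by $(h,\xi) \mapsto h^{-T}\xi$, so I first need to understand the transpose-inverses of elements of $H$. Since every $h \in H$ has the form $h = \pm ds$ with $d = \exp(rY) \in D$ diagonal and $s \in S \subset T(\R^d)$ upper triangular with ones on the diagonal, we have $h^{-T} = \pm d^{-1} s^{-T}$, where $d^{-1}$ is again diagonal (with entries the reciprocals of those of $d$) and $s^{-T}$ is \emph{lower} triangular with ones on the diagonal. The key structural observation is that lower-triangular unipotent matrices fix the first coordinate direction in a controlled way: for $\xi = (\xi_1, \ldots, \xi_d)^T$, the product $s^{-T}\xi$ has first coordinate exactly $\xi_1$, since the first row of $s^{-T}$ is $(1, 0, \ldots, 0)$.

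First I would fix the candidate base point $\xi_0 = e_1 = (1,0,\ldots,0)^T$ and compute its orbit. Applying $s^{-T}$ to $e_1$ yields a vector whose first entry is $1$ and whose remaining entries sweep out all of $\R^{d-1}$ as $s$ ranges over $S$; this surjectivity onto $\{1\}\times\R^{d-1}$ should follow from Lemma \ref{lem:CanonicalBasis}(i), which provides the canonical basis $X_2, \ldots, X_d$ of $\mathfrak{s}$ with $X_i^T e_1 = e_i$, so that the map $s \mapsto s^{-T}e_1$ realizes an affine parametrization of the hyperplane. Next, applying the diagonal scaling $d^{-1} = \mathrm{diag}(a^{-1}, \ldots)$ scales the first coordinate by $a^{-1}$, which ranges over all of $\R^{>0}$ as $a$ ranges over $\R^{>0}$, and the sign $\epsilon = \pm 1$ flips it; together these produce every nonzero value in the first coordinate. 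Combining the shearing and scaling actions, and checking that the off-diagonal scaling entries keep the remaining coordinates filling out $\R^{d-1}$, I would conclude $\mathcal{O} = H^{-T} e_1 = \R^* \times \R^{d-1}$.

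To establish that the action is \emph{free}, suppose $h^{-T}\xi = \xi$ for some $\xi \in \mathcal{O}$; I would show $h = I_d$. Writing $h^{-T} = \epsilon\, d^{-1} s^{-T}$ and examining the first coordinate, the fixed-point equation forces $\epsilon a^{-1} \xi_1 = \xi_1$ with $\xi_1 \neq 0$, hence $\epsilon a^{-1} = 1$, which gives $\epsilon = 1$ and $a = 1$, so $d = I_d$ and the sign is trivial. The equation then reduces to $s^{-T}\xi = \xi$, i.e. $s^{-T}$ fixes a vector with nonzero first entry. Using the triangular structure and proceeding coordinate by coordinate (or invoking the faithfulness of the action of $S$ on this vector via the canonical basis), I would deduce $s = I_d$, so $h = I_d$. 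Equivalently, freeness follows once I show the stabilizer $H_\xi$ is trivial, matching the admissibility framework recalled before the lemma.

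The main obstacle I anticipate is the bookkeeping in the surjectivity argument for the shearing part, namely showing that $s \mapsto s^{-T}e_1$ covers the full hyperplane $\{1\}\times\R^{d-1}$ and, more delicately, that after composing with the diagonal scaling the combined map is onto $\R^*\times\R^{d-1}$ rather than a proper subset. This requires carefully tracking how the nonlinear diagonal entries $a^{\lambda_i}$ (in the general, not just standard, case) interact with the affine shearing coordinates, and verifying that the exponents do not conspire to collapse the image. I expect Lemma \ref{lem:CanonicalBasis} to do most of the heavy lifting here, since it pins down the shearing algebra precisely; the remaining work is to confirm that the scaling subgroup, being a one-parameter diagonal group with a nonzero first diagonal entry in its infinitesimal generator $Y$, acts transitively on the first coordinate and compatibly on the rest.
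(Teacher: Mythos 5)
The paper itself contains no proof of this lemma---it is quoted from \cite{FuRT}---so your proposal can only be measured against the natural direct argument. Your orbit computation is correct and complete: writing $h=\epsilon \exp(rY)s$ with $s^{-1}=I_d+\sum_{i=2}^d t_iX_i$, Lemma \ref{lem:CanonicalBasis} gives $s^{-T}e_1=(1,t_2,\dots,t_d)^T$, hence
\[
h^{-T}e_1=\epsilon\bigl(e^{-r},\,e^{-r\lambda_2}t_2,\,\dots,\,e^{-r\lambda_d}t_d\bigr)^T,
\]
and since every $e^{-r\lambda_i}$ is positive, the last $d-1$ coordinates fill out all of $\R^{d-1}$ for each fixed $r,\epsilon$; the ``conspiracy of exponents'' you worry about cannot occur. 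One line you should add: uniqueness of the open orbit is immediate because any other orbit lies in $\{0\}\times\R^{d-1}$, which has empty interior.

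The genuine gap is in the freeness argument at a general point $\xi$. Your reduction to $\epsilon=1$ and $\exp(rY)=I_d$ via the first coordinate is fine, but the remaining step---that $s^{-T}\xi=\xi$ with $\xi_1\neq0$ forces $s=I_d$---does \emph{not} follow from ``the triangular structure, proceeding coordinate by coordinate'', which is all you invoke. Writing $s^{-1}=I_d+X$ with $X\in\mathfrak{s}$, the claim is that $X^T\xi=0$ implies $X=0$. This is false for general strictly upper triangular $X$, even ones satisfying the canonical-basis normalization: for
\[
X=\begin{pmatrix}0&0&1\\0&0&b\\0&0&0\end{pmatrix},\qquad b\neq0,\qquad \xi=(1,-1/b,0)^T,
\]
one has $X^Te_1=e_3$ and $X^T\xi=0$, so the unipotent matrix $s=I_3+X$ satisfies $s^T\xi=\xi$ and hence $s^{-T}\xi=\xi$, although $\xi_1\neq0$ and $s\neq I_3$. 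What excludes such an $X$ from an actual shearing algebra is precisely the algebra structure of $\mathfrak{s}$ (here, abelianness forces $b=0$); in general one needs the grading of Lemma \ref{lem:CanonicalBasis}, not mere triangularity, and your parenthetical appeal to ``faithfulness of the action of $S$ on this vector'' is the very statement to be proven, not an argument for it.

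The cleanest repair avoids general $\xi$ altogether. Your own computation already shows that the stabilizer of the single point $e_1$ is trivial: $\epsilon e^{-r}=1$ gives $\epsilon=1$ and $r=0$, and then $(1,t_2,\dots,t_d)^T=e_1$ gives $t=0$ directly from the parametrization, with no induction needed. Since $\calO$ is a single $H$-orbit, stabilizers are conjugate along it: if $\xi=h_0^{-T}e_1$, then $H_\xi=h_0H_{e_1}h_0^{-1}=\{I_d\}$. Alternatively, stay inside $S$: the group $S$ acts simply transitively on each slice $\{c\}\times\R^{d-1}$ with $c\neq0$, so writing $\xi=s_0^{-T}(\xi_1e_1)$ and using that $S$ is abelian, the stabilizer of $\xi$ in $S$ coincides with that of $\xi_1e_1$, which is trivial. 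Either one-line patch makes your proof complete.
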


\section{Characterizing coorbit compatible dilations for general shearlet dilation groups}

\label{sect:SCo_shearlet}
\subsection{The symmetries of the dual orbit}

Throughout this section, we let $\mathcal{O} = \mathbb{R}^* \times \mathbb{R}^{d-1}$, the dual orbit of a shearlet dilation group in dimension $d-1$. We let 
\[ \mathcal{S}(\mathcal{O}) = \{ A \in {\rm GL}(d,\mathbb{R}) : A^{-T} \mathcal{O} = \mathcal{O} \} ~,\]
then Theorem \ref{thm:main_char_compatible_coorbit} implies 
\[
\mathcal{S}_{Co_H} \subset \mathcal{S}(\mathcal{O})~.
\]
The next lemma characterizes the matrices in $\mathcal{S}(\mathcal{O})$ and notes some basic formulae that will be helpful for computations.

\begin{lemma} \label{lem:struct_SO}
Let $\mathcal{O} = \mathbb{R}^* \times \mathbb{R}^{d-1}$, and $A \in {\rm GL}(d,\mathbb{R})$. 
\begin{enumerate}
    \item[(a)] $A \in \mathcal{S}(\mathcal{O})$ holds iff
    \[
     A = \left( \begin{array}{cc} \lambda & z \\ \mathbf{0} & B \end{array} \right) 
    \] with $\lambda \in \mathbb{R}^*$, $z \in \mathbb{R}^{1\times (d-1)}$ and $B \in {\rm GL}(d-1,\mathbb{R})$. 
    \item[(b)] Given two matrices $A, A' \in \mathcal{S}(\mathcal{O})$, i.e., 
    \[
    A = \left( \begin{array}{cc} \lambda & z \\ \mathbf{0} & B \end{array} \right) ~,~ A' = \left( \begin{array}{cc} \lambda' & z' \\ \mathbf{0} & B' \end{array} \right) 
    \] then 
    \begin{equation} \label{eqn:prod_SO}
     AA' = \left( \begin{array}{cc} \lambda  \lambda' & zB' + \lambda z' \\ \mathbf{0} & BB' \end{array} \right) 
    \end{equation} and 
    \begin{equation} \label{eqn:inv_SO}
     A^{-1} = \left( \begin{array}{cc} \lambda^{-1} & -\lambda^{-1} z B^{-1} \\ \mathbf{0} & B^{-1} \end{array} \right) 
    \end{equation}
\end{enumerate}
\end{lemma}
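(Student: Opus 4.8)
The plan is to establish part (a) first, since part (b) then reduces to routine block arithmetic. The heart of (a) is to reinterpret the defining condition $A^{-T}\mathcal{O} = \mathcal{O}$ as a condition on a single coordinate hyperplane. Writing $M = A^{-T}$ and using that $M$ is a linear bijection, the equality $M\mathcal{O} = \mathcal{O}$ is equivalent to $M$ preserving the complement $\mathbb{R}^d \setminus \mathcal{O} = \{ \xi \in \mathbb{R}^d : \xi_1 = 0 \}$, which is precisely the hyperplane $V := e_1^\perp$ (for the standard inner product). By invertibility together with a dimension count, $M(V) = V$ is in turn equivalent to the inclusion $M(V) \subset V$.

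Next I would translate $M(V) \subset V$ into an eigenvector condition on $A$. For $\xi \in V$ one has $(M\xi)_1 = \langle e_1, M\xi\rangle = \langle M^T e_1, \xi\rangle$, so $M(V) \subset V$ holds iff $M^T e_1$ is orthogonal to all of $V = e_1^\perp$, i.e. iff $M^T e_1 \in (e_1^\perp)^\perp = \mathbb{R}e_1$. Since $M^T = A^{-1}$ is invertible, this reads $A^{-1}e_1 \in \mathbb{R}^* e_1$, which is equivalent to $A e_1 \in \mathbb{R}^* e_1$. The latter says exactly that the first column of $A$ equals $\lambda e_1$ for some $\lambda \in \mathbb{R}^*$, i.e. that $A$ has the asserted block form $A = \begin{pmatrix} \lambda & z \\ \mathbf{0} & B\end{pmatrix}$. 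The invertibility of $B$ follows from $\det A = \lambda \det B \neq 0$ combined with $\lambda \neq 0$, and conversely any matrix of this shape with $\lambda \in \mathbb{R}^*$ and $B \in {\rm GL}(d-1,\mathbb{R})$ lies in $\mathcal{S}(\mathcal{O})$ by reversing the argument.

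For part (b), both identities are verified by direct multiplication of $2 \times 2$ block matrices: computing $AA'$ gives diagonal blocks $\lambda\lambda'$ and $BB'$ and upper-right block $\lambda z' + z B'$, while the inverse is found by solving $A\, A^{-1} = I_d$ blockwise, yielding $(1,1)$-entry $\lambda^{-1}$, lower block $B^{-1}$, and upper-right block $-\lambda^{-1} z B^{-1}$. I expect the only genuinely substantive step to be the geometric reduction in (a): the passage from set-theoretic invariance of $\mathcal{O}$ under $A^{-T}$ to an eigenvector condition on $A$, where one must carefully track the two transposes so as to convert $A^{-T}$ back into a statement about $A$ itself. Everything after that is bookkeeping with block matrices.
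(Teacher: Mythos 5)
Your proposal is correct and takes essentially the same route as the paper: both proofs reduce part (a) to the eigenvector condition $Ae_1 \in \mathbb{R}^* e_1$ via a duality argument (the paper characterizes $\mathcal{O}$ as $\{\xi : \langle \xi, e_1\rangle \neq 0\}$ and uses the subgroup property to trade $A^{-T}$ for $A^T$, while you pass through invariance of the complementary hyperplane $e_1^\perp$ under $A^{-T}$ and then undo the transposes -- the same computation in slightly different clothing), and part (b) is identical block-matrix bookkeeping in both. No gaps; the dimension-count step $M(V)\subset V \Leftrightarrow M(V)=V$ and the complement-preservation step are both justified correctly.
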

\begin{proof}
For part (a), we first observe that the subgroup property of $\mathcal{S}(\mathcal{O})$ implies that $A \in \mathcal{S}(\mathcal{O})$ is equivalent to $A^T \mathcal{O} = \mathcal{O}$. With $\xi_0 = (1,0,\ldots,0)^T$, we note that $\xi \in \mathcal{O} $ iff $\langle \xi,\xi_0 \rangle \not= 0$. Hence we get for $A \in GL(\mathbb{R}^d)$ the following equivalences: 
\begin{eqnarray*}
A \in \mathcal{S}(\mathcal{O})  & \Longleftrightarrow & A^T \mathcal{O} \subset \mathcal{O} \\
   & \Longleftrightarrow & \forall \xi \in \mathbb{R}^d ~:~ \left(\langle \xi,\xi_0 \rangle \not= 0  \Rightarrow
   \langle A^T \xi, \xi_0 \rangle \not= 0 \right) \\
    & \Longleftrightarrow & \forall \xi \in \mathbb{R}^d ~:~ \left(\langle \xi,\xi_0 \rangle \not= 0  \Rightarrow
   \langle  \xi, A \xi_0 \rangle \not= 0 \right) \\
    & \Longleftrightarrow & A \xi_0 = \lambda \xi_0~, \lambda \not= 0 ~. 
\end{eqnarray*}
Here $\lambda$ is the scalar from part (a), and the existence of $z,B$ as in (a) follows immediately. 
Equation (\ref{eqn:prod_SO}) is a special instance of block matrix calculus, and can be employed to solve the equation $AA' = I_d$ to obtain $A' = A^{-1}$ in equation (\ref{eqn:inv_SO}). 
\end{proof}

The next lemma reduces the general problem of characterizing $\mathcal{S}_{Co_H}$ within $\mathcal{S}(\mathcal{O})$ to the characterization within a smaller subgroup. The subgroup in question is defined as 
  \[
    \mathcal{S}_1(\mathcal{O}) = \left\{ \left( \begin{array}{cc} 1 & \mathbf{0} \\ \mathbf{0} & B \end{array} \right)  : B \in {\rm GL}(d-1,\mathbb{R}) \right\}~.
\]

\begin{lemma} \label{lem:red_to_S1}
Let $A \in \mathcal{S}(\mathcal{O})$. Let $S \subset {\rm GL}(d,\mathbb{R})$ denote the shearing subgroup of a shearlet dilation group $H$. Then $A$ factors uniquely as
\[
A = \lambda \cdot h \cdot A_1~,\lambda \in \mathbb{R}^*~,~h \in S~,~A_1 \in \mathcal{S}_1(\mathcal{O}).  
\] One has the equivalence
\[
A \in \mathcal{S}_{Co_H} \Longleftrightarrow A_1 \in \mathcal{S}_{Co_H} ~. 
\]
\end{lemma}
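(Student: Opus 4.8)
The plan is to treat the two assertions of the lemma separately: first I would establish existence and uniqueness of the factorization, exploiting the block structure of $\mathcal{S}(\mathcal{O})$ from Lemma \ref{lem:struct_SO} together with the parametrization of $S$ furnished by Lemma \ref{lem:CanonicalBasis}, and then I would deduce the equivalence $A \in \mathcal{S}_{Co_H} \Leftrightarrow A_1 \in \mathcal{S}_{Co_H}$ purely from the group structure of $\mathcal{S}_{Co_H}$ and the known inclusions $H \subset \mathcal{S}_{Co_H}$ and $\mathbb{R}^* I_d \subset \mathcal{S}_{Co_H}$.

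For the factorization I would start from the block form $A = \left( \begin{smallmatrix} \lambda & z \\ \mathbf{0} & B \end{smallmatrix} \right)$ of Lemma \ref{lem:struct_SO}(a). The scalar $\lambda$ is forced to be the top-left entry of $A$, so only the choice of $h \in S$ remains. The key structural input is that every $h \in S$ is upper unitriangular (since $S \subset T(\R^d)$), hence of the form $\left( \begin{smallmatrix} 1 & w \\ \mathbf{0} & C \end{smallmatrix} \right)$ with $C$ an upper-unitriangular, hence invertible, lower block, and — crucially — that by Lemma \ref{lem:CanonicalBasis}(i),(ii) the map sending $h = I_d + \sum_{i=2}^d c_i X_i \in S$ to its off-diagonal first row $w = (c_2,\ldots,c_d)$ is a bijection $S \to \mathbb{R}^{1\times(d-1)}$; this is because the relation $X_i^T e_1 = e_i$ makes the first row of $X_i$ equal to $e_i^T$, so the map is precisely the coordinate map in the canonical basis. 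Existence then follows by choosing $h$ so that the off-diagonal first row of $h^{-1}\lambda^{-1}A$ vanishes: a short block computation shows this amounts to prescribing the off-diagonal first row of $h^{-1}$ to equal $-zB^{-1}$, which is uniquely realizable by the bijection above. Setting $A_1 := h^{-1}\lambda^{-1}A$ yields an element of $\mathcal{S}_1(\mathcal{O})$, whose lower block is a product of invertible matrices and hence lies in ${\rm GL}(d-1,\mathbb{R})$. For uniqueness I would compare top-left entries (forcing equality of the scalars) and then observe that $S \cap \mathcal{S}_1(\mathcal{O}) = \{ I_d \}$ — again a direct consequence of the bijection, since an element of $S$ with vanishing off-diagonal first row must be $I_d$.

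For the equivalence, the factorization exhibits $A = (\lambda I_d)\, h\, A_1$ as the product of $A_1$ with the two matrices $\lambda I_d$ and $h$, both of which already lie in $\mathcal{S}_{Co_H}$: indeed $\lambda I_d \in \mathbb{R}^* I_d \subset \mathcal{S}_{Co_H}$ by Corollary \ref{cor:normalizer}, and $h \in S \subset H \subset \mathcal{S}_{Co_H}$ by Remark \ref{rem:SCo_subgroup}. Since $\mathcal{S}_{Co_H}$ is a group (Remark \ref{rem:SCo_subgroup}), left multiplication by the element $g := (\lambda I_d)h \in \mathcal{S}_{Co_H}$ and by its inverse preserves membership; writing $A = g A_1$ and $A_1 = g^{-1} A$ then gives $A \in \mathcal{S}_{Co_H} \Leftrightarrow A_1 \in \mathcal{S}_{Co_H}$ at once.

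The only genuinely substantive step is the factorization, and within it the bijectivity of the first-row parametrization of $S$; everything else is block-matrix bookkeeping and the formal group-theoretic inclusions already recorded in the excerpt. I do not anticipate a real obstacle, but I would take care to verify that the lower blocks remain invertible, so that $A_1$ truly lands in $\mathcal{S}_1(\mathcal{O})$, and that $S \cap \mathcal{S}_1(\mathcal{O})$ is trivial, since the uniqueness claim rests on the latter.
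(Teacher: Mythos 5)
Your proof is correct, and its skeleton coincides with the paper's: factor out the scalar $\lambda$, choose a shear $h \in S$ that annihilates the off-diagonal first row, and then deduce the equivalence from the group property of $\mathcal{S}_{Co_H}$ together with $\mathbb{R}^* I_d \subset \mathcal{S}_{Co_H}$ (Corollary \ref{cor:normalizer}) and $S \subset H \subset \mathcal{S}_{Co_H}$ (Remark \ref{rem:SCo_subgroup}). Where you genuinely differ is the key input used to produce, and uniquely determine, the shear. The paper invokes surjectivity of the orbit map $p_{\xi_0}$ (i.e.\ transitivity of the dual action, Lemma \ref{lem:OrbitShearletGroups}): it picks $h \in H$ with $h^{-T}\xi_0 = (1,z'')^T$ for the appropriate $z''$, and then argues that the upper-left entry $1$ forces $h \in S$. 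You instead use the canonical basis of $\mathfrak{s}$ (Lemma \ref{lem:CanonicalBasis}), observing that $X_i^T e_1 = e_i$ makes the map sending $h \in S$ to its off-diagonal first row a bijection $S \to \mathbb{R}^{1\times(d-1)}$, and your block computation correctly identifies the required first row of $h^{-1}$ as $-zB^{-1}$, consistent with the paper's $z'' = -z'(B')^{-1}$. Your route is more elementary, being purely linear-algebraic with no appeal to the dual action, and it pays an extra dividend: the same bijection yields $S \cap \mathcal{S}_1(\mathcal{O}) = \{I_d\}$, from which you derive uniqueness of the factorization explicitly, whereas the paper's proof leaves uniqueness essentially implicit (there it would follow from freeness of the dual action, also recorded in Lemma \ref{lem:OrbitShearletGroups}). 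The paper's variant has the marginal advantage of working directly with the orbit map $p_{\xi_0}$, which is the object that recurs in the subsequent coorbit-theoretic arguments, but both proofs are complete and rigorous.
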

\begin{proof}
We can clearly write $A = \lambda \cdot A'$ uniquely, where 
\[ A' = \left( \begin{array}{cc} 1 & z' \\ \mathbf{0} & B' \end{array} \right)~.  \]
Define $z'' = - z' (B')^{-1}$. Since the orbit map $p_{\xi_0}: h \to h^{-T} \xi_0 \in \mathcal{O}$ is onto,  there exists $h \in H$ with $h^{-T} \xi_0 = (1,z'')^T$, which is equivalent to 
\[
 h^{-1} =  \left( \begin{array}{cc} 1 & z'' \\ \mathbf{0} & B'' \end{array} \right)~.
\] for a suitable invertible matrix $B''$. Note that the entry $1$ in the upper left corner entails $h \in S$. Now the choice of $z''$ and part (b) of Lemma \ref{lem:struct_SO} entail that 
\[
 h^{-1} A' =  \left( \begin{array}{cc} 1 & 0 \\ \mathbf{0} &  B'' B' \end{array} \right) = A_1 \in \mathcal{S}_1(\mathcal{O}),
\] and we have shown the desired factorization 
\[
A = \lambda \cdot h \cdot A_1~.
\] By Corollary \ref{cor:normalizer} and Remark \ref{rem:SCo_subgroup} respectively, the first two factors are contained in $\mathcal{S}_{Co_H}$. Hence the product is in $\mathcal{S}_{Co_H}$ iff $A_1$ is.  
\end{proof}

\begin{remark}
\label{rem:diagonal_compatible_dils}
Let $S$ denote a shearing subgroup $S$, and let  $Y = {\rm diag}(1,\lambda_2,\ldots,\lambda_d)$, and $D = \exp(\mathbb{R} Y)$.  Following \cite{AlbertiEtAl2017}, we call $Y$ \textbf{compatible with $S$} iff $H = DS \cup -DS$ is a generalized shearlet dilation group. We stress that this notion of compatibility  is distinct from \textit{coorbit} compatibility, as formulated in Definition \ref{defn:coorbit_compatible}. In fact, as observed in \cite{AlbertiEtAl2017}, compatible matrices are characterized by the condition that $\exp(\mathbb{R} Y)$ normalizes $S$. This can be slightly rewritten as follows: If we define ${\rm Diag}(d) < {\rm GL}(\mathbb{R}^d)$ as the diagonal subgroup of $GL(\mathbb{R}^d)$, we obtain that   
\[
Y \mbox{ compatible with } S \Leftrightarrow \exp(\mathbb{R} Y) \subset N_S \cap {\rm Diag}(d)~. 
\] The benefit of this observation is the following: Given two dilation subgroups $D,D'$ that are both compatible with the same shearing subgroup $S$, then $D' \subset N_H \subset \mathcal{S}_{Co_H}$, where $H = DS \cup -DS$. To see this, observe that $D,D'$ commute elementwise, since they both consist of diagonal matrices, hence the elements of $D'$ normalize $H$. 

In other words, elements of \textit{dilation groups compatible with $S$} are \textit{coorbit compatible} with every shearlet dilation group having $S$ as shearing subgroup.  As a consequence, if $H,H'$ are different shearlet dilation groups sharing the same shearing subgroup, then $H' \subset \mathcal{S}_{Co_H}$.
\end{remark}

\subsection{Characterizing compatible dilations within $\mathcal{S}_1(\mathcal{O})$}

In order to enable concrete computations, we now introduce coordinates to the shearlet dilation group $H$. 
Recall that we have $H = DS \cup (-DS)$. The associated Lie algebra is $\mathfrak{h} = \mathbb{R} \cdot Y \oplus \mathfrak{s}$, with $Y$ the infinitesimal generator of the scaling subgroup $D$, and $\mathfrak{s}$ the Lie algebra of the shearing subgroup $S$. By Proposition 7 of \cite{AlbertiEtAl2017}, we can normalize $Y$ so that 
\[
 Y = {\rm diag}(1,\lambda_2,\ldots,\lambda_d)~.
\]
We use $\xi_0 = (1,0,\ldots,0) \in \mathcal{O}$. We parameterize group elements of $H$ as follows: Given $r \in \mathbb{R}$ and $t = (t_2,\ldots,t_d) \in \mathbb{R}^{d-1}$, we let 
\begin{eqnarray} \label{eqn:def_h}
 h(r,t) & = & \exp(-rY) \left(I_d+ \sum_{j=2}^d t_j X_j\right)^{-1} \in H~.~
\end{eqnarray} Note that $D = \{ h(r,0) : r \in \mathbb{R} \}$ and $ S = \{ h(0,t) : t \in \mathbb{R}^{d-1} \}$. 
Note further that this only parameterizes $DS$, and not all of $H$, but this will be sufficient for the coming arguments. 

We can then compute 
\[
 p_{\xi_0}^H (h(r,t)) = \exp(rY) (I_d+ \sum_{j=2}^d t_j X_j^T) \left( \begin{array}{c} 1 \\ 0 \\ \vdots \\ 0 \end{array} \right) =  \left( \begin{array}{c} e^r \\ e^{\lambda_2r} t_2 \\ \vdots \\ e^{\lambda_d r} t_d \end{array} \right) ~.
\]

The next lemma exhibits an explicit formula for the crucial map $\varphi_A : H \to H$, for $A \in \mathcal{S}_1(\mathcal{O})$, with respect to the newly introduced coordinates. 
\begin{lemma} \label{lem:compute_phiA_concr}
Let 
\[
A =  \left( \begin{array}{cc} 1 & \mathbf{0} \\ \mathbf{0} & B \end{array} \right) \in \mathcal{S}_1(\mathcal{O})~, 
\] and 
\[
\varphi_A : H \to H~,~ \varphi_A (h) = (p_{\xi_0})^{-1} (A^{-T} h^{-T} \xi_0)~. \]
Then one has 
\[
\varphi_A (h(r,t)) = h(r,\tilde{Y}_{-r} B^{-T} \tilde{Y}_r t)~, 
\]
where we used $\tilde{Y}_s = \exp(s {\rm diag}(\lambda_2,\ldots,\lambda_d))$.

In particular, the restriction $\varphi_A|_S : S \to S$ is a well-defined bijection. 
\end{lemma}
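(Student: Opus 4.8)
The plan is to compute $\varphi_A(h(r,t))$ directly from the definition $\varphi_A(h) = (p_{\xi_0})^{-1}(A^{-T} h^{-T} \xi_0)$ by first evaluating $h(r,t)^{-T}\xi_0 = p_{\xi_0}(h(r,t))$, which is already given explicitly in the excerpt as the vector with entries $e^r, e^{\lambda_2 r}t_2, \ldots, e^{\lambda_d r}t_d$. The key observation is that since $A \in \mathcal{S}_1(\mathcal{O})$ has the block form with upper-left entry $1$ and lower-right block $B$, its transpose inverse is $A^{-T} = \left( \begin{smallmatrix} 1 & \mathbf{0} \\ \mathbf{0} & B^{-T} \end{smallmatrix} \right)$, so applying $A^{-T}$ to $p_{\xi_0}(h(r,t))$ leaves the first coordinate $e^r$ untouched and applies $B^{-T}$ to the lower block $(e^{\lambda_2 r}t_2,\ldots,e^{\lambda_d r}t_d)^T = \tilde{Y}_r t$.

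First I would therefore compute $A^{-T} h(r,t)^{-T}\xi_0 = (e^r, (B^{-T}\tilde{Y}_r t)^T)^T$. Then I would invert $p_{\xi_0}$: since $p_{\xi_0}(h(r',t')) = (e^{r'}, e^{\lambda_2 r'}t'_2,\ldots)^T = (e^{r'}, (\tilde{Y}_{r'}t')^T)^T$, matching the first coordinate forces $r' = r$ (using that the dual action is free, by Lemma \ref{lem:OrbitShearletGroups}, so the preimage is unique), and matching the remaining coordinates gives $\tilde{Y}_r t' = B^{-T}\tilde{Y}_r t$, hence $t' = \tilde{Y}_{-r}B^{-T}\tilde{Y}_r t$. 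This yields exactly the claimed formula $\varphi_A(h(r,t)) = h(r, \tilde{Y}_{-r}B^{-T}\tilde{Y}_r t)$. I should note that $\tilde{Y}_r$ is invertible with inverse $\tilde{Y}_{-r}$, which is immediate from its definition as a matrix exponential.

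For the final assertion about $\varphi_A|_S$, I would specialize to $r = 0$: elements of $S$ are precisely the $h(0,t)$, and plugging $r=0$ into the formula gives $\varphi_A(h(0,t)) = h(0, \tilde{Y}_0 B^{-T}\tilde{Y}_0 t) = h(0, B^{-T}t)$ since $\tilde{Y}_0 = I_{d-1}$. Thus $\varphi_A$ maps $S$ into $S$ and acts there as the linear map $t \mapsto B^{-T}t$, which is a bijection of $\mathbb{R}^{d-1}$ because $B$ is invertible; consequently $\varphi_A|_S : S \to S$ is a well-defined bijection.

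The computation is essentially routine block-matrix bookkeeping, so I do not anticipate a genuine obstacle. The one point requiring mild care is the inversion of $p_{\xi_0}$: one must justify that the right inverse $(p_{\xi_0})^{-1}$ produces precisely $h(r', t')$ with the matched parameters, which relies on the freeness of the dual action (Lemma \ref{lem:OrbitShearletGroups}) guaranteeing that the parameterization $(r,t) \mapsto h(r,t)$ is injective on $DS$, so that the preimage of a point in $\mathcal{O}$ under $p_{\xi_0}$ is uniquely pinned down within $DS$. Everything else follows by direct substitution.
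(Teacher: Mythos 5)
Your proposal is correct and follows essentially the same route as the paper: both apply the block form of $A^{-T}$ to the explicit expression for $p_{\xi_0}(h(r,t))$ and read off the parameters from the parametrization $h(r,t)$. You additionally spell out two points the paper leaves implicit (uniqueness of the $p_{\xi_0}$-preimage via freeness of the dual action, and the $r=0$ specialization for the bijectivity of $\varphi_A|_S$), which is fine and only makes the argument more complete.
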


\begin{proof} Simple calculations show  that 
\begin{eqnarray*}
\varphi_A (h(r,t))  & = & (p_{\xi_0})^{-1} (A^{-T} p_{\xi_0} (h(r,t)))\\
& = &  (p_{\xi_0})^{-1} \left( \left( \begin{array}{cc} 1 & \mathbf{0} \\ \mathbf{0} & B^{-T} \end{array} \right)
  \left( \begin{array}{c} e^r \\ e^{\lambda_2r} t_2 \\ \vdots \\ e^{\lambda_d r} t_d \end{array} \right) \right) \\
& = &(p_{\xi_0})^{-1} \left(
  \left( \begin{array}{c} e^r  \\ B^{-T} \tilde{Y}_r t \end{array} \right) \right)\\&=& h(r,\tilde{Y}_{-r} B^{-T} \tilde{Y}_r t ).
\end{eqnarray*}
The statement about $\varphi_A|_S$ is then clear. 
\end{proof}

Our characterization of the elements $A \in \mathcal{S}_{Co_H} \cap \mathcal{S}_1(\mathcal{O})$ rests on two algebraic conditions on $A$. The following lemma clarifies one of these conditions: 
\begin{lemma} \label{lem:char_commutation}
Let  
\[
A =  \left( \begin{array}{cc} 1 & \mathbf{0} \\ \mathbf{0} & B \end{array} \right) \in \mathcal{S}_1(\mathcal{O})~, 
\] and $\varphi_A: H \to H$ defined as in the previous lemma.  Then the following are equivalent: 
\begin{enumerate}
    \item[(a)] $B$ commutes with $\tilde{Y} = {\rm diag}(\lambda_2,\ldots,\lambda_d)$.
    \item[(b)] $\varphi_A|_S: S \to S$ commutes with the conjugation action of $D$ on $S$, i.e.  \[
    \forall d \in D \forall s \in S~:~ d^{-1} \varphi_A(s) d = \varphi_A( d^{-1} s d)~.
    \]
    \item[(c)] For all $h(r,t) \in DS$, one has
    \[
    \varphi_A(h(r,t)) = h(r,B^{-T}t)= h(r,0) \varphi_A(h(0,t))~. 
    \]
    \item[(d)] $B^{-T}$ commutes with all matrices
    \[
    \tilde{Y}_r = \exp(r~ {\rm diag}(\lambda_2,\ldots,\lambda_d))~. 
    \]
\end{enumerate}
\end{lemma}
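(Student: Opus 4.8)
The plan is to prove this as a cycle of implications, exploiting the fact that conditions (a) and (d) are purely linear-algebraic statements about the commutation of $B$ (equivalently $B^{-T}$) with the diagonal matrix $\tilde Y$, while (b) and (c) translate these into statements about the map $\varphi_A$ computed in Lemma \ref{lem:compute_phiA_concr}. First I would establish (a) $\Leftrightarrow$ (d), which is the most elementary step: since $\tilde Y_r = \exp(r\,\tilde Y)$ is a power series in $\tilde Y$, a matrix commutes with every $\tilde Y_r$ if and only if it commutes with $\tilde Y$ itself, and commutation of $B$ with the diagonal matrix $\tilde Y$ is equivalent to commutation of $B^{-T}$ with $\tilde Y$ (one uses that $\tilde Y$ is symmetric, so transposing the relation $B\tilde Y = \tilde Y B$ gives $\tilde Y B^T = B^T \tilde Y$, then inverting).

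Next I would prove (d) $\Leftrightarrow$ (c), which is the heart of the calculation and where I expect the main obstacle to lie. The formula from Lemma \ref{lem:compute_phiA_concr} gives
\[
\varphi_A(h(r,t)) = h(r, \tilde Y_{-r} B^{-T} \tilde Y_r\, t)~,
\]
so condition (c), namely $\varphi_A(h(r,t)) = h(r, B^{-T} t)$ for all $r$ and $t$, is equivalent to the identity $\tilde Y_{-r} B^{-T} \tilde Y_r = B^{-T}$ for all $r \in \mathbb{R}$, which rearranges to $B^{-T} \tilde Y_r = \tilde Y_r B^{-T}$, i.e. precisely (d). The subtlety to handle carefully is the quantification: the conjugation formula must hold for all $t \in \mathbb{R}^{d-1}$, which forces equality of the matrices $\tilde Y_{-r} B^{-T} \tilde Y_r$ and $B^{-T}$, not merely agreement on a single vector. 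I would also verify the second equality in (c), that $h(r,0)\varphi_A(h(0,t)) = h(r,B^{-T}t)$, directly from the group law and the definition \eqref{eqn:def_h} of $h(r,t)$.

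Finally I would close the loop with (c) $\Leftrightarrow$ (b). Here condition (b) states that $\varphi_A|_S$ intertwines the conjugation action of $D$ on $S$, and I would unwind the conjugation $d^{-1} s d$ for $d = h(r,0)$ and $s = h(0,t)$ in coordinates. Using the parametrization, conjugation by $h(r,0)$ acts on the shearing parameter $t$ by the linear map $\tilde Y_r$ (up to the relevant sign conventions in \eqref{eqn:def_h}), so the intertwining relation in (b) becomes exactly the statement that $\varphi_A|_S$, which sends $t \mapsto B^{-T}t$ when (c) holds, commutes with this scaling; conversely, demanding the intertwining for all $d \in D$ reproduces the $r$-independence of the conjugated expression, which is (c). The likely friction point throughout is bookkeeping with the coordinate conventions of \eqref{eqn:def_h} (the inverses and the sign of $r$) to ensure the conjugation action on $S$ really is $t \mapsto \tilde Y_r t$; once that is pinned down, each equivalence is a short computation, and assembling (a) $\Leftrightarrow$ (d) $\Leftrightarrow$ (c) $\Leftrightarrow$ (b) completes the proof.
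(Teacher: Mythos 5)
Your proposal is correct and takes essentially the same route as the paper: (a) $\Leftrightarrow$ (d) by elementary linear algebra, the equivalence with (c) read off directly from the formula $\varphi_A(h(r,t)) = h(r,\tilde{Y}_{-r}B^{-T}\tilde{Y}_r t)$ of Lemma \ref{lem:compute_phiA_concr}, and the equivalence with (b) by computing the conjugation action of $D$ on $S$ in the $h(r,t)$-coordinates. The one place you hedge --- the exact form of that conjugation action --- is resolved in the paper as $d^{-1}h(0,t)d = h(0,\tilde{Z}_r t)$ with $\tilde{Z}_r = e^{r}\tilde{Y}_{-r}$ rather than $\tilde{Y}_r$, but as you anticipate this is harmless, since commuting with all $e^{r}\tilde{Y}_{-r}$ is the same as commuting with all $\tilde{Y}_r$.
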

\begin{proof}
For $(a) \Leftrightarrow (d)$ note that $B$ commutes with $\tilde{Y}$ iff $B^{-1}$ commutes with $\tilde{Y}$, iff $B^{-T}$ commutes with $\tilde{Y}^T = \tilde{Y}$. It is straightforward to check that the last condition is equivalent to $(d)$.

$(d) \Leftrightarrow (c)$ follows directly from the formula 
\[
\varphi_A(h(r,t)) = h(r,\tilde{Y}_{-r} B^{-T} \tilde{Y}_r t) 
\] established in Lemma \ref{lem:compute_phiA_concr}.

In order to show $(d) \Leftrightarrow (b)$, we consider $d = h(r,0)$ and $s = h(0,t)$, and define $\tilde{Z}_r= {\rm diag} (e^{-r(\lambda_2 -  1} ), ...,  e^{-r(\lambda_d-1)})$. Then we have the relation
\begin{equation} \label{eqn:conj_coords}
d^{-1} h(0, t) d= h (0,   \tilde{Z}_rt).\end{equation}
To see this, we first compute 
\begin{eqnarray*}
d^{-1} h(0, t) ^{-1}d  & = & 
\begin{pmatrix}
e^{-r}      \\
 &   e^{-r\lambda _2} & 			\\
 &  & \ddots & 		\\
 &  & & e^{-r\lambda _d}
\end{pmatrix}
\begin{pmatrix}
1 &  &  t   &    \\
 &   1    &  0			\\
 &      & \ddots & 		\\
 &  & & 1 
\end{pmatrix}
\begin{pmatrix}
e^{r}      \\
 &   e^{r\lambda _2} & 			\\
 &  & \ddots & 		\\
 &  & & e^{r\lambda _d}
\end{pmatrix}
\\
& = & 
\begin{pmatrix}
1      &   e^{r(\lambda _2-1) t } &  e^{r(\lambda _3-1)t  }  &\dots &		e^{r(\lambda _d-1) t }	\\
& 1 & \\
&  &  1 & \\
 &   &   & \ddots &  	\\
  &  &   & & 1
\end{pmatrix}
\\
& = &
h(0, \tilde{Z}_r t)^{-1}  \in S.
\end{eqnarray*}
Now inverting both sides yields equation (\ref{eqn:conj_coords}). 
With the help of this equation, condition (b) is reformulated as 
\[
\forall r \in \mathbb{R}~\forall t \in \mathbb{R}^{d-1}:~ h(0,\tilde{Z}_r B^{-T} t) = h(0,B^{-T} \tilde{Z}_r t)~,
\] which is equivalent to the condition that $B^{-T}$ commutes with all $\tilde{Z}_r$. But this last condition is equivalent to (d), because of $\tilde{Z}_r = e^r \tilde{Y}_{-r}$. 
\end{proof}

\begin{remark} \label{rem:char_commutant_Y}
Since $Y$ is diagonal, the matrices commuting with $\tilde{Y}$ are particularly easy to determine: Note that $\mathbb{R}^{d-1}$ is the direct sum of eigenspaces of $\tilde{Y}$. Then it is straightforward to check that a matrix $B$ commutes with $\tilde{Y}$ iff $B$ maps each eigenspace into itself. This condition translates directly to a block diagonal structure of $B$, up to possible permutations of the entries.

More precisely, let $B = (b_{i,j})_{2 \le i,j \le d}$ be any matrix in $GL(\mathbb{R}^{d-1})$. Here we adjusted the index set for the entries to comply with the indexing conventions for $t = (t_2,\ldots,t_d) \in \mathbb{R}^{d-1}$ adopted above. 
Then $B \tilde{Y} = \tilde{Y} B$ is equivalent to requiring $b_{i,j} = 0$ whenever $\lambda_i \not= \lambda_j$.
\end{remark}

The following lemma notes an algebraic condition relating certain automorphisms of $S$ to those of $\mathfrak{s}$, and exhibits a close connection to the normalizer of $S$. The following lemma makes use of the various ways in which the shearing group $S$ can be written, using Lemma \ref{lem:CanonicalBasis} and the parametrization $h$, i.e. 
\[
 S = \{ I_d + X : X \in \mathfrak{s} \} = \left\{ I_d + \sum_{i=2}^d t_i X_i : t \in \mathbb{R}^{d-1} \right\} = \left\{ h(0,t) : t \in \mathbb{R}^{d-1} \right\}~.
\]
\begin{lemma} \label{lem:group_vs_algebra}
Let 
\[
A =  \left( \begin{array}{cc} 1 & \mathbf{0} \\ \mathbf{0} & B \end{array} \right) \in \mathcal{S}_1(\mathcal{O})~, 
\] 
with $B \in GL(\mathbb{R}^{d-1})$.
Then the following are equivalent: 
\begin{enumerate}
    \item[(a)] The map $\psi_B: h(0,t) \mapsto h(0,Bt)$ is a group automorphism of $S$. 
    \item[(b)] The linear isomorphism $\Psi_B: \mathfrak{s} \to \mathfrak{s}$, 
    \[
    \sum_{i=2}^d t_i X_i \mapsto \sum_{i=2}^d s_i X_i~,~ s= Bt
    \]
    is an automorphism of the associative matrix algebra $\mathfrak{s}$.
    \item[(c)] $A \in N_S$.
\end{enumerate}
In particular, every algebra automorphism $\Psi : \mathfrak{s} \to \mathfrak{s}$ gives rise to a unique matrix $B$ such that $\Psi = \Psi_B$, and vice versa. 
\end{lemma}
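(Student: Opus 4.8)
The plan is to route everything through the identification of $S$ with its Lie algebra $\mathfrak{s}$, equipped with a twisted product. By Lemma \ref{lem:CanonicalBasis}(ii)--(iii) the map $\iota:\mathfrak{s}\to S$, $X\mapsto I_d+X$, is a bijection onto $S$, and since $\mathfrak{s}$ is closed under matrix multiplication I may transport the group law of $S$ to $\mathfrak{s}$: for $X,Y\in\mathfrak{s}$ one has $(I_d+X)(I_d+Y)=I_d+(X+Y+XY)$ with $XY\in\mathfrak{s}$, so the transported product is $X\star Y=X+Y+XY$. I would record at the outset that $S$ is abelian, so inversion $\sigma:g\mapsto g^{-1}$ is a group automorphism of $S$; this is what lets me absorb the inverse appearing in the parametrization $h(0,t)=(I_d+\sum_i t_iX_i)^{-1}$.

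For (a)$\Leftrightarrow$(b) I would first observe that a linear map $\Psi$ of $\mathfrak{s}$ satisfies $\Psi(X\star Y)=\Psi(X)\star\Psi(Y)$ for all $X,Y$ if and only if it is an algebra homomorphism: subtracting the additive parts from $\Psi(X+Y+XY)=\Psi(X)+\Psi(Y)+\Psi(X)\Psi(Y)$ leaves exactly $\Psi(XY)=\Psi(X)\Psi(Y)$. Hence $\Psi_B$ is an algebra automorphism iff $\iota\circ\Psi_B\circ\iota^{-1}$ is a group automorphism of $S$. It then remains to match this with $\psi_B$: writing $h(0,t)=\sigma(\iota(\sum_i t_iX_i))$ gives $\psi_B=\sigma\circ(\iota\circ\Psi_B\circ\iota^{-1})\circ\sigma$, so $\psi_B$ is a group automorphism iff $\iota\circ\Psi_B\circ\iota^{-1}$ is, using that $\sigma$ is an automorphism. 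This closes (a)$\Leftrightarrow$(b); the bijection $\Psi\leftrightarrow B$ asserted in the final sentence is then immediate, $B$ being the matrix of $\Psi$ in the canonical basis $X_2,\dots,X_d$.

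For the link to (c) I would use that $A(I_d+X)A^{-1}=I_d+AXA^{-1}$, so $A\in N_S$ is equivalent to $A\mathfrak{s}A^{-1}=\mathfrak{s}$, i.e. to conjugation $\mathrm{Ad}_A$ preserving $\mathfrak{s}$. Since $\mathrm{Ad}_A$ is always an algebra automorphism of $\mathfrak{gl}(\R^d)$, whenever it preserves $\mathfrak{s}$ it restricts to an algebra automorphism there; conversely every algebra automorphism of $\mathfrak{s}$ should be realized this way by a block matrix in $\mathcal{S}_1(\mathcal{O})$. To read off the matrix I would exploit the canonical-basis normalization $X_i^Te_1=e_i$: for $X=\sum_i t_iX_i$ one has $X^Te_1=(0,t)^\top$, and since $A^Te_1=e_1$ the block form gives $(\mathrm{Ad}_A X)^Te_1=A^{-T}X^Te_1$, which pins down the coordinate action of $\mathrm{Ad}_A|_{\mathfrak{s}}$ explicitly in terms of $B$. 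Matching this coordinate action with $\Psi_B$ yields the equivalence with (c).

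The delicate step is precisely the equivalence with (c). While it is routine that an ambient conjugation restricts to an algebra automorphism, the converse---that an \emph{abstract} algebra automorphism of $\mathfrak{s}$ is induced by conjugating with a block-diagonal matrix fixing $e_1$---is the crux, and it is exactly here that the normalization $X_i^Te_1=e_i$ together with the filtration $\mathfrak{s}_k\mathfrak{s}_\ell\subset\mathfrak{s}_{k+\ell-1}$ from Lemma \ref{lem:CanonicalBasis}(iii) must be used to reconstruct the action on all rows from the first-row (coordinate) action. I also expect the bookkeeping of transposes and inverses relating $B$ to the coordinate action of $\mathrm{Ad}_A$ (whether the governing matrix is $B$, $B^{-T}$, or $B^T$) to be the most error-prone part, so I would fix all conventions---in particular the precise role of $X^Te_1$---before carrying out the identification.
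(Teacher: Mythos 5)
Your argument for (a) $\Leftrightarrow$ (b) is correct and coincides with the paper's: transport the group law to $\mathfrak{s}$ via $X \mapsto I_d+X$, use commutativity of $S$ so that inversion is an automorphism (absorbing the inverse in the parametrization $h(0,t)$), and cancel the linear terms to reduce multiplicativity to $\Psi_B(XY)=\Psi_B(X)\Psi_B(Y)$. The equivalence with (c) is where your proposal has genuine gaps. First, the direction you yourself call the crux---that an abstract algebra automorphism of $\mathfrak{s}$ is induced by conjugation with a block matrix in $\mathcal{S}_1(\mathcal{O})$---is never actually proved; you only indicate which tools ``must be used''. The paper closes this point with a short explicit computation: writing $h(0,t)^{-1}=\left(\begin{smallmatrix}1 & t^T\\ \mathbf{0} & I_{d-1}+C(t)\end{smallmatrix}\right)$ with $C$ linear in $t$, it shows that the homomorphism property of $\psi$ and the condition $A^{-1}h(0,t)^{-1}A\in S$ for all $t$ reduce to \emph{one and the same} identity, namely $B^{-1}C(t)B=C(B^Tt)$ for all $t$. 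The only structural input is that an element of $S$ (equivalently of $\mathfrak{s}$) is determined by its first row, so membership of the conjugate in $S$ is tested by comparing the lower-right block against $C$ evaluated at the first row; the filtration $\mathfrak{s}_k\mathfrak{s}_\ell\subset\mathfrak{s}_{k+\ell-1}$ plays no role.

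Second, and more seriously, the step ``matching this coordinate action with $\Psi_B$'' would fail if carried out. Your computation (which is correct) gives $(\mathrm{Ad}_A X)^Te_1=A^{-T}X^Te_1$, i.e.\ the coordinate action of $\mathrm{Ad}_A|_{\mathfrak{s}}$ is $t\mapsto B^{-T}t$; hence what your route establishes is $A\in N_S \Leftrightarrow \Psi_{B^{-T}}\in\mathrm{Aut}(\mathfrak{s})$, equivalently $\Psi_{B^{T}}\in\mathrm{Aut}(\mathfrak{s})$ (coordinate matrices of automorphisms form a group under multiplication, hence are closed under inversion). This cannot simply be identified with condition (b) for $B$ itself, because the set of such coordinate matrices is \emph{not} closed under transposition in general: for the Toeplitz shearing algebra ($X_j=X_2^{j-1}$) automorphisms are determined by $X_2\mapsto c_2X_2+\cdots+c_dX_d$, $c_2\neq 0$, and their coordinate matrices are triangular; already for $d=3$ and $B=\left(\begin{smallmatrix}1&0\\1&1\end{smallmatrix}\right)$ one checks directly that $\Psi_B$ is an algebra automorphism while $\mathrm{diag}(1,B)\notin N_S$, whereas $\mathrm{diag}(1,B^T)\in N_S$. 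So the transpose is not bookkeeping to be fixed afterwards: the equivalence only holds once $B$ is replaced by $B^{T}$ (or $B^{-T}$) in one of the three items. The paper's proof in effect does exactly this---from the start of its (a) $\Leftrightarrow$ (c) argument it works with the map $h(0,t)\mapsto h(0,B^Tt)$, so that the same matrix appears on both sides of the final identity---while your proposal stops precisely at the point where this reconciliation has to be made.
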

\begin{proof}
We start out by noting that $h(0,t) = (I_d + \sum_{i=2}^d t_i X_i)^{-1}$, and therefore $\psi_B(I_d + X)^{-1} = I_d + \Psi_B(X)$, for all $X \in \mathfrak{s}$, by definition of $\Psi_B$. Furthermore, since $S$ is commutative, $\psi_B$ is a group isomorphism iff $\tilde{\psi}_B$, defined by $\tilde{\psi} (h(0,t)) = \psi_B(h(0,t))^{-1}$ is an isomorphism. Using these observations, one gets via linearity of $\Psi_B$ that
\begin{eqnarray*} \lefteqn{\tilde{\psi}_B((I_d+X)(I_d+Y)) = \tilde{\psi}_B (I_d+X) \tilde{\psi}_B(I_d+Y)} \\ & \Leftrightarrow & I_d+\Psi_B(X+Y+XY) = I_d+\Psi_B(X) + \Psi_B(Y) + \Psi_B(X) \Psi_B(Y) \\
& \Leftrightarrow & \Psi_B(XY) = \Psi_B(X) \Psi_B(Y)~.  \end{eqnarray*}
In other words, the multiplicativity properties of $\tilde{\psi}_B$ on $S$ and of $\Psi_B$ on $\mathfrak{s}$ are equivalent. Since $\Psi_B$ is by definition a linear bijection, the desired equivalence follows.

The proof of the equivalence $(a) \Leftrightarrow (c)$ requires additional notation. We write 
\[
h(0,t)^{-1} = I_d + \sum_{i=2}^d t_i X_i = \left( \begin{array}{cc} 1 & t^T \\ \mathbf{0} & I_{d-1}+C(t) \end{array} \right)~,
\] with a linear map $C: \mathbb{R}^{d-1} \to \mathbb{R}^{(d-1) \times (d-1)}$ induced by the entries of the canonical basis $X_2,\ldots,X_d$ of $\mathfrak{s}$. With this notation we get the product formula
\begin{equation}
    h(0,t_1)^{-1} h(0,t_2)^{-1} = h(0,t_1+t_2 + C(t_2)^T \cdot t_1)^{-1}~.
\end{equation}
Hence, we see that the condition (a) is equivalent to the equation 
\[
h(0,B^{T} \cdot (t_1 +t_2 + C(t_2)^T \cdot t_1)) = h(0,B^T\cdot t_1 + B^T \cdot(t_2 + C(B^{T}\cdot t_2)^T \cdot B^T \cdot t_1))~,
\] holding for all $t_1,t_2 \in \mathbb{R}^{d-1}$. Using the fact that $h$ is bijective, we can simplify this condition to 
\[
\forall t \in \mathbb{R}^{d-1} ~:~ B^T \cdot C(t)^T = C(B^{T} \cdot t)\cdot B^T ~. 
\]
A further slight simplification therefore establishes 
\begin{equation} \label{eqn:char_psi_B_hom}
    \psi_B \mbox{ is a group homomorphism} \Leftrightarrow 
    \forall t \in \mathbb{R}^{d-1} ~:~ B^{-1} \cdot C(t) \cdot B = C(B^T \cdot t)~.
\end{equation}
On the other hand, a direct computation yields
\begin{eqnarray*}
A^{-1} \cdot h(0,t)^{-1} \cdot A & = &  \left( \begin{array}{cc} 1 & \mathbf{0} \\ \mathbf{0} & B \end{array} \right)  \cdot 
\left( \begin{array}{cc} 1 & t^T \\ \mathbf{0} & I_{d-1}+C(t) \end{array} \right)
\cdot  \left( \begin{array}{cc} 1 & \mathbf{0} \\ \mathbf{0} & B^{-1} \end{array} \right) \\
& = & \left( \begin{array}{cc} 1 & (B^T \cdot t)^T \\ \mathbf{0} & I_{d-1}+B^{-1} \cdot C(t) \cdot B \end{array} \right)~.
\end{eqnarray*} 
This shows that 
\[ A^{-1} \cdot h(0,t)^{-1} \cdot A \in S \] is equivalent to 
\[ A^{-1} \cdot h(0,t)^{-1} \cdot A = h(0,B^T \cdot t)^{-1} ~,\] for all $t \in \mathbb{R}^{d-1}$. Now a comparison of the lower right block matrices on both sides of the equation reveals that this is in turn equivalent to the right-hand side of the equivalence (\ref{eqn:char_psi_B_hom}). This concludes the proof of (a) $\Leftrightarrow$ (c). 

Finally, given any linear map $\Psi: \mathfrak{s} \to \mathfrak{s}$, the existence of a matrix $B$ fulfilling  
\[
\forall t \in \mathbb{R}^{d-1}~:~\Psi\left(\sum_{i=2}^d t_i X_i\right) = \sum_{i=2}^d s_i X_i~,~s = Bt
\] is a basic fact of linear algebra. The equivalence (a) $\Leftrightarrow$ (b) therefore establishes the final statement of the lemma.  
\end{proof}

We can now prove the main result of this section, which is an algebraic characterization of the elements of $\mathcal{S}_{Co_H}$.
\begin{theorem} \label{thm:charact_compat_S1}
Let $H = DS \cup -DS$ denote a shearlet dilation group, with infinitesimal generator $Y = {\rm diag}(1,\lambda_2,\ldots,\lambda_d)$ of $S$. Let \[
A =  \left( \begin{array}{cc} 1 & \mathbf{0} \\ \mathbf{0} & B \end{array} \right) \in \mathcal{S}_1(\mathcal{O})~.  
\] Then the following are equivalent: 
\begin{enumerate}
    \item[(a)] $A  \in \mathcal{S}_{Co_H}$.
    \item[(b)] $A \in N_S$, and $A$ commutes with $Y$.
    \item[(c)] $A \in N_H$.
\end{enumerate}
\end{theorem}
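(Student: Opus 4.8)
The plan is to prove the theorem as a cycle, treating the equivalence (b) $\Leftrightarrow$ (c) as a purely algebraic statement about the Lie algebra $\mathfrak{h} = \mathbb{R} Y \oplus \mathfrak{s}$, the implication (c) $\Rightarrow$ (a) as an immediate consequence of Corollary \ref{cor:normalizer}, and the implication (a) $\Rightarrow$ (b) as the substantial part, obtained from the coarse-geometric criterion of Theorem \ref{thm:main_char_compatible_coorbit}(d) together with Lemmas \ref{lem:compute_phiA_concr}, \ref{lem:char_commutation} and \ref{lem:group_vs_algebra}.

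For (b) $\Leftrightarrow$ (c) I would argue at the Lie algebra level. Since $A = \mathrm{diag}(1,B)$ has first row $e_1^T$ and $A^{-1}$ has first column $e_1$, conjugation by $A$ preserves the $(1,1)$-entry of every matrix; moreover $\mathfrak{s}$ is exactly the subspace of $\mathfrak{h}$ whose elements have vanishing $(1,1)$-entry, while $Y$ has $(1,1)$-entry $1$. Thus $A \in N_H$ is equivalent to $\mathrm{Ad}(A)\mathfrak{h} = \mathfrak{h}$, which splits into $\mathrm{Ad}(A)\mathfrak{s} = \mathfrak{s}$, i.e. $A \in N_S$, together with $AYA^{-1} \in \mathfrak{h}$. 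Writing $AYA^{-1} = cY + X$ with $X \in \mathfrak{s}$, the $(1,1)$-entry forces $c = 1$, and comparing the first row of the block-diagonal matrix $AYA^{-1}$ with that of $Y + X = Y + \sum_i x_i X_i$ (whose first row is $(1,x_2,\dots,x_d)$, using $X_i^T e_1 = e_i$) forces $X = 0$, i.e. $[A,Y]=0$. Conversely, $A \in N_S$ and $[A,Y]=0$ give $\mathrm{Ad}(A)\mathfrak{h} = \mathfrak{h}$, hence $A$ normalizes $H_0 = DS$, and since $A$ fixes $\pm I$ it normalizes $H$. Then (c) $\Rightarrow$ (a) follows at once from $N_H \subset \mathcal{S}_{Co_H}$ (Corollary \ref{cor:normalizer}).

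For (a) $\Rightarrow$ (b) I would first extract the commutation with $Y$. By Theorem \ref{thm:main_char_compatible_coorbit}(d), $A \in \mathcal{S}_{Co_H}$ is equivalent to $\varphi_A$ being a $d_W$-quasi-isometry, and by Lemma \ref{lem:compute_phiA_concr} the $t$-component of $\varphi_A(h(r,t))$ is acted on by $M_r$ with $(M_r)_{ij} = (B^{-T})_{ij}\, e^{r(\lambda_j-\lambda_i)}$. If $(B^{-T})_{ij} \neq 0$ for some $i,j$ with $\lambda_i \neq \lambda_j$, then $|M_r e_j|$ grows exponentially as $r \to \pm\infty$. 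Using $h(r,0)^{-1}h(r,s) = h(0,s)$ and left invariance, one has $d_W(h(r,0),h(r,s)) = d_W(e,h(0,s))$, which is unbounded in $|s|$, whereas $d_W(h(r,0),h(r,e_j))$ is constant; hence $\varphi_A$ would distort these distances without bound, contradicting the quasi-isometry property. Therefore $(B^{-T})_{ij} = 0$ whenever $\lambda_i \neq \lambda_j$, so $B$ commutes with $\tilde Y$ by Remark \ref{rem:char_commutant_Y}, i.e. $A$ commutes with $Y$, and by Lemma \ref{lem:char_commutation} the map simplifies to $\varphi_A(h(r,t)) = h(r,B^{-T}t)$.

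It then remains to deduce $A \in N_S$, and this is where I expect the main obstacle to lie. The point is that the quasi-isometry estimate must be used between pairs of points with a \emph{varying} base point, since $\varphi_A$ is not a priori a homomorphism: comparing $\varphi_A(h(r,a))$ with $\varphi_A\big(h(r,a)\,h(0,b)\big)$ and invoking left invariance reduces the estimate to controlling $d_W\big(e,\,h(0,\theta(a,b))\big)$, where $\theta(a,b)$ measures the failure of $B^{-T}$ to respect the shear composition law. By the grading $\mathfrak{s}_k \mathfrak{s}_\ell \subset \mathfrak{s}_{k+\ell-1}$ of Lemma \ref{lem:CanonicalBasis}, this defect is bilinear and, unless it vanishes identically, grows with $|a|$ while the reference distance $d_W\big(e,h(0,b)\big)$ stays fixed; boundedness forced by the quasi-isometry property then shows that $\varphi_A|_S$ is a genuine automorphism of $S$, whence $A \in N_S$ by Lemma \ref{lem:group_vs_algebra} (the commutation $[A,Y]=0$ reconciling the transpose in the identification $\varphi_A|_S = \psi_{B^{-T}}$). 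The delicate step is precisely this rigidity: extracting the exact algebraic automorphism condition from an asymptotic quasi-isometry estimate, i.e. showing that a linear self-quasi-isometry of the distorted, graded word metric on $S$ must preserve the associative product of $\mathfrak{s}$ and not merely the eigenspace grading of $\tilde Y$ already secured in the previous paragraph.
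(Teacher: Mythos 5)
Your proposal is correct in substance and follows the same overall architecture as the paper: the cycle (b) $\Rightarrow$ (c) $\Rightarrow$ (a) $\Rightarrow$ (b), with (c) $\Rightarrow$ (a) quoted from Corollary \ref{cor:normalizer}, and the substantial implication (a) $\Rightarrow$ (b) extracted from the quasi-isometry property of $\varphi_A$ in Theorem \ref{thm:main_char_compatible_coorbit}(d) combined with Lemmas \ref{lem:compute_phiA_concr}, \ref{lem:char_commutation} and \ref{lem:group_vs_algebra}. (You prove (b) $\Leftrightarrow$ (c) in detail; the paper dismisses (b) $\Rightarrow$ (c) as a straightforward computation.) The difference lies in how the two halves of (b) are obtained. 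The paper proves \emph{both} by one and the same rigidity trick: for fixed $s$ (resp.\ fixed $r$), the group-valued defect $\alpha_S(t)=\varphi_A(h(0,t)h(0,s))^{-1}\varphi_A(h(0,t))$ (resp.\ $\beta(t)=\varphi_A(h(0,t)h(r,0))^{-1}\varphi_A(h(0,t))$) is a polynomial map of $t$ which is bounded, by left invariance of the word metric plus the quasi-isometric upper bound, hence constant; evaluating at $t=0$ yields the homomorphism property of $\varphi_A|_S$ (so $A\in N_S$) and the relation $\tilde{Y}_{-r}B^{-T}\tilde{Y}_r=B^{-T}$ (so $[A,Y]=0$ via Lemma \ref{lem:char_commutation}). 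Your treatment of the shearing half is exactly this argument---your bilinear defect $\theta(a,b)$ is the nonconstancy of the paper's $\alpha_S$, and "bounded polynomial is constant" is how one makes "grows with $|a|$ unless it vanishes" rigorous---so the step you single out as the main obstacle is resolved in the paper just as you sketch it. Your treatment of the commutation half, however, is genuinely different and arguably more direct: instead of fixing $r$ and varying $t$ polynomially, you let $r\to\pm\infty$ and note that a nonzero entry $(B^{-T})_{ij}$ with $\lambda_i\neq\lambda_j$ makes $d_W\left(\varphi_A(h(r,0)),\varphi_A(h(r,e_j))\right)$ blow up while $d_W\left(h(r,0),h(r,e_j)\right)$ stays constant, contradicting the quasi-isometry bound. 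This needs only properness of the word metric (choose $W$ relatively compact, as permitted), not polynomiality, and is a clean alternative.

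One inaccuracy to fix: your parenthetical claim that $[A,Y]=0$ "reconciles the transpose" in the identification $\varphi_A|_S=\psi_{B^{-T}}$ is not right; e.g.\ for the Toeplitz group with $\delta=0$ the commutation condition is vacuous, yet $N_S\cap\mathcal{S}_1(\mathcal{O})$ is not closed under transposition. The correct reconciliation uses the group structure of $N_S$: tracking the transpose conventions in Lemma \ref{lem:group_vs_algebra} carefully (the paper's own proof of that lemma silently passes from $B$ to $B^T$), the automorphism property of $\psi_{B^{-T}}$ corresponds to ${\rm diag}(1,B^{-1})\in N_S$, and then ${\rm diag}(1,B)\in N_S$ since $N_S$ is a group. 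This is a bookkeeping point---one the paper itself glosses over by writing $\psi_B=\varphi_A|_S$---not a gap in your method.
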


\begin{proof} 
The implication (b) $\Rightarrow$ (c) is proved by straightforward computation, whereas (c) $\Rightarrow$ (a) was observed in Corollary \ref{cor:normalizer}. The following argument therefore focuses on (a) $\Rightarrow$ (b).

Assume  $A \in \mathcal{S}_{Co_H}$. We aim to use Lemma \ref{lem:group_vs_algebra}, and therefore want to establish that  the map $
    \psi_B=\varphi_A|_S: S \to S$ is a group homomorphism.

For this purpose, fix $s\in \mathbb{R}^{d-1}$. Define 
\[
    \alpha_S:  \mathbb{R}^{d-1}  \to H~,~ 
    \alpha_S (t)= \varphi_A(h(0,t)h(0,s))^{-1} \varphi_A(h(0,t)), 
    t \in \mathbb{R}^{d-1}.   
    \]
Then $\alpha_S$ is a composition of  polynomial maps, hence it is polynomial (note that inversion is polynomial on the set of unipotent matrices).
Since $A \in \mathcal{S}_{Co_H}$, by Theorem \ref{thm:main_char_compatible_coorbit}, $\varphi_A$ is quasi-isometry  with respect to a suitable word metric $d_H$ on $H$. So there exist $a>0, b\geq 0$ such that 
\begin{eqnarray*}
d_H(  \alpha_S (t) , e_H) & = & d_H (\varphi_A(h(0,t)h(0,s))^{-1} \varphi_A(h(0,t)), e_H)\\
& = &  d_H (\varphi_A(h(0,t)), \varphi_A(h(0,t)h(0,s)))
\\&\leq & a d_H (h(0,t), h(0,t)h(0,s)) + b
\\
& = &  a d_H(e_H, h(0,s)) + b,
\end{eqnarray*}
using left-invariance  of the metric in the second equality.
Thus  
$\alpha_S$ is a bounded polynomial function, hence constant.
In particular, \[
\varphi_A(h(0,t)h(0,s))^{-1} \varphi_A(h(0,t))=  \alpha_S (t) = \alpha_S (0)= 
    \varphi_A(h(0,s))^{-1}.\]
    Thus 
\[
\varphi_A(h(0,t)h(0,s))= \varphi_A(h(0,t))\varphi_A(h(0,s)),\]
which implies that 
$\psi_B=\varphi_A|_S$ is a group homomorphism. Now Lemma \ref{lem:group_vs_algebra} yields $A \in N_S$.

To prove the second condition,  note that 
\[
\varphi_A (h(r,t)) = h(r,\tilde{Y}_{-r} B^{-T} \tilde{Y}_r t)~=
h(r, 0) h(0, \tilde{Y}_{-r} B^{-T} \tilde{Y}_r t),
\] with $ \tilde{Y}_r = \exp(r~ {\rm diag}(\lambda_2,\ldots,\lambda_d))$.
Fix $d=h(r,0)$. Define
\[
\beta(t)  =   \varphi_A(h(0,t)d)^{-1}\varphi_A(h(0,t))
=\varphi_A(dd^{-1}h(0,t)d)^{-1}\varphi_A(h(0,t)).
\]


Now we have:
\begin{eqnarray*}
\beta(t)  &=&  \varphi_A(dd^{-1}h(0,t)d)^{-1}\varphi_A(h(0,t))\\
& = & 
\varphi_A(h(r, 0)h(0, \tilde{Z}_r t))^{-1} \varphi_A(h(0,t))
\\&= &  [h(r, 0)h(0, \tilde{Y}_{-r} \tilde{Z}_r B^{-1} \tilde{Y}_r t)]^{-1}  h(0,B^{-T}t)\\
&=&   h(0, \tilde{Y}_{-r} \tilde{Z}_r B^{-1} \tilde{Y}_r t)^{-1}  h(-r, 0) h(0, B^{-T}t),
\end{eqnarray*}
with $r\in \mathbb{R}$ fixed, 
thus $\beta (t)$ defines   a polynomial in $t$. Furthermore, it is bounded,  since, for any word metric $d_H$ on $H$, we can employ the quasi-isometry property of $\varphi_H$ to estimate 
\begin{eqnarray*} d_H(\beta(t),e_H) &  = & d_H((\varphi(h(0,t) d)^{-1} \varphi_A(h(0,t)), e_H) \\ &  = & d(\varphi_A(h(0,t)), \varphi_A(h(0,t) d) ) \\
& \le & a d(h(0,t),h(0,t)d) + b \\
& \le & a d(e_H, d) + b \end{eqnarray*}. 

It follows that $\beta$ must be constant, i.e. 
$\beta (t) = h(-r, 0) = d^{-1} $, for all $t\in \mathbb{R}^{d-1}$.
Using  equation \ref{eqn:conj_coords}, we get
\begin{eqnarray*}
h(-r, 0) h(0, B^{-T} t )   &=& h(-r, 0) h(0, B^{-T} t )
h(r, 0) h(-r, 0)\\
& = & 
h(0, \tilde{Z}_r B^{-T} t ) h(-r, 0). 
\end{eqnarray*}
Here we recall the diagonal matrices $\tilde{Z}_r =  {\rm diag} (e^{-r(\lambda_2 -  1)} , ...,  e^{-r(\lambda_d-1)})$ from the proof of Lemma \ref{lem:char_commutation}.
In summary, the equation 
$\beta (t)= d^{-1}$ holds 
if and only if 
$h(0, \tilde{Y}_{-r} \tilde{Z}_r B^{-T} \tilde{Y}_r t)=h(0, \tilde{Z}_r B^{-T} t )$, 
for all $t\in \mathbb{R}^{d-1}$,
if 
and only if 
 $\tilde{Y}_{-r} B^{-T} \tilde{Y}_r = B^{-T} $,
 (note that $\tilde {Z}_r, \tilde{Y}_r$ commute).
 Now Lemma \ref{lem:char_commutation} (d) $\Rightarrow$ (a) yields that $B$ and $\tilde{Y}$ commute, and it follows that $A$ and $Y$ commute. 
\end{proof}

Combining Theorem \ref{thm:charact_compat_S1} with Lemma \ref{lem:red_to_S1} gives rise to the following characterization of $\mathcal{S}_{Co_H}$. Observe that the condition $(a)$, together with the final statement of Lemma \ref{lem:group_vs_algebra}, essentially reduces the task of computing $\mathcal{S}_{Co_H}$ to that of determining a subgroup of algebra automorphisms of $\mathfrak{s}$ commuting with the conjugation action of $\exp(\mathbb{R} Y)$ on $\mathfrak{s}$. 

\begin{corollary} \label{cor:summary_shearlet}
Let $A \in \mathcal{S}(\mathcal{O})$. Then $A \in \mathcal{S}_{Co_H}$ holds iff 
\[
A = \lambda \cdot h \cdot A_1~,~\lambda \in \mathbb{R}^*~,~h \in S~,~A_1  =  \left( \begin{array}{cc} 1 & \mathbf{0} \\ \mathbf{0} & B \end{array} \right) \in \mathcal{S}_1(\mathcal{O}),
\] and $A_1$ fulfills the following equivalent conditions: 
\begin{itemize}
    \item[(a)] $A_1 \in N_S$ and $A_1$ commutes with $Y$.
    \item[(b)] $A_1 \in N_H$. 
\end{itemize}
In particular, $\mathcal{S}_{Co_H} = N_H$.
\end{corollary}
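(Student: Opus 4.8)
The plan is to assemble the corollary from the three preceding results: the inclusion $\mathcal{S}_{Co_H} \subset \mathcal{S}(\mathcal{O})$ furnished by Theorem \ref{thm:main_char_compatible_coorbit}, the factorization together with the reduction provided by Lemma \ref{lem:red_to_S1}, and the algebraic characterization of $\mathcal{S}_{Co_H} \cap \mathcal{S}_1(\mathcal{O})$ established in Theorem \ref{thm:charact_compat_S1}. All the analytic work has already been done in those statements, so the corollary is essentially a bookkeeping exercise that chains them together.

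First I would establish the factorized characterization. Starting from $A \in \mathcal{S}_{Co_H}$, Theorem \ref{thm:main_char_compatible_coorbit} places $A$ in $\mathcal{S}(\mathcal{O})$, so Lemma \ref{lem:red_to_S1} applies and yields the unique factorization $A = \lambda \cdot h \cdot A_1$ together with the equivalence $A \in \mathcal{S}_{Co_H} \Leftrightarrow A_1 \in \mathcal{S}_{Co_H}$. Feeding $A_1 \in \mathcal{S}_1(\mathcal{O})$ into Theorem \ref{thm:charact_compat_S1} then converts membership in $\mathcal{S}_{Co_H}$ into the two equivalent conditions (a) and (b). Conversely, if $A$ admits such a factorization with $A_1$ satisfying (a) or (b), then $A_1 \in \mathcal{S}_{Co_H}$ by the same theorem, and Lemma \ref{lem:red_to_S1} pulls this back to $A \in \mathcal{S}_{Co_H}$. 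This gives the stated equivalence, and the equivalence of (a) and (b) for $A_1$ is exactly the content of Theorem \ref{thm:charact_compat_S1}.

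The identity $\mathcal{S}_{Co_H} = N_H$ then follows by a two-sided inclusion. The inclusion $N_H \subset \mathcal{S}_{Co_H}$ is Corollary \ref{cor:normalizer}. For the reverse inclusion, I take $A \in \mathcal{S}_{Co_H}$ and write $A = \lambda \cdot h \cdot A_1$ as above, with $A_1 \in N_H$ by condition (b). The point to verify is that the remaining two factors also lie in $N_H$: the scalar matrix $\lambda I_d$ is central, hence normalizes every subgroup, so $\lambda I_d \in N_H$; and $h \in S \subset H \subset N_H$, since any group normalizes itself. As $N_H$ is a subgroup of ${\rm GL}(d,\mathbb{R})$, the product $A = (\lambda I_d) \cdot h \cdot A_1$ lies in $N_H$, which closes the argument.

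I do not anticipate a genuine obstacle, since the hard passage between coorbit norms, decomposition norms, and the quasi-isometry and polynomial-boundedness arguments has already been absorbed into Theorem \ref{thm:charact_compat_S1} and Lemma \ref{lem:red_to_S1}. The one point deserving a moment's care is the observation that the correction factors $\lambda I_d$ and $h$ produced by the factorization are themselves normalizing elements; this is precisely what upgrades the bare statement $A_1 \in N_H$ to $A \in N_H$ and thereby delivers the clean equality $\mathcal{S}_{Co_H} = N_H$.
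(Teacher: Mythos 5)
Your proposal is correct and takes essentially the same route as the paper, which obtains the corollary precisely by combining Lemma \ref{lem:red_to_S1} (factorization and reduction to $\mathcal{S}_1(\mathcal{O})$) with Theorem \ref{thm:charact_compat_S1}, using Corollary \ref{cor:normalizer} for the inclusion $N_H \subset \mathcal{S}_{Co_H}$. Your explicit verification that the factors $\lambda I_d$ (central) and $h \in S \subset H$ (self-normalization) lie in $N_H$, so that $A_1 \in N_H$ upgrades to $A \in N_H$, is exactly the bookkeeping the paper leaves implicit.
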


As a further application, we note
\begin{corollary} \label{cor:SCo_Lie}
Let $H$ denote a shearlet dilation group. Then $\mathcal{S}_{Co_H} \subset GL(\mathbb{R}^d)$ is a closed matrix group, with
\[
d \le \dim(\mathcal{S}_{Co_H}) \le d^2-d+1~.
\]
\end{corollary}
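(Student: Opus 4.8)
The plan is to base the entire argument on the identity $\mathcal{S}_{Co_H} = N_H$ furnished by Corollary \ref{cor:summary_shearlet}, which converts all three assertions (closedness, and the two dimension bounds) into statements about the normalizer of $H$ in ${\rm GL}(\mathbb{R}^d)$. The first thing I would establish is closedness, by the standard observation that the normalizer of a closed subgroup is closed: if $A_n \in N_H$ with $A_n \to A$ in ${\rm GL}(\mathbb{R}^d)$, then for each $h \in H$ one has $A_n h A_n^{-1} \in H$, and continuity of multiplication and inversion gives $A_n h A_n^{-1} \to A h A^{-1}$; since $H$ is closed this yields $A H A^{-1} \subset H$, and applying the same reasoning to $A_n^{-1} \to A^{-1}$ (using $N_H = N_H^{-1}$) gives the reverse inclusion, so $A \in N_H$. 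Hence $\mathcal{S}_{Co_H}=N_H$ is a closed matrix group, and by Cartan's closed subgroup theorem an embedded Lie subgroup, so it has a well-defined dimension. It is worth flagging that this is precisely where the shearlet hypothesis is essential: without the identification with $N_H$, closedness of $\mathcal{S}_{Co_H}$ is open in general, as noted in Remark \ref{rem:SCo_subgroup}.

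For the lower bound I would use $H \subset \mathcal{S}_{Co_H}$ from Remark \ref{rem:SCo_subgroup}. Since $H$ is a closed subgroup, $\dim \mathcal{S}_{Co_H} \ge \dim H$, and using the semidirect product structure $H = DS \cup -DS$ from Lemma \ref{lem:CanonicalBasis}(iv) together with $\dim D = 1$ and $\dim S = \dim \mathfrak{s} = d-1$ (the canonical basis $X_2,\ldots,X_d$ having $d-1$ elements) yields $\dim H = 1 + (d-1) = d$, which is the asserted lower bound.

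For the upper bound I would invoke the inclusion $\mathcal{S}_{Co_H} \subset \mathcal{S}(\mathcal{O})$ recorded at the start of this section. By the explicit block description in Lemma \ref{lem:struct_SO}(a), $\mathcal{S}(\mathcal{O})$ is parametrized by a scalar $\lambda \in \mathbb{R}^*$, a row $z \in \mathbb{R}^{1\times(d-1)}$ and an invertible matrix $B \in {\rm GL}(d-1,\mathbb{R})$, so that
\[
\dim \mathcal{S}(\mathcal{O}) = 1 + (d-1) + (d-1)^2 = d^2 - d + 1~.
\]
As $\mathcal{S}_{Co_H}$ is a closed subgroup of $\mathcal{S}(\mathcal{O})$, its dimension cannot exceed that of the ambient group, giving the upper bound.

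The proof is essentially an assembly of facts already proved, so I do not anticipate a serious obstacle; the one step requiring a genuine (if routine) argument is the closedness of $N_H$. As a consistency check on the upper bound, the unique factorization $A = \lambda \cdot h \cdot A_1$ of Lemma \ref{lem:red_to_S1} identifies $\mathcal{S}_{Co_H}$ set-theoretically with $\mathbb{R}^* \times S \times (\mathcal{S}_1(\mathcal{O}) \cap \mathcal{S}_{Co_H})$, whose dimensions add up to $1 + (d-1) + \dim(\mathcal{S}_1(\mathcal{O}) \cap \mathcal{S}_{Co_H})$; the bound $d^2 - d + 1$ is attained exactly when $\mathcal{S}_1(\mathcal{O}) \cap \mathcal{S}_{Co_H}$ exhausts all of $\mathcal{S}_1(\mathcal{O}) \cong {\rm GL}(d-1,\mathbb{R})$.
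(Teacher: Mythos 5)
Your proof is correct, but it takes a genuinely different route from the paper's. The paper argues entirely through the unique factorization $A = \lambda \cdot h \cdot A_1$ of Lemma \ref{lem:red_to_S1}: it asserts that the factorization map $(\lambda,h,A_1) \mapsto \lambda h A_1$ is a diffeomorphism onto a closed subgroup, so that the dimensions of the three factors add, giving $\dim(\mathcal{S}_{Co_H}) = 1 + (d-1) + \dim\{B\}$ with the dimension of the set of admissible blocks $B$ lying between $0$ and $(d-1)^2$; closedness and both bounds are read off from this single structural claim. You instead sandwich $H \subset \mathcal{S}_{Co_H} \subset \mathcal{S}(\mathcal{O})$ and use monotonicity of dimension for closed subgroups, computing $\dim H = d$ via the semidirect product structure (with the inclusion $H \subset \mathcal{S}_{Co_H}$ from Remark \ref{rem:SCo_subgroup}) and $\dim \mathcal{S}(\mathcal{O}) = 1 + (d-1) + (d-1)^2 = d^2-d+1$ from Lemma \ref{lem:struct_SO}, while deriving closedness from the identity $\mathcal{S}_{Co_H} = N_H$ of Corollary \ref{cor:summary_shearlet} together with the elementary fact that the normalizer of a closed subgroup is closed. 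Your version has the advantage of replacing the paper's unproven ``it is not hard to show'' diffeomorphism claim by two routine, fully checkable facts, and it makes explicit where the shearlet hypothesis enters (namely through $\mathcal{S}_{Co_H} = N_H$, without which closedness is open in general). The paper's factorization argument, on the other hand, carries more information than the bare bounds: it reduces $\dim(\mathcal{S}_{Co_H})$ to the dimension of the set of admissible $B$'s, which is exactly the computation reused in Section \ref{sect:exs} to determine the dimension precisely for the standard and Toeplitz groups --- a refinement your consistency check at the end correctly anticipates.
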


\begin{proof} 
Since the factorization is unique, it is not hard to show that the factorization map $(\lambda,h,A_1) \mapsto \lambda h A_1$ is a diffeomorphism onto a closed subgroup. Hence the dimensions for the choices $\lambda, h,A_1$ add up to yield the dimension of $\mathcal{S}_{Co_H}$. The dimension for the first two variables are $1,d-1$, respectively, whereas 
the dimension of the set of choices for the matrix $B$ lies between $0$ and $(d-1)^2$. 
\end{proof}
 
Section \ref{sect:exs} will exhibit examples of shearlet dilation groups showing that the upper bound for the dimension of $\mathcal{S}_{Co_H}$ is sharp, as well examples $H$ with dimension ${\rm dim}(\mathcal{S}_{Co_H}) = d+1$. It is currently open whether there exist shearlet dilation groups $H$ with ${\rm dim}(\mathcal{S}_{Co_H}) = d$.

We finally use the corollary to quickly derive a correction to Theorem 5.9 \cite{FuKo_Coarse}. 
\begin{theorem}
    Let $H_1,H_2$ denote two shearlet dilation groups of equal dimension. Then $H_1$ and $H_2$ are coorbit equivalent iff $H_1 = H_2$.
\end{theorem}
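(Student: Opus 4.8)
The plan is to prove the nontrivial implication; the converse, that $H_1 = H_2$ implies coorbit equivalence, is immediate from Definition \ref{defn:coorbit_equivalent}. So assume $H_1,H_2$ are coorbit equivalent shearlet dilation groups, both acting on $\mathbb{R}^d$. By Lemma \ref{lem:OrbitShearletGroups} they share the dual orbit $\mathcal{O} = \mathbb{R}^* \times \mathbb{R}^{d-1}$, so Theorem \ref{thm:coorbit_equiv_dual_orbits} (a)$\Rightarrow$(d) yields weak equivalence of the induced coverings $\mathcal{Q}_1,\mathcal{Q}_2$, and Theorem \ref{thm:WeakEquivQuasiIso} then shows that $\mathrm{id}\colon (\mathcal{O}, d_{\mathcal{Q}_1}) \to (\mathcal{O}, d_{\mathcal{Q}_2})$ is a quasi-isometry. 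Composing with the orbit-map quasi-isometries of Theorem \ref{thm:OrbitMapQuasiIsometry} (one per group) produces a quasi-isometry $F\colon (H_1, d_{W_1}) \to (H_2, d_{W_2})$ between word metrics. The key simplification I would exploit is that, in the parametrization (\ref{eqn:def_h}), every shearlet group with generator $Y = \mathrm{diag}(1,\lambda_2,\ldots,\lambda_d)$ has an orbit map of the identical form $p_{\xi_0}(h(r,t)) = (e^r, e^{\lambda_2 r}t_2, \ldots, e^{\lambda_d r}t_d)$, since the canonical basis of Lemma \ref{lem:CanonicalBasis} always satisfies $X_j^T e_1 = e_j$. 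Writing $Y_i = \mathrm{diag}(1,\lambda_2^{(i)},\ldots,\lambda_d^{(i)})$ and matching dual actions gives the explicit formula
\[
F(h_1(r,t)) = h_2\bigl(r, (e^{(\lambda_j^{(1)} - \lambda_j^{(2)})r}\,t_j)_{j=2}^d\bigr).
\]

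First I would recover the scaling data, i.e. show $Y_1 = Y_2$, by a boundedness argument in the spirit of the proof of Theorem \ref{thm:charact_compat_S1}. Fix an index $k$ and a scalar $\tau \neq 0$, and consider the pairs $h_1(r,0)$ and $h_1(r,\tau e_k)$. Since $h(r,t) = h(r,0)\,h(0,t)$, left-invariance of $d_{W_1}$ gives $d_{W_1}(h_1(r,0), h_1(r,\tau e_k)) = d_{W_1}(e_{H_1}, h_1(0,\tau e_k)) =: L_1(\tau)$, independent of $r$. On the $H_2$-side the same computation yields $d_{W_2}(F(h_1(r,0)), F(h_1(r,\tau e_k))) = L_2(e^{(\lambda_k^{(1)} - \lambda_k^{(2)})r}\tau)$, where $L_2(s) = d_{W_2}(e_{H_2}, h_2(0, s e_k))$. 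The quasi-isometry inequalities force this quantity to remain in a bounded interval as $r$ ranges over $\mathbb{R}$; but $h_2(0, s e_k)$ leaves every compact subset of $H_2$ as $|s|\to\infty$, so $L_2(s)\to\infty$. Hence $e^{(\lambda_k^{(1)} - \lambda_k^{(2)})r}$ must stay bounded for all $r$, which is only possible when $\lambda_k^{(1)} = \lambda_k^{(2)}$. As $k$ was arbitrary, $Y_1 = Y_2 =: Y$, so $D_1 = \exp(\mathbb{R}Y) = D_2$, and $F$ collapses to the coordinate identity $h_1(r,t)\mapsto h_2(r,t)$.

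It remains to identify the shearing subgroups, and this is where I expect the main obstacle. With $Y_1 = Y_2$ the metrics $d_{W_1}$ and $d_{W_2}$ are uniformly comparable on the common coordinate space $\{(r,t)\}$, and the shearing data now enters only through the bilinear product term carried by the canonical bases $X_j^{(i)}$, equivalently through the associative algebra filtration $\mathfrak{s}_k\mathfrak{s}_\ell \subset \mathfrak{s}_{k+\ell-1}$ of Lemma \ref{lem:CanonicalBasis}, which governs how cheaply a prescribed shear can be realized at a prescribed scale. The hard part will be to pin down the large-scale asymptotics of $d_{W_i}(e_{H_i}, h_i(r,t))$ precisely enough to show that any discrepancy $\mathfrak{s}_1\neq\mathfrak{s}_2$ produces a family of elements on which $d_{W_1}$ and $d_{W_2}$ diverge, contradicting the quasi-isometry bound; this forces $S_1 = S_2$. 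Once $D_1 = D_2$ and $S_1 = S_2$ are in hand, we conclude $H_1 = D_1 S_1 \cup (-D_1 S_1) = D_2 S_2 \cup (-D_2 S_2) = H_2$. As an alternative endgame, one may instead produce a conjugator $A \in \mathcal{S}(\mathcal{O})$ commuting with $Y$ and satisfying $A H_1 A^{-1} = H_2$; then $H_1$ is coorbit equivalent to $H_2 = A H_1 A^{-1}$, and Theorem \ref{thm:main_char_compatible_coorbit} (b)$\Rightarrow$(a) together with Corollary \ref{cor:summary_shearlet} give $A \in \mathcal{S}_{Co_{H_1}} = N_{H_1}$, so that $H_2 = A H_1 A^{-1} = H_1$.
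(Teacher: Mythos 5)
Your first half is sound: the construction of the quasi-isometry $F$ from coorbit equivalence (via Theorem \ref{thm:coorbit_equiv_dual_orbits}, Theorem \ref{thm:WeakEquivQuasiIso} and Theorem \ref{thm:OrbitMapQuasiIsometry}), the explicit coordinate formula for $F$ (legitimate here because the dual action of a shearlet group is free, Lemma \ref{lem:OrbitShearletGroups}, so $p_{\xi_0}$ is a genuine bijection), and the properness argument forcing $\lambda_k^{(1)} = \lambda_k^{(2)}$ are all correct. In effect you have reproved \cite[Lemma 5.11]{FuKo_Coarse}, a result the paper simply cites to obtain $D_1 = D_2$.

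But the proof stops exactly where the real work begins. The identification of the shearing subgroups is not an argument in your write-up; it is an announcement that ``the hard part will be'' to extract $S_1 = S_2$ from large-scale asymptotics of the word metrics. That step is not only unproven, it is doubtful it can work as stated: the coarse data carried by the quasi-isometric coordinate identity can at best pin down the structure constants of $\mathfrak{s}_1$ and $\mathfrak{s}_2$ with respect to their canonical bases (for instance by the polynomial-plus-bounded-implies-constant device used in the proof of Theorem \ref{thm:charact_compat_S1}), and equal structure constants only yield an abstract algebra isomorphism $\mathfrak{s}_1 \cong \mathfrak{s}_2$, i.e.\ conjugacy of $S_1$ and $S_2$ --- not equality of the two subgroups of ${\rm GL}(d,\mathbb{R})$. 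This is precisely the point where the paper proceeds differently: it invokes \cite[Lemma 5.12]{FuKo_Coarse}, which supplies a conjugator $C$ with $S_2 = C^{-1}S_1C$ whose conjugation action commutes with that of $D_1 = D_2$, so that $H_2 = C^{-1}H_1C$; then coorbit equivalence together with Theorem \ref{thm:main_char_compatible_coorbit} (b) $\Rightarrow$ (a) and Corollary \ref{cor:summary_shearlet} (that is, $\mathcal{S}_{Co_H} = N_H$) forces $C \in N_{H_1}$, hence $H_1 = H_2$. Your ``alternative endgame'' is exactly this argument, but it presupposes the conjugator $A$, whose existence is the missing content. Until you either establish the existence of such a conjugator (i.e.\ reprove Lemma 5.12 of \cite{FuKo_Coarse}) or give a genuine argument that a discrepancy $S_1 \neq S_2$ contradicts the quasi-isometry property, the proof is incomplete at its crux.
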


\begin{proof}
    Assume that $H_1$ and $H_2$ are coorbit equivalent shearlet dilation groups. Assume that $H_i = D_i S_i \cup - D_i S_i$, with scaling subgroups $D_i$ and shearing subgroups $S_i$. While the precise formulation of \cite{FuKo_Coarse}[Theorem 5.9] is incorrect, its proof of the \textit{necessary conditions} for coorbit equivalence of shearlet dilation groups, in particular Lemmas 5.11 and 5.12 of \cite{FuKo_Coarse}, are correct. This entails $D_1 = D_2$ by \cite[Lemma 5.11]{FuKo_Coarse}. In addition, \cite[Lemma 5.12]{FuKo_Coarse} provides a matrix $C \in {\rm GL}(\mathbb{R}^d)$ such that $S_2 = C^{-1}S_1 C$, and in addition, the conjugation actions of $C$ commutes with the conjugation action of $D_1=D_2$. As a consequence, $H_2 = C^{-1} H_1 C$.

    But now the fact that $H_1$ and $H_2$ are coorbit equivalent yields via Corollary \ref{cor:summary_shearlet} that $C \in \mathcal{S}_{Co_{H_1}} = N_H$. But this means $H_2 = H_1$.
\end{proof}

\section{Determining $\mathcal{S}_{Co_H}$: Examples}

\label{sect:exs}
\subsection*{The full picture in two dimensions}

For dimension two, the admissible dilation groups have been classified up to conjugacy and finite index subgroups in \cite{FuDiss}; the following is a complete list of representatives, with their open dual orbits: 
\begin{itemize}
 \item {\bf Diagonal group} 
 \[
  D = \left\{ \left( \begin{array}{cc} a & 0 \\ 0  & b \end{array} \right) : ab \not=  0 \right\} ~,
 \]
 with $\mathcal{O} = (\mathbb{R^*})^2$.
\item {\bf Similitude group} 
 \[
  H = \left\{ \left( \begin{array}{cc} a & b \\ -b  & a \end{array} \right) : a^2+b^2 > 0 \right\} 
 \] with $\mathcal{O} = \mathbb{R}^2 \setminus \{ 0 \}$. 
\item {\bf Shearlet group(s)} For a fixed parameter $c \in \mathbb{R}$,
 \[
  S_c = \left\{ \pm \left( \begin{array}{cc} a & b \\ 0  & a^c \end{array} \right) : a > 0 \right\} ~,
 \] with $\mathcal{O} = \mathbb{R}^* \times \mathbb{R}$. 
\end{itemize}
No two distinct groups from this list are coorbit equivalent \cite{FuKo_Coarse}. We will now determine the compatible dilations for each: 
\begin{itemize}
    \item In the case of the diagonal group $D$, the requirement $A^T \mathcal{O} = \mathcal{O}$ already leads to severe restrictions. In fact, one readily sees that 
    \[
    \mathcal{S}(\mathcal{O}) = \left\{ R^{\epsilon} h~:~ h \in D, \epsilon \in \{0, 1 \} \right\}
    \] with the reflection matrix $R = \left( \begin{array}{cc} 0 & 1 \\ 1 & 0 \end{array} \right)$. 
    Since $R$ clearly normalizes $D$, Remark \ref{rem:SCo_subgroup} and Corollary \ref{cor:normalizer} entail 
    \[
    \mathcal{S}_{Co_H} = \mathcal{S}(\mathcal{O}) = N(H)~. 
    \]
    \item The compatible dilations for the similitude group have already been determined in \cite{FuehrVoigtlWavCooSpaViewAsDecSpa}, namely as 
    \[
    \mathcal{S}_{Co_H} = \mathcal{S}(\mathcal{O}) =  GL(\mathbb{R}^d)~.
    \] Note that in this case, $N(H) \subsetneq \mathcal{S}_{Co_H}$, and that unlike the diagonal case, the group of compatible dilations has strictly bigger dimension than $H$ itself. 
    \item Using the results from Section \ref{sect:SCo_shearlet}, in particular Corollary \ref{cor:summary_shearlet} we get
    \[
    \mathcal{S}_{Co_{S_c}} =  \left\{ \left( \begin{array}{cc} a & b \\ 0  & d \end{array} \right) : ad \not= 0 \right\} ~.
    \]  Again we have 
     \[
    \mathcal{S}_{Co_{S_c}} = \mathcal{S}(\mathcal{O}) = N(S_c)~. 
    \] Note that $\mathcal{S}_{Co_{S_c}}$ is independent of $c$. Since we know by \cite{FuKo_Coarse} that different choices of $c$ lead to distinct scales of coorbit space, we have found an instance where distinct scales of coorbit spaces can have the same symmetry groups. 
\end{itemize}

\subsection*{Standard and Toeplitz shearlet dilation groups in arbitrary dimensions}

The shearing subgroup of the shearlet dilation group has the Lie algebra 
\[
\mathfrak{s} = \left\{ \left( \begin{array}{cccc} 0 & t_2 & \ldots & t_d \\ 0 & \ldots & \ldots & 0 \\
\vdots & \vdots & \vdots & \vdots \\ 0 & \ldots & \ldots & 0 \end{array} \right) : t_2,\ldots,t_d \in \mathcal{R} \right\}~. 
\]
Recall from Corollary \ref{cor:summary_shearlet}, that the main challenge lies in characterizing the elements in the intersection $\mathcal{S}_1(\mathcal{O}) \cap \mathcal{S}_{Co_H}$, i.e. elements of the type \[
A = \left( \begin{array}{cc} 1 & \mathbf{0} \\ \mathbf{0} & B \end{array} \right) 
\]
and that this characterization rests on two conditions:
\begin{itemize}
\item The map $\Psi:\mathfrak{s} \ni \sum_{i=2}^d t_i X_i \mapsto \sum_{i=2}^d s_i X_i, ~s = B^{-T} t$ is an algebra automorphism of $\mathfrak{s}$. Clearly, we can replace $B$ by $B^{-1}$, and will systematically do so in the subsequent arguments. 
\item If $Y = {\rm diag}(1,\lambda_2,\ldots,\lambda_d)$ is the infinitesimal generator of the scaling subgroup, and $\tilde{Y} = {\rm diag}(\lambda_2,\ldots,\lambda_d)$, then $\tilde{Y}$ and $B$ commute.  
\end{itemize}
Here it is relevant to note that the first condition is fulfilled for every choice of $B$: The mapping $\psi$ is linear and bijective by construction. Furthermore, the associative algebra structure of $\mathfrak{s}$ is trivial in the sense that given any two matrices $a_1,a_2 \in \mathfrak{s}$, the product is given by $a_1 a_2 = 0$. Hence any \textit{linear} automorphism $\mathfrak{s} \to \mathfrak{s}$ is automatically an \textit{algebra} automorphism. Therefore the first condition is trivially fulfilled, and we get from Corollary \ref{cor:summary_shearlet} that 
\[
A = \mathcal{S}_{Co_H} \Leftrightarrow A = \lambda \cdot h \cdot \left( \begin{array}{cc} 1 & \mathbf{0} \\ \mathbf{0} & B \end{array} \right) ~,~B \in GL(\mathbb{R}^{d-1})~,~B\tilde{Y}  = \tilde{Y} B  ~.
\]

For the characterization of the second condition, we refer to Remark \ref{rem:char_commutant_Y}. The dimension of $\mathcal{S}_{Co_H}$ is now determined in terms of the eigenvalue multiplicities: If $n_1,\ldots,n_k$ are the multiplicities of the distinct eigenvalues of $\tilde{Y}$, we obtain
\[
\dim(\mathcal{S}_{Co_H}) = d+\sum_{\ell=1}^k n_{\ell}^2~.
\]
Note that any partition of $d-1$, i.e. every tuple $(n_\ell)_{\ell=1,\ldots,k}$ of positive integers satisfying
\[
\sum_{\ell=1}^k n_\ell = d-1
\]
can occur as multiplicities of a suitable choice of $Y$. The extreme cases are given by $\tilde{Y} = \lambda I_{d-1}$, which leads to
\[
 \dim(\mathcal{S}_{Co_H}) = d^2-d+1~,
\] and the multiplicity-free case, resulting in 
    \[ \dim(\mathcal{S}_{Co_H}) = 2d-1~. \]

\subsection*{Coorbit compatible dilations for Toeplitz shearlet dilation groups}

In order to describte the coorbit compatible dilations for the Toeplitz shearlet dilation groups, the main datum required to apply Corollary \ref{cor:summary_shearlet} is the automorphism group of the Lie algebra $\mathfrak{s}$ of the shearing subgroup. 

Recall that the shearing subgroup for the Toeplitz shearlet group consists of the elements
\begin{align*}
T(1, s_1, \ldots, s_{d-1}):=
\begin{pmatrix}
1 & s_1 & s_2 & \ldots & \ldots & s_{d-1} \\
  & 1	& s_1 & s_2 & \ldots & s_{d-2}\\
  && \ddots & \ddots & \ddots & \\
  &&& 1 & s_1 & s_2\\
  &&&& 1 & s_1\\
  &&&&& 1
\end{pmatrix},
\end{align*}
with $s_1,\ldots,s_{d-1} \in \mathbb{R}^{d-1}$. The associated Lie algebra then consists of the matrices

\begin{align*}
T(0, s_1, \ldots, s_{d-1}):=
\begin{pmatrix}
0 & s_1 & s_2 & \ldots & \ldots & s_{d-1} \\
  & 0	& s_1 & s_2 & \ldots & s_{d-2}\\
  && \ddots & \ddots & \ddots & \\
  &&& 0 & s_1 & s_2\\
  &&&& 0 & s_1\\
  &&&&& 0
\end{pmatrix}. 
\end{align*}
The canonical basis of $\mathfrak{s}$ (in the sense of Lemma \ref{lem:CanonicalBasis}) is given by 
\[
X_2 = T(0,1,0,\ldots,0)~,~X_3 = T(0,0,1,0,\ldots,0)~,~\ldots~,~X_d = T(0,\ldots,0,1)~,
\] and it is not hard to verify the relations
\[
\forall j=2,\ldots,d~:~ X_j = X_2^{j-1}~.
\] We proceed as in the standard shearlet case, and first determine the relevant algebra automorphisms $\Psi$ of $\mathfrak{s}$. Clearly a necessary condition for such maps is that $\Psi$ maps the generating element $X_2$ onto another generating element. Furthermore, a general element 
\[
b_2 = \sum_{j=2}^d c_j X_j~,~c_2,\ldots,c_d \in \mathbb{R}
\] is easily seen to be generating iff $c_2 \not= 0$. In such a case, letting
\[
b_j = b_2^{j-1}
\] defines a second basis $b_2,\ldots,b_d$ of $\mathfrak{s}$, and the unique linear map 
\[
\Psi: \mathfrak{s} \to \mathfrak{s}~,~X_j \mapsto b_j~,
\] is readily seen to be an algebra automorphism. 
In short, the map 
\[
{\rm Aut}(\mathfrak{s}) \to \left\{ b = \sum_{j=2,\ldots,d} c_j X_j~:~c_j \in \mathbb{R},c_2 \not= 0 \right\}~,~\Psi \mapsto \Psi(X_2)~
\] is a bijection. As a consequence, ${\rm Aut}(\mathfrak{s})$ is a $d-1$-dimensional matrix group. 

Hence, returning to the factorization 
\[
A = \lambda \cdot h \cdot A_1~,~\lambda \in \mathbb{R}~,~h \in S~,~A_1  =  \left( \begin{array}{cc} 1 & \mathbf{0} \\ \mathbf{0} & B \end{array} \right), 
\] from Corollary \ref{cor:summary_shearlet}, the possible choices for $B$ can be described as 
\[
B = \left( \begin{array}{ccccc} c_2 & c_3 & \ldots & \ldots & c_d \\
0 & c_2^2 &  \ast & \ast & \ast\\
0 & 0 & c_2^3 & \ast & \ast \\
 0 & 0 & 0 & \ddots & \ast \\
 0 & \ldots & \ldots & \ldots & c_2^{d-1} \end{array} \right) ~,~ (c_2,\ldots,c_d)^T \in \mathbb{R}^* \times \mathbb{R}^{d-1}~.
\] Here the entries above the diagonal and below the first line depend uniquely on $c_2,\ldots,c_d$. They can be determined, either recursively or using the multinomial formula, explicitly from $c_2,\ldots,c_d$; however, we refrain from giving a general formula. To give an idea of the general pattern, we consider the cases $d=3,4,5$. For $d=3,$ one gets the general form
\[
B = \left( \begin{array}{cc} c_2 & c_3  \\ 0 & c_2^2  \end{array} \right)~,
\]
for $d=4$ one has 
\[
B = \left( \begin{array}{ccc} c_2 & c_3 & c_4 \\ 0 & c_2^2 & 2c_2 c_3 \\ 0 & 0 & c_2^3 \end{array} \right) ~,
\] and finally in the case $d=5$ the resulting matrices are of the form 
\[
B = \left( \begin{array}{cccc} c_2 & c_3 & c_4 & c_5\\ 0 & c_2^2 & 2c_2 c_3 & 2 c_2c_4 + c_3^2 \\ 0 & 0 & c_2^3 & c_2^2c_3\\ 0 & 0 & 0 & c_2^4\end{array} \right) ~.
\]

Finally, we need to determine the influence of the scaling subgroup. 
Recall from \cite{AlbertiEtAl2017} that the infinitesimal generators $Y$ have the form $Y_\delta = {\rm diag}(1,1-\delta,1-2\delta,\ldots,1-(d-1)\delta)$. The dimension of $\mathcal{S}_{Co_H}$ depends on $\delta$, as follows: 
\begin{enumerate}
    \item If $\delta=0$, then $\tilde{Y} = I_{d-1}$, and we get 
    \[
    \dim(\mathcal{S}_{Co_H}) = 1+d-1+d-1 = 2d-1~.
    \]
    \item In the case $\delta\not= 0$, all eigenvalues of $\tilde{Y}$ have multiplicity 1, hence the set of possible choices for $B$ consists precisely of the algebra automorphisms of $\mathfrak{s}$ that are diagonal over the canonical basis. This fixes $b_2 = c_2 X_2$, leading to $b_j = c_2^{j-1} X_j$, and as $c_2$ runs through the positive real numbers, one obtains a one-parameter matrix group. Thus we obtain in this case
    \[ \dim(\mathcal{S}_{Co_H}) = 1 + d-1 + 1 = d+1~,\]
    which misses the lower dimension bound from Corollary \ref{cor:SCo_Lie} by 1.
\end{enumerate}




\bibliographystyle{abbrv}
\bibliography{References}
\end{document}